\newtheorem{thm}{Theorem}[section]
\newtheorem{prp}[thm]{Proposition}
\newtheorem{lem}[thm]{Lemma}
\newtheorem{dfn}[thm]{Definition}
\newtheorem{cor}[thm]{Corollary}
\newtheorem{ex}{Example}
\newtheorem{rem}{Remark}
\newcommand{ \bm}[1]{\boldsymbol{#1}}
\newcommand{ \image}{\hspace{0.2em}{\rm image}\hspace{0.2em}}
\newcommand{ \map}{{\rm map}\,}
\newcommand{ \Map}{{\rm Map}\,}
\newcommand{ \SU}{S\!U}
\newcommand{ \aut}{{\rm aut}\,}
\begin{document}
\title{Finiteness of $A_n$-equivalence types of gauge groups}
\author{Mitsunobu Tsutaya}
\thanks{This paper is accepted by the Journal of the London Mathematical Society (to appear).}
\maketitle

\begin{abstract}
Let $B$ be a finite CW complex and $G$ a compact connected Lie group.
We show that the number of gauge groups of principal $G$-bundles over $B$ is finite up to $A_n$-equivalence for $n<\infty$.
As an example, we give a lower bound of the number of $A_n$-equivalence types of gauge groups of principal $\SU(2)$-bundles over $S^4$.
\end{abstract}

\section{Introduction}
Let $G$ be a topological group and $P$ be a principal $G$-bundle over a space $B$. 
A $G$-equivariant map $P\rightarrow P$ covering the identity map on $B$ is called an {\it automorphism} of $P$. 
The {\it gauge group} $\mathcal{G}(P)$ of $P$ is the topological group consisting of all automorphisms of $P$.\par
Let us consider the following problem: how many homotopy types of $\mathcal{G}(P)$ do exist for fixed $B$ and $G$? 
For $B=S^4$ and $G=\SU(2)$, Kono \cite{Kon91} has shown that there exist only six homotopy types of $\mathcal{G}(P)$.
More generally, for any finite CW complex $B$ and any compact connected Lie group $G$, Crabb and Sutherland show that that there exist only finitely many homotopy types of $\mathcal{G}(P)$ in \cite{CS00}. 
Moreover, they show that the number of homotopy types of $\mathcal{G}(P)$ as $H$-spaces, namely $H$-types of $\mathcal{G}(P)$ is also finite.
\par
Stasheff considered the concept of an $A_n$-map \cite{Sta63b} between topological monoids.
An $H$-map between topological monoids is exactly an $A_2$-map between them.
Here, we can consider the more general problem: how many $A_n${\it -equivalence types}, which we also refer as $A_n${\it -types}, of gauge groups do exist for fixed $B$ and $G$? 
Especially, let us consider this problem for any finite complex $B$ and any compact connected Lie group $G$ in case when $n$ is finite.
For $n=1$ or $2$, as stated above, Crabb and Sutherland show that it is finite.
We will give a more general result for $n\geq 3$.
\begin{thm}\label{mainthm}
Let $B$ be a finite complex and $G$ be a compact connected Lie group. 
As $P$ ranges over all principal $G$-bundles with base $B$, the number of $A_n$-equivalence types of $\mathcal{G}(P)$ is finite in case when $n$ is finite.
\end{thm}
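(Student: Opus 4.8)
The plan is to translate the problem into one about $n$-th projective spaces and then to push through a localisation argument in the spirit of Crabb--Sutherland \cite{CS00}. For a group-like $A_n$-space $X$ write $B_nX$ for its $n$-th Stasheff projective space, that is, the $n$-skeleton of the bar construction $BX$. Recall first the classical identification (Gottlieb, Atiyah--Bott): if $P$ is the principal $G$-bundle classified by $f\colon B\to BG$, then $\mathcal{G}(P)$ is homotopy equivalent, as a topological group, to the section space $\Gamma(B;\mathrm{Ad}(P))$ of the adjoint bundle of groups $\mathrm{Ad}(P)=P\times_GG$ (conjugation action), so that $B\mathcal{G}(P)\simeq\mathrm{Map}_f(B,BG)$. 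Applying the bar construction fibrewise to $\mathrm{Ad}(P)$ gives, for each finite $n$, a fibre bundle $\mathrm{Ad}_n(P)=P\times_G B_n(G)\to B$ with fibre $B_n(G)$; and since $B$ is compact, taking sections commutes up to homotopy with the fibrewise bar construction, so
\[
B_n(\mathcal{G}(P))\ \simeq\ \Gamma(B;\mathrm{Ad}_n(P)).
\]
Finally recall that two group-like $A_n$-spaces are $A_n$-equivalent if and only if their $n$-th projective spaces have the same homotopy type. Hence it is enough to prove that, as $P$ ranges over all principal $G$-bundles over $B$, only finitely many homotopy types occur among the section spaces $\Gamma(B;\mathrm{Ad}_n(P))$.

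The hypothesis $n<\infty$ enters at exactly one point: $B_n(G)$ is then a \emph{finite} CW complex, being built out of the compact manifolds $G^{q}$ with $q\le n$, whereas $B_\infty(G)=BG$ is infinite dimensional. In particular $B_n(G)$ is simply connected (since $G$ is connected), has finitely generated homology and homotopy groups, and the identity component $\mathrm{aut}_1(B_n(G))$ of its monoid of self-homotopy-equivalences has finitely generated homotopy groups. The bundle $\mathrm{Ad}_n(P)$ is classified by $B\xrightarrow{\,f\,}BG\xrightarrow{\,B\rho_n\,}B\,\mathrm{aut}_1(B_n(G))$, where $\rho_n\colon G\to\mathrm{aut}_1(B_n(G))$ is induced by the conjugation action of $G$ on itself (it lands in the identity component because $G$ is connected).

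The core of the argument is to show that only finitely many fibre homotopy equivalence classes of $\mathrm{Ad}_n(P)$ arise, or equivalently that the image of the map $[B,BG]\to[B,B\,\mathrm{aut}_1(B_n(G))]$ induced by $B\rho_n$ is finite. One argues by localisation. Over $\mathbb{Q}$: because $\rho_n$ is built from \emph{inner} automorphisms of $G$, and the conjugation action of $G$ on itself is rationally trivial, $\rho_n$ is rationally nullhomotopic, so $\mathrm{Ad}_n(P)$ is rationally a product bundle, independent of $P$. At each prime $p$: since $B_n(G)$ is a finite complex, $\rho_n$ is $p$-locally null for all but finitely many $p$; for each of the finitely many remaining primes one makes a $p$-local count --- using that $B$ is finite, that the $p$-local homotopy of $\mathrm{aut}_1(B_n(G))$ is finitely generated, and the quasi-periodic behaviour of the relevant $p$-local gauge-theoretic invariants (compare Kono \cite{Kon91}, Crabb--Sutherland \cite{CS00}) --- to conclude that only finitely many $p$-local fibre homotopy types of $\mathrm{Ad}_n(P)$ occur. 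Reassembling by the arithmetic fracture square gives finitely many integral fibre homotopy types of $\mathrm{Ad}_n(P)$; as each such type determines $\Gamma(B;\mathrm{Ad}_n(P))$ up to homotopy, only finitely many homotopy types of $\Gamma(B;\mathrm{Ad}_n(P))\simeq B_n(\mathcal{G}(P))$ remain, as required.

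The step I expect to be the main obstacle is precisely this finiteness of the set of fibre homotopy types of $\mathrm{Ad}_n(P)$. For $n=\infty$ the corresponding statement is comparatively soft --- and is essentially Crabb--Sutherland's --- because $BG$ is rationally a product of even Eilenberg--MacLane spaces, so $\mathrm{aut}_1(BG)$ is rationally minuscule. By contrast $B_n(G)$ is in general \emph{rationally hyperbolic}, so $\mathrm{aut}_1(B_n(G))$ has infinitely many non-vanishing rational homotopy groups; one genuinely has to use that $\rho_n$ comes from inner automorphisms in order to annihilate the rational part of the twisting (and almost all of its $p$-local parts), and then to control the finitely many surviving $p$-local ambiguities by hand. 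The case $n\le 2$ of the theorem recovers the finiteness of homotopy types and of $H$-types of $\mathcal{G}(P)$ due to Crabb--Sutherland.
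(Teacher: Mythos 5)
Your reduction rests on two stated "recollections" that are not true for finite $n$, and both are essential to the argument.

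\textbf{First gap.} The claimed identification $B_n(\mathcal{G}(P))\simeq\Gamma(B;\mathrm{Ad}_n(P))$ holds for $n=\infty$ (this is the Atiyah--Bott/Gottlieb equivalence $B\mathcal{G}(P)\simeq\mathrm{Map}_f(B,BG)$), but it fails for finite $n$: the fibrewise $n$-th projective space does \emph{not} commute with taking sections, regardless of compactness of $B$. Already for the trivial bundle and $n=1$, the claim reads $\varSigma\bigl(\mathrm{Map}(B,G)\bigr)\simeq\mathrm{Map}(B,\varSigma G)$, which is false. Concretely, for $B=S^1$, $G=\SU(2)$, the left-hand side is $\varSigma(\varLambda S^3)$, which is $3$-connected, whereas the right-hand side $\varLambda S^4$ has $\pi_3\cong\pi_4(S^4)\cong\bm{Z}/2$. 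The special phenomenon at $n=\infty$ is that $BG$ is a classifying space; there is no analogue at the $n$-th stage.

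\textbf{Second gap.} The statement that two group-like $A_n$-spaces are $A_n$-equivalent if and only if their $n$-th projective spaces have the same homotopy type is not Stasheff's theorem. Stasheff's correspondence is between $A_n$-maps and maps of projective spaces that \emph{respect the filtration} $B_1\subset B_2\subset\cdots\subset B_n$, with the underlying map of $A_n$-spaces read off from the restriction to $B_1=\varSigma X$. An unfiltered homotopy equivalence $B_nX\simeq B_nY$ need not come from any $A_n$-map, so finitely many (unfiltered) homotopy types of $B_n$ would not by itself bound the number of $A_n$-equivalence types. Only at $n=\infty$ does the homotopy type of the classifying space recover the $A_\infty$-type, because one can loop back.

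Because of these two gaps your reduction to ``finitely many fibre homotopy types of $\mathrm{Ad}_n(P)=P\times_G B_n(G)$'' does not prove the theorem. The paper sidesteps both problems by working directly with fibrewise $A_n$-structures: it constructs a classifying space $M_n(G)$ for fibrewise $A_n$-spaces with fibre $A_n$-equivalent to $G$ (\S 5), identifies $\mathcal{G}(P)$ with $\varGamma(\aut P)$ and notes that a fibrewise $A_n$-equivalence induces an $A_n$-equivalence of section spaces (\S 8), proves that the fibrewise rationalisation of $\aut P$ is trivial so that the composite $B\to M_n(G)\to M_n(G_{(0)})$ is null-homotopic (\S 7), and finally uses the algebraic finiteness lemma (Lemma \ref{diagram}) together with the tower $M_n(G)\to\cdots\to M_2(G)\to BFG$ and Hilton--Mislin--Roitberg localisation to deduce that $[B;M_n(G)]^*\to[B;M_n(G_{(0)})]^*$ is finite-to-one (Proposition \ref{finone}). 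Your intuition about the rational triviality of the adjoint twist is sound and matches the paper's Theorem \ref{triviality}, and your observation that the content at the odd primes is controlled by finite generation and localisation also matches; but the reduction to homotopy types of $B_n$ must be replaced by a genuine classification of fibrewise $A_n$-structures, as the paper does.
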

\par
In general, the number of $A_\infty$-types of $\mathcal{G}(P)$ is not finite (\S 9).
\par
If we regard $G$ as a left $G$-space by the adjoint action $(g,x)\mapsto gxg^{-1}$, the bundle $\aut P=P\times_G G$ with fibre $G$ associated to $P$ is a fibrewise topological group, namely, a group object in the comma category $\bm{Top}\downarrow B$ and the space $\varGamma (\aut P)$ of all sections of $\aut P$ and $\mathcal{G}(P)$ are isomorphic as topological groups.
So to prove Theorem \ref{mainthm}, it is sufficient to show that the number of {\it fibrewise $A_n$-equivalence types} of $\aut P$ is finite.
\par
In \S 2, we review the terminology of fibrewise homotopy theory.
In \S 3, we review associahedra and multiplihedra. 
In \S 4, we review the definitions of {\it fibrewise $A_n$-spaces} and {\it fibrewise $A_n$-maps} and see some fundamental properties. 
These notions are the fibrewise versions of $A_n$-spaces \cite{Sta63a} and $A_n$-maps \cite{IM89} respectively. 
In \S 5, we show the classification theorem for fibrewise $A_n$-spaces with fibre $A_n$-equivalent to some fixed $A_n$-space.
In \S 6 and 7, we treat the fibrewise localization of fibrewise $A_n$-spaces.
We see that the fibrewise rationalizations of automorphism bundles are trivial. 
In \S 8, we complete the proof of Theorem \ref{mainthm}.
In \S 9, we show the existence of a counterexample to Theorem \ref{mainthm}, if we disregard the condition ``$n$ is finite'' in Theorem \ref{mainthm}.
In \S 10, we make a new observation of the gauge groups of principal $\SU(2)$-bundles over $S^4$.
Especially, we give a lower bound of the number of the $A_n$-types of such gauge groups.
\par
I am grateful to Professors Akira Kono and Daisuke Kishimoto for suggesting this problem and for many helpful discussions.

\section{Fibrewise spaces and fibrewise pointed spaces}
We follow the terminology introduced in \cite{CJ98} used in describing the fibrewise homotopy theory.
\par
Let $E$ and $B$ be spaces.
We say $E$ is a {\it fibrewise space} over $B$ if a map $\pi:E\rightarrow B$, called the {\it projection} of $E$, is given.
For each $b\in B$, we denote $E_b:=\pi^{-1}(b)$ and call $E_b$ the {\it fibre} over $b$. 
The space $B$ itself is regarded as a fibrewise space over $B$ with projection given by the identity map. 
Every space can be seen as a fibrewise space over a point with the unique projection.
\par
{\bf From now on, we always assume that all fibrewise spaces are Hurewicz fibrations.}
\par 
Let $E\stackrel{\pi}{\rightarrow}B$ and $E'\stackrel{\pi'}{\rightarrow}B$ be fibrewise spaces. 
If a map $f:E\rightarrow E'$ satisfies $\pi'f=\pi$, then $f$ is called a {\it fibrewise map} over $B$. 
We denote $I$ as the unit interval $[0,1]$. 
Regard $I\times E$ as a fibrewise space over $B$ with projection given by composing $\pi$ with the second projection $I\times E\rightarrow E$. 
Let $f,g:E\rightarrow E'$ be fibrewise maps. 
A fibrewise map $h:I\times E\rightarrow E'$ is called a {\it fibrewise homotopy} between $f$ and $g$ if $h|_{0\times E}=f$ and $h|_{1\times E}=g$. 
If there exists such $h$, then $f$ and $g$ are said to be {\it fibrewise homotopic}.
\par
For fibrewise spaces $E\stackrel{\pi}{\rightarrow}B$ and $E'\stackrel{\pi'}{\rightarrow}B$, the fibre product $E\times_BE'$ of $E$ and $E'$ is defined by
\begin{align*}
E\times_B E'=\{ \, (e,e')\in E\times E'\, |\, \pi(e)=\pi'(e') \, \} .
\end{align*}
We denote the $i$-fold fibre product of $E$ by $E^{\times_Bi}$.
\par
{\it Fibrewise mapping space} $\map_B(E,E')$ between fibrewise spaces $E$ and $E'$ over the same base $B$ is the following set with appropriate topology:
\begin{align*}
\map_B(E,E')=\coprod_{b\in B}\Map(E_b,E_b'),
\end{align*}
where $\Map (E_b,E_b')$ is the set of all continuous maps $E_b\rightarrow E_b'$.
This space is naturally a fibrewise space over $B$.
\par
{\bf In the remainder of this paper, we work in the category of fibrewise compactly-generated spaces, which is introduced in \cite{Jam95}.}
For a fibrewise space $E$ over $B$, if $E$ and $B$ are compactly-generated, then $E$ is fibrewise compactly-generated.
Let $E$, $E'$ and $E''$ be fibrewise spaces over $B$ and $f:E\times_BE'\rightarrow E''$ be a function with $\pi_{E''}f=\pi_{E\times_BE'}$, where $\pi_{E''}:E''\to B$ and $\pi_{E\times _BE'}:E\times_BE'\to B$ are the projections.
Then we define the function $f':E\rightarrow \map_B(E',E'')$ by $f'(x)(y)=f(x,y)$.
\begin{prp}[(Proposition(5.6) in \cite{Jam95})]
Assume $E$ and $E'$ are locally sliceable \cite{CJ98}.
Then $f$ is continuous if and only if $f'$ is continuous.
\end{prp}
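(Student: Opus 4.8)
The plan is to localise the statement over $B$ and then deduce it from the ordinary exponential law in the category of compactly-generated spaces, the point being that the sections provided by sliceability let one pass back and forth between the fibrewise and the ordinary mapping spaces. The first step is a reduction to the case in which $E$ and $E'$ are \emph{sliceable} over $B$, i.e.\ admit a section through each of their points. Both the hypothesis and the conclusion are local over $B$: continuity of a fibrewise map may be tested over the members of any open cover of $B$, the fibre product restricts as $(E\times_B E')|_U = E|_U\times_U E'|_U$, and the fibrewise compact-open topology restricts as $\map_B(E',E'')|_U = \map_U(E'|_U, E''|_U)$. Since $E$ and $E'$ are locally sliceable, there is a common open cover $\{U_\alpha\}$ of $B$ over which both restrictions are sliceable, and it is enough to prove the equivalence with $B$ replaced by each $U_\alpha$; so from here on $E$ and $E'$ are assumed sliceable over $B$.

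Next I would treat the implication ``$f'$ continuous $\Rightarrow f$ continuous'' by showing that the fibrewise evaluation map $e\colon \map_B(E',E'')\times_B E'\to E''$, $e(\phi,y)=\phi(y)$, is continuous; then $f=e\circ(f'\times_B\mathrm{id}_{E'})$ is a composite of continuous maps, since $f'\times_B\mathrm{id}_{E'}$ is continuous whenever $f'$ is and $e(f'(x),y)=f'(x)(y)=f(x,y)$. To see that $e$ is continuous, take $(\phi_0,y_0)$ lying over $b$ with $\phi_0(y_0)\in W$ for an open $W\subseteq E''$; sliceability of $E'$ gives a section $s$ with $s(b)=y_0$, and combining this with the continuity of $\phi_0$ and the Hurewicz fibration property of $E'\to B$ one produces a fibrewise-compact neighbourhood of $y_0$ carried into $W$ by $\phi_0$, which together with $W$ names a subbasic neighbourhood of $\phi_0$ in the fibrewise compact-open topology. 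This is just the fibrewise form of the classical argument that evaluation out of a mapping space is continuous, the compactly-generated framework of \cite{Jam95} removing any need for local compactness of the fibres.

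For the converse, ``$f$ continuous $\Rightarrow f'$ continuous'', I would verify continuity of $f'\colon E\to\map_B(E',E'')$ by probing with continuous maps $u\colon T\to E$ from compact Hausdorff spaces (as $E$ is compactly generated): regarding $T$ as a space over $B$ via $\pi u$, the adjoint of $f'u$ is $f\circ(u\times_B\mathrm{id}_{E'})\colon T\times_B E'\to E''$, which is continuous, and sliceability of $E$ and $E'$ lets one describe the fibrewise mapping space over small base sets in terms of ordinary mapping spaces, so that the classical compactly-generated exponential law applies patch by patch, the pieces being glued over $T$. The main obstacles are precisely the two places where sliceability is genuinely needed — the continuity of the fibrewise evaluation map above, and, in the converse, the comparison of the fibrewise compact-open topology on $\map_B(E',E'')$ with ordinary mapping-space topologies over coordinate patches together with the compatibility of these local descriptions; once the problem has been localised so that $E$ and $E'$ are sliceable, however, both amount to careful bookkeeping rather than to any new idea.
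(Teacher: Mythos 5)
The paper does not actually prove this statement; it is quoted verbatim as Proposition~(5.6) of James's \emph{Fibrewise compactly-generated spaces} \cite{Jam95}, and the author relies on James's proof. So there is no internal argument to compare against; I can only assess your sketch on its own terms.

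Your overall strategy — localise over the base so that local sections of $E$ and $E'$ let you translate the fibrewise mapping space and fibrewise evaluation into the ordinary compactly-generated exponential law over coordinate patches, then glue — is the right one, and is in the spirit of how James sets up fibrewise compactly-generated topology. However, there is a genuine gap in the very first reduction. Local sliceability of $E$ says only that each point $e\in E$ has \emph{some} neighbourhood $U_e\subseteq B$ of $\pi(e)$ and a section $U_e\to E$ through $e$; the size of $U_e$ depends on $e$, and there is no reason for the fibrewise space $E|_U$ to become (globally) sliceable over any open $U$, since points of $E|_U$ lying over the boundary region of $U$ may only admit sections on strictly smaller sets. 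So the claim that ``there is a common open cover $\{U_\alpha\}$ of $B$ over which both restrictions are sliceable'' is not justified, and the clean reduction ``from here on $E$ and $E'$ are assumed sliceable'' does not go through as stated. The fix is not to try to globalise sliceability over a cover of $B$ at all, but to exploit that continuity is a pointwise-local condition on the \emph{total space}: when checking that $f$ is continuous at a particular $(e,e')\in E\times_B E'$, or that $f'$ is continuous at a particular $e\in E$, invoke the local sections through $e$ and $e'$ (defined on neighbourhoods depending on those points), and run the comparison with ordinary mapping spaces there. With that adjustment the rest of your outline — continuity of the fibrewise evaluation map to get $f=e\circ(f'\times_B\mathrm{id}_{E'})$, and probing $f'$ by continuous maps out of compact Hausdorff spaces together with the compactly-generated exponential law — is the intended argument, though the precise description of the subbase for James's topology on $\map_B(E',E'')$ and the verification that the local identifications are compatible on overlaps are exactly the bookkeeping that must be carried out carefully.
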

\par 
If $E\rightarrow B$ is a Hurewicz fibration and $B$ is a CW complex, then $E$ is locally sliceable.
This criterion is sufficient for our later use.
\par
A {\it fibrewise pointed space} $E$ over $B$ is a fibrewise space $E\stackrel{\pi}{\rightarrow}B$ with a section $\sigma$ of $\pi$.
Here, each fibre $E_b$ is regarded as a pointed space with basepoint $\sigma(b)$. 
The fibrewise space $B$ over $B$ is regarded as a fibrewise pointed space with the section given by the identity map. 
Every pointed space is a fibrewise pointed space over a point.
\par
Let $B\stackrel{\sigma}{\rightarrow}E\stackrel{\pi}{\rightarrow}B$ and $B\stackrel{\sigma'}{\rightarrow}E'\stackrel{\pi'}{\rightarrow}B$ be fibrewise pointed spaces. 
A fibrewise map $f:E\rightarrow E'$ is a {\it fibrewise pointed map} if $f\sigma=\sigma'$. 
Moreover, if $f$ is a homeomorphism, we say $f$ is a {\it fibrewise pointed topological equivalence}.

\section{Review of associahedra and multiplihedra}
We will review and construct associahedra and multiplihedra using the $W$-construction in \cite{BV73} since we will use the result of Boardman and Vogt in \cite{BV73}. 
These constructions are equivalent to the original construction in \cite{Sta63a} respectively in \cite{IM89}.
This fact is also stated in \cite{For08}.
Here we remark that we do not consider the degeneracy maps of associahedra and multiplihedra.
Because we do not need to assume that an $A_n$-form has a strict unit.
Details of this will be explained in the next section.
\par
First, we recall basic definitions about trees.
\begin{dfn}
A planted plane tree (see \S 2 of \cite{Kla70}) $\tau$ is said to be an unpainted tree if each vertex in $\tau$ is not connected to exactly two edges. 
For a planted plane tree $\tau$, let $V(\tau)$ be the set of all vertices in $\tau$ and define the subset
\begin{align*}
L(\tau)=\{\,v\in V(\tau)\,|\, v \,{\rm is\, not\, the\, root\, and\, is\, connected\, to\, only\, one\, edge.}\,\}.
\end{align*}
A vertex contained in $L(\tau)$ is called a {\it leaf} and a vertex which is not the root or a leaf is called an {\it internal vertex}.
Similarly, the edge whose boundary contains the root is called the {\it root edge}, an edge whose boundary contains a leaf is called a {\it leaf edge} and an edge which is not the root edge or a leaf edge is called an {\it internal edge}.
Moreover, if each internal vertex in an unpainted tree $\tau$ is connected to just three edges, $\tau$ is said to be {\it binary}.
\end{dfn}
For a planted plane tree $\tau$, we give each edge the direction toward the root. 
Then, each internal vertex in $\tau$ has some incoming edges and the unique outgoing edge.
\begin{center}
\includegraphics[height=3cm,keepaspectratio,clip]{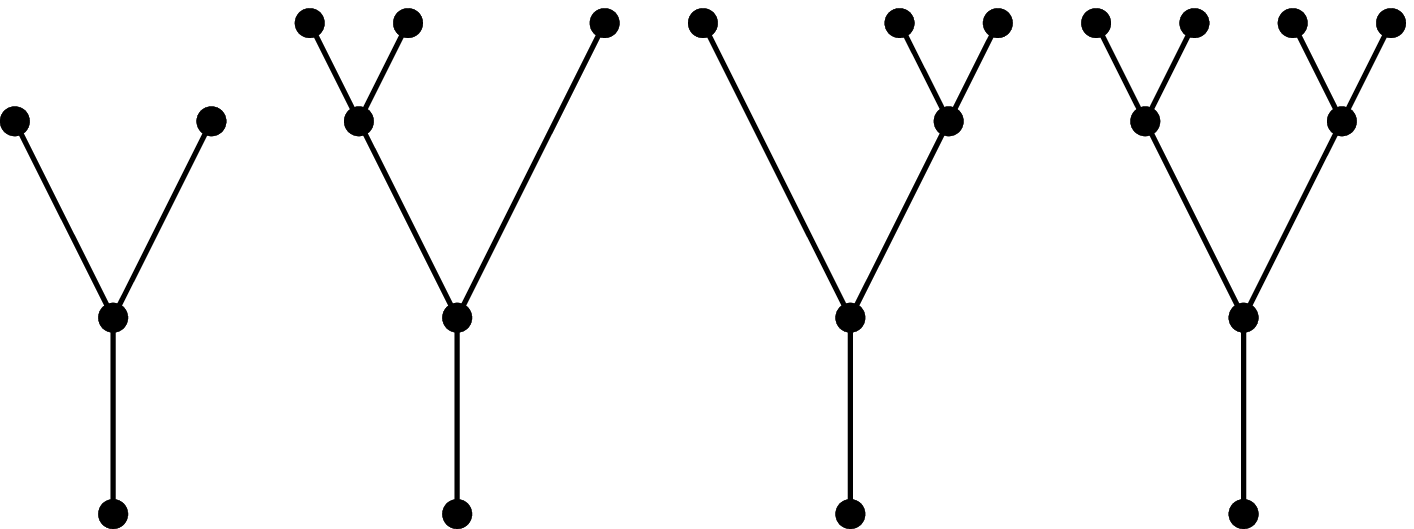}
\end{center}
\begin{rem}
For an integer $n\geq 2$, a binary unpainted tree with $n$ leaves has just $n-2$ internal edges.
\end{rem}
\begin{dfn}
Let $\tau$ be a planted plane tree.
If each internal edge of $\tau$ is labeled by an element of $I=[0,1]$, $\tau$ is said to be a {\it metric tree}.
\end{dfn}
\begin{center}
\includegraphics[height=3cm,keepaspectratio,clip]{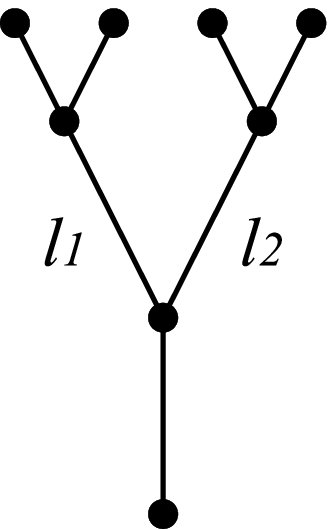}
\end{center}
\par
Denote the set of all binary unpainted trees with $n$ leaves by $T_n$.
The space $T\mathcal{A}_n$ consisting of all binary unpainted metric trees with $n$ leaves is topologized by $T\mathcal{A}_n=T_n\times I^{n-2}$.
Now we define an equivalence relation in $T\mathcal{A}_n$.
For $\rho ,\tau \in T\mathcal{A}_n$, remove all internal edges with length 0 and unite the end vertices of each removed edges.
\begin{center}
\includegraphics[height=3cm,keepaspectratio,clip]{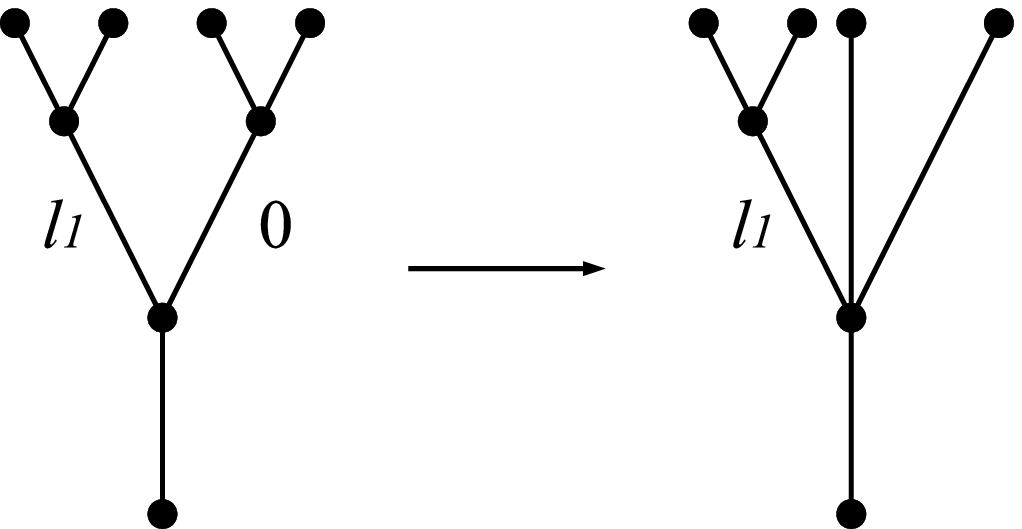}
\end{center}
Then we have the new (in general, not binary) unpainted metric trees $R(\rho)$, $R(\tau)$.
We say $\rho$ and $\tau$ are equivalent if $R(\rho)=R(\tau)$.
This relation defines the quotient space $\mathcal{K}_n$ of $T\mathcal{A}_n$.
For example, $\mathcal{K}_2$ is a point, $\mathcal{K}_3$ is a line segment and $\mathcal{K}_4$ is a pentagon.
\par
Let us define the grafting map.
For $\rho \in T\mathcal{A}_r$, $\tau \in T\mathcal{A}_t$ and an integer $1\leq k\leq r$, we can make the new binary unpainted metric tree $\partial_k(r,t)(\rho, \tau)$ by identifying the root edge of $\tau$ and the $k$-th leaf edge of $\rho$, where this identification is compatible with the direction of edges and the length of the new internal edge is $1\in I$.
\begin{center}
\includegraphics[height=4cm,keepaspectratio,clip]{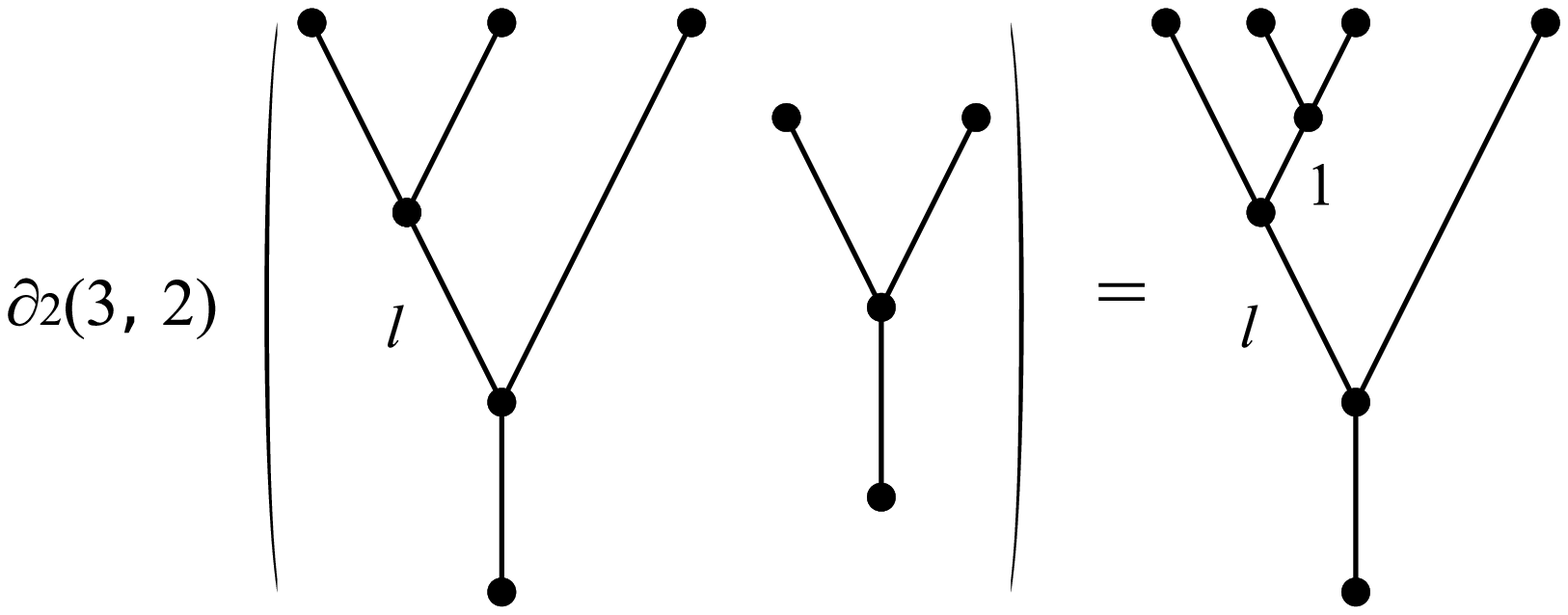}
\end{center}
This defines a continuous map $\partial_k(r,s):\mathcal{K}_r\times \mathcal{K}_t\rightarrow \mathcal{K}_{r+t-1}$.
These grafting maps satisfy the following conditions:
\begin{align*}
&\partial_j (p,r+t-1)(1\times \partial_k(r,t))=\partial_{j+k-1}(p+r-1,t)(\partial_j(p,r)\times 1),\\
&\partial_{j+r-1}(p+r-1,t)(\partial_k(p,r)\times 1)=\partial_k(p+t-1,r)(\partial_j(q,t)\times 1)(1\times T)\tag*{for $k<j$,}
\end{align*}
where $T$ transposes the factors.
Let $\mathcal{L}_n$ be the union of images of these grafting maps in $\mathcal{K}_n$.
Then $\mathcal{K}_n$ is homeomorphic to the cone $C\mathcal{L}_n$ of $\mathcal{L}_n$.
Therefore, these spaces are Stasheff's associahedra in \cite{Sta63a}.
This implies that there is a homeomorphism $(\mathcal{K}_n,\mathcal{L}_n)\simeq (D^{n-2},S^{n-3})$ for $n\geq 3$.
\par
In the above construction, if we replace unpainted metric trees by {\it painted metric trees}, then we can construct multiplihedra.
We consider two types of edges of painted trees, say {\it unpainted edges} and {\it painted edges}.
\begin{dfn}
A {\it painted tree} is a planted plane tree with edges labeled by the set $\{\,{\rm painted,\, unpainted}\, \}$ satisfying the following conditions:\\
(i) each of its internal vertices is one of the following types:\\
 Type I: all incoming edges and the outgoing edge are unpainted,\\
 Type II: all incoming edges and the outgoing edge are painted,\\
 Type III: all incoming edges are unpainted and the outgoing edge is painted,\\
where the number of incoming edges of a vertex of type I or II is greater than 1,\\
(ii) all leaf edges are unpainted while the root edge is painted.
\begin{center}
\includegraphics[height=1.6cm,keepaspectratio,clip]{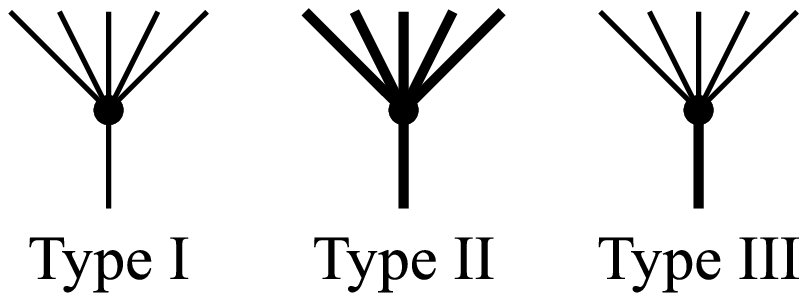}
\end{center}
\par
A painted tree is called {\it binary} if the number of incoming edges of each internal vertex of type I or II is 2 and the number of incoming edges of each internal vertex of type III is 1.
\begin{center}
\includegraphics[height=1.6cm,keepaspectratio,clip]{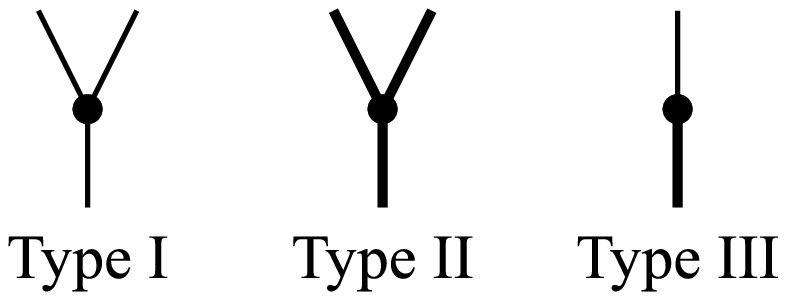}
\end{center}
\end{dfn}
\par
For painted metric trees, we consider the following two types of grafting maps.
\par
Like unpainted metric trees, we can graft an unpainted metric tree onto a painted metric tree.
For a painted metric tree $\rho$ with $r$ leaves, an unpainted metric tree $\tau$ with $t$ leaves and an integer $1\leq k\leq r$, we can make the new painted metric tree $\delta_k(r,t)(\rho, \tau)$ by identifying the root edge of $\tau$ and the $k$-th leaf edge of $\rho$, where this identification is compatible with the direction of edges, the length of the new internal edge is $1\in I$ and each edge in $\delta_k(r,t)(\rho, \tau)$ which comes from $\tau$ is unpainted.
\par
Conversely, we can graft painted metric trees onto an unpainted metric tree.
For an unpainted metric tree $\tau$ with $t$ leaves and painted metric trees $\rho_1,\cdots ,\rho_t$ such that each $\rho_i$ has $r_i$ leaves, $\delta(t,r_1,\cdots ,r_t)(\tau ,\rho_1,\cdots ,\rho_t)$ is the painted metric tree constructed by identifying the root edge of $\rho_i$ and the $i$-th leaf edge of $\tau$ for each $i$, where this identification is compatible with the direction of edges, the lengths of the new internal edges are $1\in I$ and each edge which comes from $\tau$ is painted.
\begin{center}
\includegraphics[height=3cm,keepaspectratio,clip]{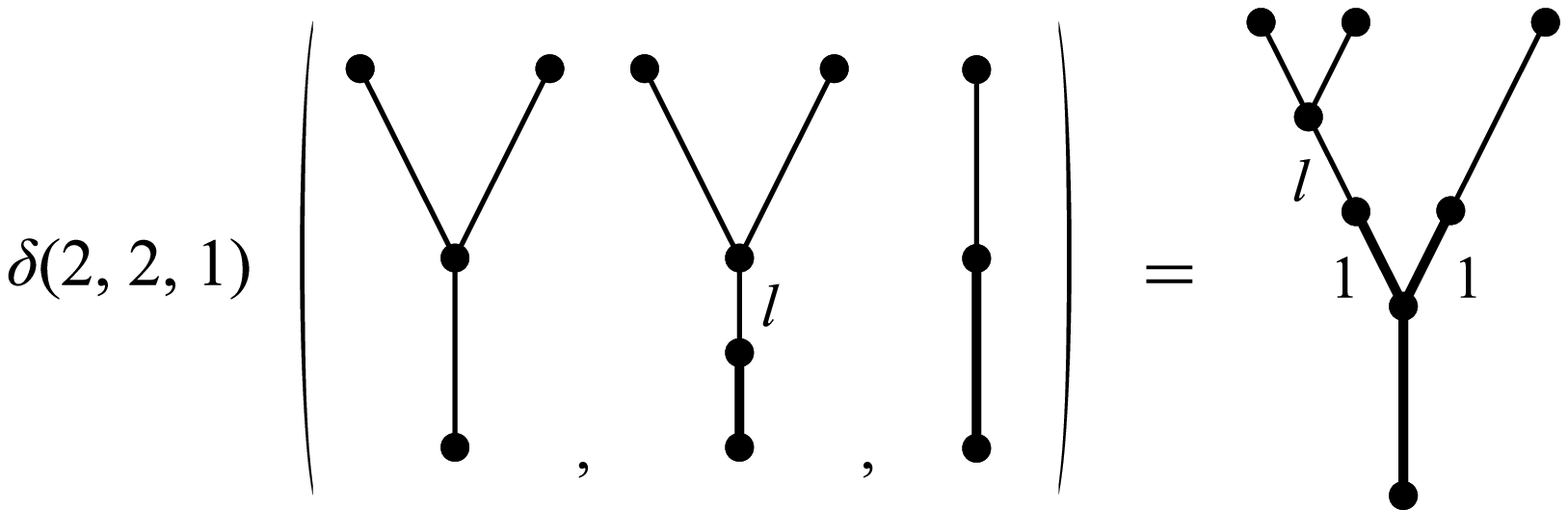}
\end{center}
\par
It is not quite appropriate for our purpose to consider all painted metric trees, so we consider a new class of painted metric trees.
For an painted metric tree $\tau$, let $M(\tau)$ be the length of the longest internal edge in $\tau$.
If $\tau$ has no internal vertices, then define $M(\tau)=0$.
\begin{dfn}
{\it Level-trees} are painted metric trees inductively defined as follows:\\
(I) If a painted metric tree $\tau$ satisfies the condition $M(\tau)=0$, then $\tau$ is a level-tree.\\
(II) Let $\tau$ be a painted metric tree with $n$ internal edges such that $M(\tau)>0$.
Define a new painted metric tree $\tilde \tau$ as follows:\\
(i) $\tilde \tau =\tau$ as painted trees,\\
(ii) for each internal edge $e\subset \tilde \tau$, if the length of $e$ in $\tau$ is $\ell \in I$, then the length of $e$ in $\tilde \tau$ is $\ell /M(\tau) \in I$.\\
If $\tilde \tau =\delta_k(r,s)(\rho,\sigma)$ for some level-tree $\rho$ and unpainted metric tree $\sigma$ or $\tilde \tau =\delta(r,s_1,\cdots ,s_r)(\rho' ,\sigma'_1,\cdots ,\sigma'_r)$ for some unpainted metric tree $\rho$ and level-trees $\sigma_1,\cdots ,\sigma_r$, then $\tau$ is a level-tree.
Here we remark that the number of internal edges in each of such level-trees $\rho,\sigma'_1,\cdots ,\sigma'_r$ is less than $n$ and can define level-trees inductively.
\end{dfn}
\begin{ex}
Consider painted metric trees $\tau_1,\tau_2,\tau_3$ as the following figure.
\begin{center}
\includegraphics[height=3.5cm,keepaspectratio,clip]{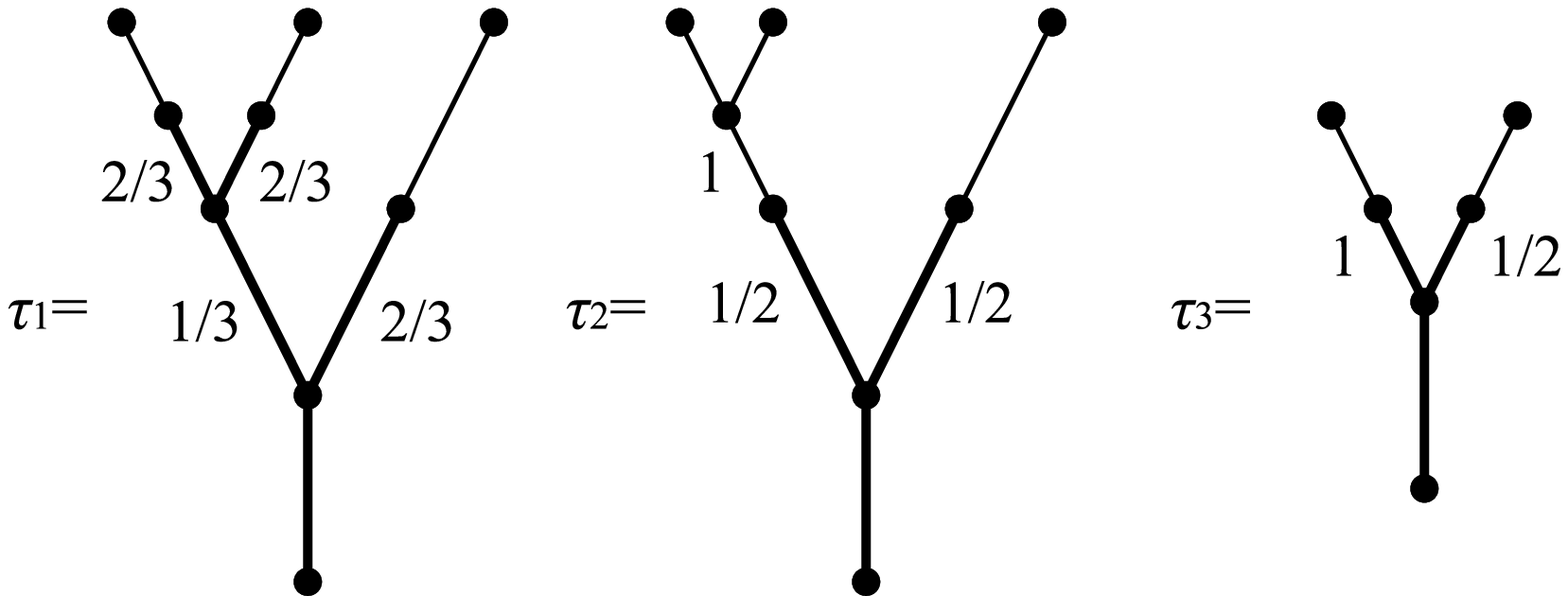}
\end{center}
(i)
Since $M(\tau_1)=2/3$, $\tilde \tau_1$ is described as the following figure.
\begin{center}
\includegraphics[height=3.5cm,keepaspectratio,clip]{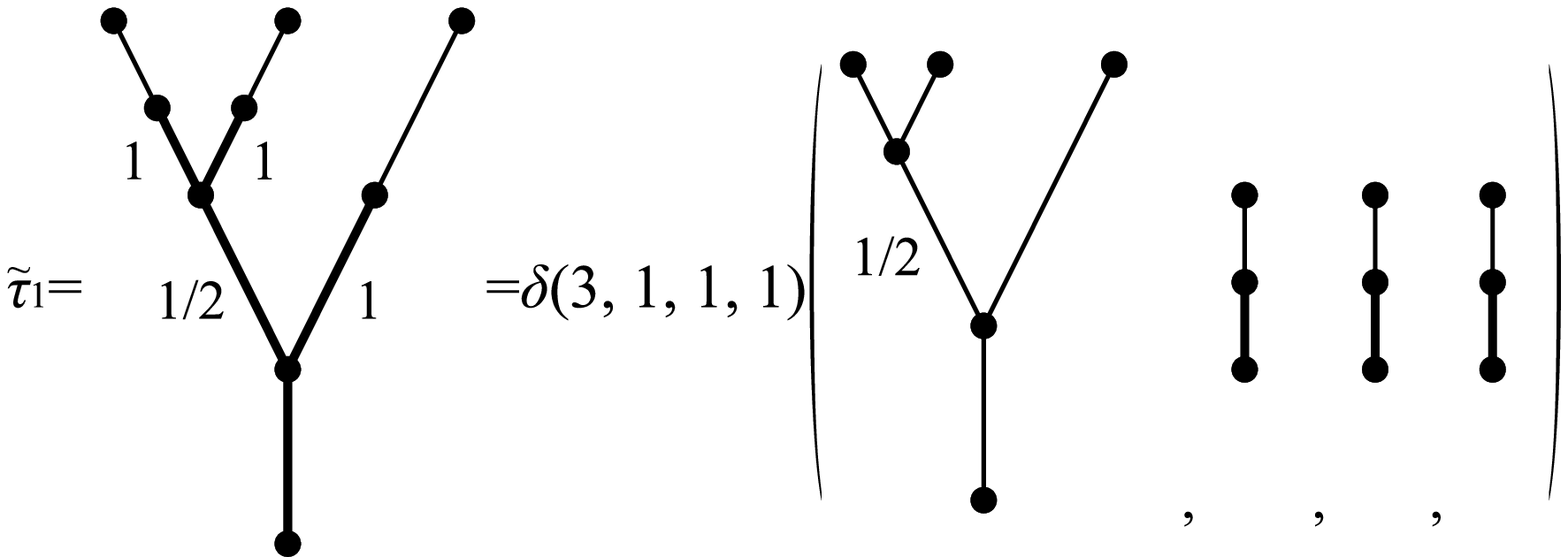}
\end{center}
Hence $\tau_1$ is a level-tree.\\
(ii)
Since $M(\tau_2)=1$, $\tilde \tau_2=\tau_2$ and $\tilde \tau_2$ is decomposed as follows.
\begin{center}
\includegraphics[height=3.5cm,keepaspectratio,clip]{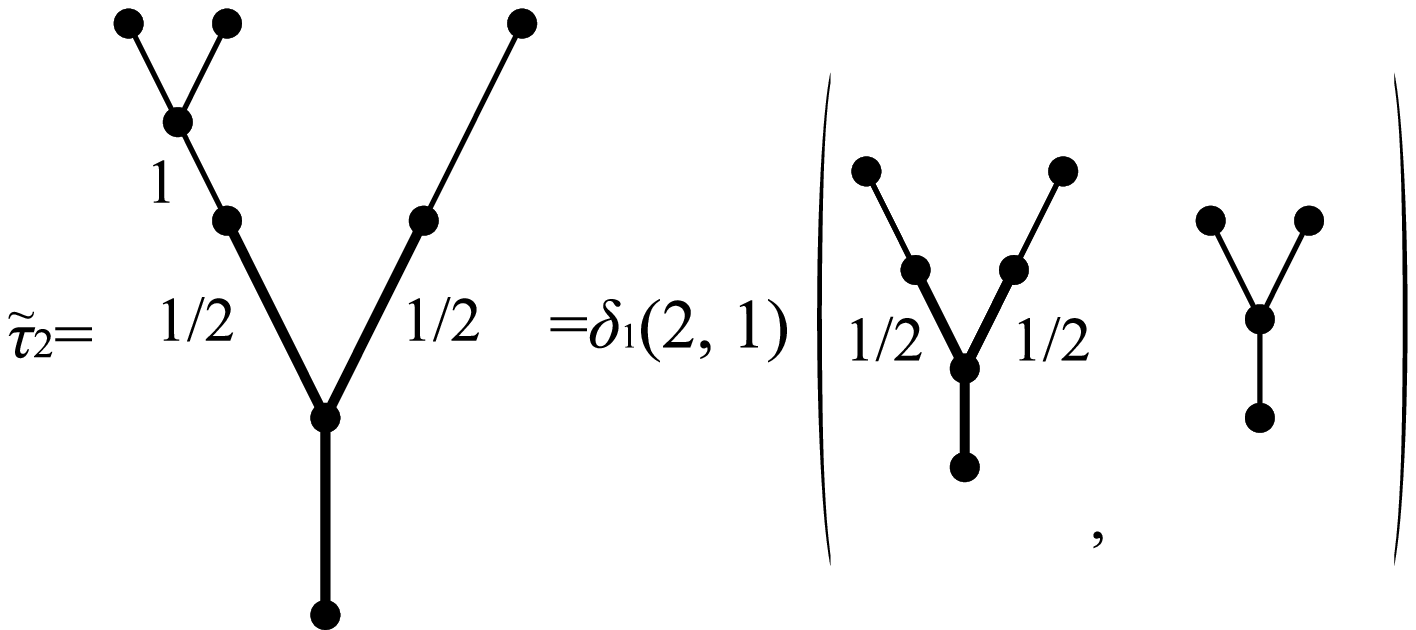}
\end{center}
As easily checked, the painted metric tree in the right hand side is a level-tree.
Then $\tau_2$ is also a level-tree.\\
(iii)
Since $M(\tau_3)=1$, $\tilde \tau_3=\tau_3$. 
The painted metric tree $\tilde \tau_3$, however, can not be described as any grafting of some painted metric trees and unpainted metric trees.
Thus $\tau_3$ is not a level-tree.
\end{ex}
\par
Let $T\mathcal{M}_n$ be the set consisting of all binary level-trees with $n$-leaves.
We topologize $T\mathcal{M}_n$ by the embedding
\begin{align*}
T\mathcal{M}_n\subset \bigcup _{\substack{\tau :\,{\rm a\, binary\, painted\, tree}\\ {\rm with\,}n\,{\rm leaves}}}\{\tau\}\times I^{\times N(\tau)},
\end{align*}
where $N(\tau)$ denotes the number of internal edges in $\tau$.
Just like unpainted metric trees, we consider reduction of edges with length 0.
For $\tau \in T\mathcal{M}_n$, remove all internal edges with length 0 and unite the end vertices of each removed edges.
We denote this new level-tree by $R(\tau)$.
We say $\rho$ and $\tau \in T\mathcal{M}_n$ are equivalent if $R(\rho)=R(\tau)$ as painted metric trees.
Let $\mathcal{J}_n$ be the quotient space of $T\mathcal{M}_n$ by this relation.
For example, $\mathcal{J}_1$ is a point, $\mathcal{J}_2$ is a line segment and $\mathcal{J}_3$ is a hexagon.
\par 
The grafting constructions define the continuous maps $\delta_k(r,t):\mathcal{J}_r\times \mathcal{K}_t\rightarrow \mathcal{J}_{r+t-1}$ and $\delta(t,r_1,\cdots ,r_t):\mathcal{K}_t\times \mathcal{J}_{r_1}\times \cdots \mathcal{J}_{r_t}\rightarrow \mathcal{J}_{r_1+\cdots +r_t}$.
These maps satisfy the following conditions:
\begin{align*}
&\delta_j(p,r+t-1)(1\times \partial_k(r,t))=\delta_{j+k-1}(p+r-1,t)(\delta_j(p,r)\times 1),\\
&\delta_{j+r-1}(p+r-1,t)(\delta_k(p,r)\times 1)=\delta_k(p+t-1,r)(\delta_j(p,t)\times 1)(1\times T)\tag*{for $k<j$,}\\
&\delta_{p_1+\cdots +p_{j-1}+k}(p,r)(\delta(t,p_1,\cdots,p_t)\times 1)\\
=&\delta(t,p_1,\cdots ,p_{j-1},p_j+r-1,p_{j+1},\cdots,p_t)(1^{\times j}\times \delta_k(p_j,r)\times 1^{\times q-j})T_j\tag*{for $p_1+\cdots +p_t=p$ and $1\leq k\leq r_j$,}\\
&\delta(r+t-1,p_1,\cdots,p_{r+t-1})(\partial_k(r,t)\times 1^{\times r+t-1})\\
=&\delta(r,p_1,\cdots,p_{k-1},p_k+\cdots +p_{k+t-1},p_{k+t},\cdots,p_{r+t-1})(1^{\times k}\times \delta(t,p_k,\cdots,p_{k+t-1})\times 1^{\times r-k})T'_k,
\end{align*}
where the maps $T_j$ and $T'_k$ are defined by $T_j(\tau,\pi_1,\! \cdots \! ,\pi_t,\rho)=(\tau,\pi_1,\! \cdots \! ,\pi_j,\rho,\pi_{j+1},\! \cdots \! ,\pi_t)$ and $T'_k(\rho,\tau,\pi_1,\! \cdots \! ,\pi_{r+t-1})=(\rho,\pi_1,\! \cdots \! ,\pi_{k-1},\tau,\pi_k,\! \cdots \! ,\pi_{r+t-1})$.
Let $\mathcal{H}_n$ be the union of images of these grafting maps in $\mathcal{J}_n$.
Then $\mathcal{J}_n$ is homeomorphic to the cone $C\mathcal{H}_n$ of $\mathcal{H}_n$.
Therefore, $\mathcal{J}_n$ is the $n$-th multiplihedron constructed by Iwase and Mimura in \cite{IM89}.
This implies that there is a homeomorphism $(\mathcal{J}_n,\mathcal{H}_n)\simeq (D^{n-1},S^{n-2})$ for $n\geq 2$.
\par
We compare the above constructions of associahedra and multiplihedra with the $W$-construction in \cite{BV73}.
Let $\mathcal{A}$ be the PRO of semigroups and $\mathcal{L}_1$ be the linear category $0\rightarrow 1$.
In other words, $\mathcal{A}$ is the PRO such that $\mathcal{A}(0,1)$ is empty and $\mathcal{A}(n,1)$ is a point for $n\geq 1$.
From the above construction, $W\mathcal{A}(1,1)$ is a point, $W\mathcal{A}(n,1)=\mathcal{K}_n$ for $n\geq 2$, $LW(\mathcal{A}\otimes \mathcal{L}_1)(n^0,1^1)=\mathcal{J}_n$ for $n\geq 1$ and the compositions in these PROs are compatible with our grafting maps.

\section{Fibrewise $A_n$-spaces and fibrewise $A_n$-maps}
We shall review fibrewise $A_n$-spaces and fibrewise $A_n$-maps. 
For any fibrewise space $E$ over $B$, we will consider $\mathcal{K}_i\times E$ and $\mathcal{J}_i\times E$ as a fibrewise spaces over $B$.
\begin{dfn}
A fibrewise space $E$ over $B$ is called a {\it fibrewise $A_n$-space} (without unit) if a family of fibrewise maps $\{m_i:\mathcal{K}_i\times E^{\times_Bi}\rightarrow E\}_{i=2}^n$, called a {\it fibrewise $A_n$-form} of $E$, is given and satisfies the following condition:
\begin{align*}
&m_i(\partial_k(r,t)(\rho,\tau);x_1,\cdots,x_i)=m_r(\rho;x_1,\cdots,x_{k-1},m_t(\tau;x_k,\cdots,x_{k+t-1}),x_{k+t},\cdots,x_i)\tag*{for $x_l\in E_b,\rho \in \mathcal{K}_r,\tau \in \mathcal{K}_t$.}
\end{align*}\par
Let $(E,\{m_i\})$ and $(E',\{m_i'\})$ be fibrewise $A_n$-spaces over $B$. 
A fibrewise map $f:E\rightarrow E'$ is called a {\it fibrewise $A_n$-map} if there exists a family of fibrewise maps $\{f_i:\mathcal{J}_i\times E^{\times_Bi}\rightarrow E'\}_{i=1}^n$, called a {\it fibrewise $A_n$-form} of $f$, such that
\begin{align*}
&f_1=f\\
&f_i(\delta_k(r,t)(\rho,\tau);x_1,\cdots,x_i)=f_r(\rho;x_1,\cdots,x_{k-1},m_t(\tau;x_k,\cdots,x_{k+t-1}),x_{k+t},\cdots,x_i)\tag*{for $x_l\in E_b,\rho \in \mathcal{J}_r,\tau \in \mathcal{K}_t$,}\\
&f_i(\delta(t,r_1,\cdots,r_t)(\tau,\rho_1,\cdots,\rho_t);x_1,\cdots,x_i)=m_t'(\tau;f_{r_1}(\rho;x_1,\cdots,x_{r_1}),\cdots,f_{r_t}(\rho_t;x_{r_1+\cdots+r_{t-1}+1},\cdots,x_i))\tag*{for $x_l\in E_b,\rho_s \in \mathcal{J}_{r_s},\tau \in \mathcal{K}_t$.}
\end{align*}
A {\it fibrewise $A_n$-equivalence} is a fibrewise $A_n$-map which is also a fibrewise homotopy equivalence. 
If there exists a fibrewise $A_n$-equivalence between two fibrewise $A_n$-spaces, then they are said to be {\it fibrewise $A_n$-equivalent}.\par
In particular, a fibrewise $A_n$-space over a point is called an {\it $A_n$-space}.
The terms such as {\it $A_n$-map} etc. are similarly defined.
\end{dfn}
\begin{rem}
(i) We do not suppose the existence of strict or homotopy units of fibrewise $A_n$-spaces.
Because the author does not know whether the universal fibrewise $A_n$-space $E_n(G)$ constructed in \S 5 has a fibrewise homotopy unit or not.
Every fibrewise pointed space is a fibrewise $A_\infty$-space by a trivial fibrewise $A_\infty$-form in the above sense.
But we do not have any difficulties in showing Theorem \ref{mainthm} with our definition of fibrewise $A_n$-forms.
Because our definition of $A_n$-maps between topological monoids agrees with the definition in \cite{Sta63b}.
\\
(ii) We do not require that fibrewise $A_n$-spaces are fibrewise pointed and fibrewise $A_n$-forms preserve the basepoint in each fibre.
But even if we do, the following arguments are verified analogously.
In that situation, some spaces are replaced by ``based'' ones.
For example, the universal fibration is replaced by the fibrewise pointed universal fibration and the mapping space $\Map(G^n,G)$ in \S 5 is replaced by $\Map^*(G^n,G)$, which is the subspace of $\Map(G^n,G)$ consisting of maps $G^n\rightarrow G$ preserving the basepoint.
\\
(iii) Fibrewise $A_n$-spaces are exactly multiplicative functors from $Q^nW\mathcal{A}$ (see Remark 3.19 in \cite{BV73}) to the category of fibrewise spaces in the sense of \cite{BV73}.
Similarly, fibrewise $A_n$-maps are fibrewise {\it level-tree maps between fibrewise $Q^nW\mathcal{A}$-spaces}.
Here, we define $Q^nLW(\mathcal{A}\otimes \mathcal{L}_1)$ as the PRO-subcategory of $LW(\mathcal{A}\otimes \mathcal{L}_1)$ generated by the morphisms of $LW(\mathcal{A}\otimes \mathcal{L}_1)(\underline{i},k)$, $\underline{i}:[r]\rightarrow \rm{ob}\,\mathcal{L}_1$ with $r\leq n$ and a fibrewise level-tree maps between fibrewise $Q^nW\mathcal{A}$-spaces $E$ and $E'$ means a multiplicative functor from $Q^nLW(\mathcal{A}\otimes \mathcal{L}_1)$ to the category of fibrewise spaces of which the restrictions to $d^1Q^nW\mathcal{A}$ and $d^0Q^nW\mathcal{A}$ give fibrewise $Q^nW\mathcal{A}$-spaces $E$ and $E'$ respectively.
\end{rem}
\begin{ex}\label{example1}
(i) Every topological monoid is an $A_\infty$-space.
More generally, every fibrewise topological monoid is a fibrewise $A_\infty$-space.
Here, a fibrewise pointed space $B\stackrel{\sigma}{\rightarrow}E\stackrel{\pi}{\rightarrow}B$ ($\sigma$ is a section of $\pi$) with a fibrewise map $m:E\times_BE\rightarrow E$ is a {\it fibrewise topological monoid} if $m(1\times_Bm)=m(m\times_B 1)$ and $m(\sigma(\pi(x)),x)=m(x,\sigma(\pi (x)))=x$ for each $x\in E$. 
If $(E,m)$ is a fibrewise topological monoid, then the family of maps $\{m_i:K_i\times E^{\times_B i}\rightarrow E\}$ defined by $m_i(\rho ;x_1,\cdots ,x_i)=m(x_1,\cdots ,m(x_{i-1},x_i)\cdots )$
is a fibrewise $A_\infty$-form of $E$.\\
(ii) An $H$-space is an $A_2$-space. A homotopy associative $H$-space $(G,m)$ together with its associating homotopy $m(1\times m)\simeq m(m\times 1)$ is an $A_3$-space.\\
(iii) For a pointed space $X$, the based loop space $\varOmega X$ of $X$ is naturally an $A_\infty$-space.
In fact, an $A_\infty$-form of $\varOmega X$ is constructed as follows.
Let 
\begin{align*}
P_i=\{ \, (t_0,t_1,\cdots ,t_i)\in I^{i+1}\,|\, 0=t_0<t_1<\cdots <t_i=1\, \}
\end{align*}
be the space of partitions of the unit interval $I=[0,1]$.
Since $P_i$ is contractible, one can construct an $A_\infty$-form $\{m_i:\mathcal{K}_i\times (\varOmega X)^{\times i}\to \varOmega X\}$ of $\varOmega X$ such that
\begin{align*}
m_i(\rho;\ell _1,\cdots ,\ell _i)(t)=\ell _k\left(\frac{t-\omega^i_{k-1}(\rho)}{\omega^i_k(\rho)-\omega^i_{k-1}(\rho)}\right)
\end{align*}
for $\rho \in \mathcal{K}_i$, $\ell_1,\cdots ,\ell_i \in \varOmega X$, $\omega^i_{k-1}(\rho)\leq t\leq \omega^i_k$, where $\omega^i=(\omega^i_0,\omega^i_1,\cdots ,\omega^i_i):\mathcal{K}_i\to P_i$.
One can take this family $\{\omega^i\}$ as independent of $X$.
Since $P_i$ is contractible, such $A_\infty$-form of $\varOmega X$ is unique up to homotopy of $A_\infty$-forms.
\\
(iv) Every homomorphism between topological monoids is an $A_\infty$-map.
We can consider a more general situation.
Let $(E,\{m_i\})$ and $(E',\{m_i'\})$ be fibrewise $A_n$-spaces.
A fibrewise map $f:E\rightarrow E'$ is called a {\it fibrewise $A_n$-homomorphism} if $m_i'(1\times f^{\times_Bi})=fm_i$ for each $i$, where a fibrewise $A_n$-form of $f$ is constructed by forgetting the painting of trees in $\mathcal{J}_r$.
For example, if $X$ and $Y$ are pointed spaces and $f:X\rightarrow Y$ is a pointed map, then the map $\varOmega f:\varOmega X\rightarrow \varOmega Y$ is an $A_\infty$-homomorphism.
\\
(v) An $H$-map between $H$-spaces is an $A_2$-map.
\end{ex}
\par 
The results by Boardman and Vogt in Chapter IV, \S 2 and \S 3 of \cite{BV73} remain true even if $W\mathcal{B}$-spaces are replaced by ``fibrewise $W\mathcal{B}$-spaces''.
Hence, we have the following propositions.
\begin{prp}[(Corollary 4.15 in \cite{BV73})]\label{composition}
Let $E$, $E'$ and $E''$ be fibrewise $A_n$-spaces over $B$ and $f:E\rightarrow E'$ and $g:E'\rightarrow E''$ be fibrewise $A_n$-maps. 
Then $gf:E\rightarrow E''$ is also a fibrewise $A_n$-map.
\end{prp}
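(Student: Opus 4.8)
The plan is to derive this as the fibrewise incarnation of Corollary 4.15 of \cite{BV73}. By Remark (iii), a fibrewise $A_n$-space over $B$ is exactly a multiplicative functor $Q^nW\mathcal{A}\to\bm{Top}\downarrow B$, and a fibrewise $A_n$-map is a fibrewise level-tree map, i.e.\ a multiplicative functor $Q^nLW(\mathcal{A}\otimes\mathcal{L}_1)\to\bm{Top}\downarrow B$ whose restrictions along the two face inclusions $d^1,d^0$ recover its source and target. In this language the statement is formal, so the real task is to see that Boardman and Vogt's constructions in Chapter IV, \S 2 and \S 3 of \cite{BV73} keep their properties over $B$. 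I would first isolate the features of $\bm{Top}\downarrow B$ that get used: it is symmetric monoidal closed with product the fibre product and internal hom the fibrewise mapping space $\map_B(-,-)$, it is cocomplete, the exponential law of \cite{Jam95} recalled in \S 2 is available for the locally sliceable objects (in particular Hurewicz fibrations over a CW base) that occur, and the polytopes $\mathcal{K}_i$ and $\mathcal{J}_i$ are CW, so tensoring with them and forming pushouts along their subcomplexes behave as over a point. Granting these, the $W$-construction, its behaviour on $\mathcal{A}\otimes\mathcal{L}$ for a linear category $\mathcal{L}$, and the cell-by-cell and cofibration lemmas used to analyse $LW(\mathcal{A}\otimes\mathcal{L})$ go through verbatim.

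For the composition, let $\mathcal{L}$ denote the linear category with objects $0,1,2$ and a single nonidentity morphism between each consecutive pair. A multiplicative functor $H\colon Q^nLW(\mathcal{A}\otimes\mathcal{L})\to\bm{Top}\downarrow B$ restricts along the three arrows $0\to1$, $1\to2$, $0\to2$ to three fibrewise $A_n$-maps, the last of which is the composite repackaged. Boardman and Vogt's analysis of the $W$-construction on $\mathcal{A}\otimes\mathcal{L}$ presents $Q^nLW(\mathcal{A}\otimes\mathcal{L})$ as the amalgam of two copies of $Q^nLW(\mathcal{A}\otimes\mathcal{L}_1)$ glued along the copy of $Q^nW\mathcal{A}$ over the object $1$, with further cells attached along the ``free'' edge-length parameters of the $W$-construction; their composition theorem amounts to the statement that the data given on the amalgam extends to such an $H$. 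Applying this: the fibrewise $A_n$-forms of $f$ and $g$ agree over $1$ — both restrict there to the chosen fibrewise $A_n$-form of $E'$ — so they assemble to a functor on the amalgam, which extends to $H\colon Q^nLW(\mathcal{A}\otimes\mathcal{L})\to\bm{Top}\downarrow B$. Restricting $H$ along $0\to2$ produces fibrewise maps $\{h_i\colon\mathcal{J}_i\times E^{\times_Bi}\to E''\}$; reading off the grafting relations of $Q^nLW(\mathcal{A}\otimes\mathcal{L})$ carried by the $0\to2$ part gives $h_1=g\circ f$ together with the two defining identities of a fibrewise $A_n$-form, so $gf$ is a fibrewise $A_n$-map.

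The main obstacle is the extension step — that the amalgamated functor prolongs over all of $Q^nLW(\mathcal{A}\otimes\mathcal{L})$, equivalently that the cells attached to the amalgam are such that structure maps into an arbitrary fibrewise space extend over them. This is the geometric content of Boardman and Vogt's Chapter IV (their inductive description of $LW$ and the cofibration lemmas that accompany it), so essentially all the work is the bookkeeping of checking that each of their arguments is insensitive to the presence of the base $B$ — which it is, given the properties of $\bm{Top}\downarrow B$ listed above. For this reason I would not attempt a direct hands-on induction building $\{h_i\}$ from $\{f_i\}$ and $\{g_i\}$ on $\mathcal{J}_i$: that route would require re-proving fibrewise the very structural facts about the two-stage multiplihedron that the $W$-construction already encapsulates. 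Hence I would cite Corollary 4.15 of \cite{BV73} for the pattern of argument and devote the write-up to verifying that it survives passage to $\bm{Top}\downarrow B$, as indicated in the sentence preceding the proposition.
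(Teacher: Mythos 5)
Your proposal takes essentially the same approach as the paper: the paper's ``proof'' is precisely the citation of Corollary 4.15 of \cite{BV73} together with the preceding assertion that the Chapter IV, \S 2 and \S 3 machinery of Boardman--Vogt carries over to fibrewise $W\mathcal{B}$-spaces (and the remark that Proposition 4.6 of \cite{BV73} covers the level-tree version they do not spell out). Your fuller account of which properties of $\bm{Top}\downarrow B$ make the transfer work is consistent with, and more explicit than, what the paper records.
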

\begin{prp}[(Corollary 4.20 in \cite{BV73})]\label{equivalence1}
Let $E$ be a fibrewise space over $B$ and $E'$ be a fibrewise $A_n$-space over $B$. 
If $f:E\rightarrow E'$ is a fibrewise homotopy equivalence, then there exists a fibrewise $A_n$-form of $E$ such that $f$ is a fibrewise $A_n$-equivalence. 
\end{prp}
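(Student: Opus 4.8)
The plan is to transport the fibrewise $A_n$-structure of $E'$ along $f$ by running the Boardman--Vogt argument behind Corollary 4.20 of \cite{BV73} in the fibrewise category. By the description at the end of \S3 together with Remark (iii) of \S4, a fibrewise $A_n$-space is a multiplicative functor $Q^nW\mathcal{A}\to(\text{fibrewise spaces over }B)$ and a fibrewise $A_n$-map is a fibrewise level-tree map, i.e. a multiplicative functor on $Q^nLW(\mathcal{A}\otimes\mathcal{L}_1)$; so the task is to extend the given $Q^nW\mathcal{A}$-structure on $E'$ to a $Q^nLW(\mathcal{A}\otimes\mathcal{L}_1)$-structure whose $d^0$-restriction is $E'$, whose unary painted map is $f$, and whose $d^1$-restriction is a fibrewise $A_n$-form on $E$. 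Concretely I would construct fibrewise maps $m_i\colon\mathcal{K}_i\times E^{\times_Bi}\to E$ for $2\le i\le n$ and $f_i\colon\mathcal{J}_i\times E^{\times_Bi}\to E'$ for $1\le i\le n$ with $f_1=f$, satisfying the defining identities of fibrewise $A_n$-forms, by induction on $n$, fixing once and for all a fibrewise homotopy inverse $g\colon E'\to E$ of $f$ together with fibrewise homotopies $gf\simeq\mathrm{id}_E$ and $fg\simeq\mathrm{id}_{E'}$.

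For $n=2$, since $\mathcal{K}_2$ is a point one puts $m_2=g\circ m_2'\circ(f\times_Bf)$, and since $\mathcal{J}_2$ is an interval one takes $f_2$ to be a fibrewise homotopy from $f\circ m_2=fg\circ m_2'\circ(f\times_Bf)$ to $m_2'\circ(f\times_Bf)$, supplied by $fg\simeq\mathrm{id}_{E'}$. For the inductive step, suppose $\{m_i\}_{i<n}$ and $\{f_i\}_{i<n}$ have been produced. The $A_n$-identities already determine $m_n$ on $\mathcal{L}_n\times E^{\times_Bn}$ (via the grafting maps $\partial_k$) and $f_n$ on $\widehat{\mathcal{H}}_n\times E^{\times_Bn}$, where $\widehat{\mathcal{H}}_n\subset\mathcal{H}_n$ is the union of all codimension-one faces of $\mathcal{J}_n$ except the face $\delta_1(1,n)(\mathrm{pt},-)\colon\mathcal{K}_n\hookrightarrow\mathcal{J}_n$: each of those other faces is an image of some $\delta_k(r,t)$ with $r\ge2$ or some $\delta(t,r_1,\dots,r_t)$ with $t\ge2$, hence involves only $m_t'$ $(t\le n)$ and the lower maps $m_i,f_i$ $(i<n)$. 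Using the relations among $\partial_k,\delta_k,\delta$ from \S3 one checks that these two prescriptions are mutually consistent and that, under $\mathcal{K}_n\hookrightarrow\mathcal{J}_n$, the prescribed $f_n$ restricts on $\mathcal{L}_n\times E^{\times_Bn}$ to $f$ composed with the prescribed $m_n$. Since $(\mathcal{K}_n,\mathcal{L}_n)\cong(D^{n-2},S^{n-3})$, and $\widehat{\mathcal{H}}_n$ and $\mathcal{K}_n$ are the two $(n-2)$-disks into which $\mathcal{H}_n\cong S^{n-2}=\partial\mathcal{J}_n$ is divided along $\mathcal{L}_n$, all the inclusions $\mathcal{L}_n\hookrightarrow\mathcal{K}_n$, $\widehat{\mathcal{H}}_n\hookrightarrow\mathcal{J}_n$, $\mathcal{H}_n\hookrightarrow\mathcal{J}_n$ are cofibrations, and their products with $E^{\times_Bn}$ over $B$ are fibrewise cofibrations; here one uses that $E\to B$ is a Hurewicz fibration over a CW complex, so that the relevant fibrewise spaces are locally sliceable and the fibrewise homotopy extension property applies. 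Note moreover that $\widehat{\mathcal{H}}_n$ is a fibrewise deformation retract of $\mathcal{J}_n$.

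With these preparations, I would first form a preliminary fibrewise lift $\mathcal{K}_n\times E^{\times_Bn}\to E$ by applying $g$ to the prescribed value of $f_n$ on the $\mathcal{K}_n$-face, then correct it near $\mathcal{L}_n\times E^{\times_Bn}$ through the fibrewise homotopy extension property, using $gf\simeq\mathrm{id}_E$, to obtain $m_n$ with the prescribed restriction to $\mathcal{L}_n\times E^{\times_Bn}$. This determines $f_n$ on all of $\mathcal{H}_n\times E^{\times_Bn}$, namely the prescribed map on $\widehat{\mathcal{H}}_n\times E^{\times_Bn}$ glued to $f\circ m_n$ on the $\mathcal{K}_n$-face, and it remains to extend $f_n$ over $\mathcal{J}_n\times E^{\times_Bn}\cong C\mathcal{H}_n\times E^{\times_Bn}$. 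Such an extension exists because the prescribed data is supported on the contractible disk $\widehat{\mathcal{H}}_n$ and, thanks to $fg\simeq\mathrm{id}_{E'}$, the map $f\circ m_n$ on the $\mathcal{K}_n$-face can be taken fibrewise homotopic rel $\mathcal{L}_n\times E^{\times_Bn}$ to the prescribed value of $f_n$ on $\widehat{\mathcal{H}}_n$ regarded as a map on $\mathcal{K}_n\cong\widehat{\mathcal{H}}_n$ (the boundary loop obstructing the rel-boundary homotopy is a path followed by its reverse, hence killable by the usual manipulation). Choosing the homotopies $gf\simeq\mathrm{id}$ and $fg\simeq\mathrm{id}$ at the outset makes these deformations cohere as $n$ grows. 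Finally $f_1=f$ is a fibrewise homotopy equivalence by hypothesis, so the fibrewise $A_n$-map constructed is a fibrewise $A_n$-equivalence.

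The crux is this coherence: ensuring inductively that the obstruction to extending $f_n$ over the cone $\mathcal{J}_n$ vanishes and that the corrections at stage $n$ are compatible with those at stage $n+1$. This is exactly the assertion in \cite{BV73} that $W\mathcal{A}$ and $LW(\mathcal{A}\otimes\mathcal{L}_1)$ are cofibrant and that $W\mathcal{B}$-structures carry along homotopy equivalences. Since each step of that proof is an extension along a cofibration obtained by attaching a cell of the form $D^k\times(\text{product of associahedra and multiplihedra})$, and each such cell, after multiplication by the corresponding $E^{\times_Bi}$ over $B$, remains a fibrewise cofibration by \S2, the argument goes through verbatim in the fibrewise setting, which is the claim made before the statement.
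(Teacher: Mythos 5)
Your proposal matches the paper's approach: the paper simply cites Corollary 4.20 of Boardman--Vogt together with the observation that their Chapter IV arguments carry over to fibrewise $W\mathcal{B}$-spaces because each step is an extension along a cofibration, and your write-up unpacks exactly that cell-by-cell transfer of structure using the fibrewise homotopy extension property over the pairs $(\mathcal{K}_i,\mathcal{L}_i)$ and $(\mathcal{J}_i,\mathcal{H}_i)$. One small point worth tightening: the obstruction-killing step for extending $f_i$ over the cone $\mathcal{J}_i$ is cleanest not as a ``path followed by its reverse'' but as the observation that, because $f$ is a fibrewise homotopy equivalence, it induces a bijection on fibrewise homotopy classes rel $\mathcal{L}_i\times E^{\times_Bi}$ of maps $\mathcal{K}_i\times E^{\times_Bi}\to E'$, so $m_i$ can be chosen on the interior of $\mathcal{K}_i$ to make $f\circ m_i$ rel-boundary homotopic to $f_i|_{\widehat{\mathcal{H}}_i}$ --- this is exactly the freedom that $fg\simeq\mathrm{id}_{E'}$ provides and is the content of the BV cofibrancy argument you invoke.
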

\begin{prp}[(Corollary 4.21 in \cite{BV73})]\label{inverse}
Let $E$ and $E'$ be fibrewise $A_n$-spaces over $B$ and $f:E\rightarrow E'$ be a fibrewise $A_n$-equivalence.
Then the fibrewise homotopy inverse $g:E'\rightarrow E$ of $f$ is also a fibrewise $A_n$-equivalence.
\end{prp}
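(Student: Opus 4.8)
The plan is to obtain this as the fibrewise counterpart of Corollary~4.21 of \cite{BV73}. One might first hope to reduce it to the two preceding propositions: applying Proposition~\ref{equivalence1} to $g:E'\to E$, with $E'$ regarded as a bare fibrewise space and $E$ carrying its given form $\{m_i\}$, produces a fibrewise $A_n$-form $\mu$ on $E'$ together with a fibrewise $A_n$-form on $g$ making $g:(E',\mu)\to(E,\{m_i\})$ a fibrewise $A_n$-equivalence, and then $fg:(E',\mu)\to(E',\{m_i'\})$ is a fibrewise $A_n$-equivalence by Proposition~\ref{composition} with $fg\simeq\mathrm{id}_{E'}$; but to finish one would need the identity $(E',\{m_i'\})\to(E',\mu)$ to be a fibrewise $A_n$-map, which is again an instance of the present statement. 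So I would instead reprove directly the relevant homotopy-invariance step of Chapter~IV, \S 2 and \S 3 of \cite{BV73}. Its only structural ingredients are the fibrewise exponential law recalled in \S 2 (applicable because our fibrewise spaces are Hurewicz fibrations over a CW complex, hence locally sliceable), fibrewise mapping cylinders, and the homotopy extension property for fibrewise cofibrations, all of which are available here; what follows spells the argument out in terms of the multiplihedra.

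Fix a fibrewise $A_n$-form $\{f_i:\mathcal{J}_i\times E^{\times_B i}\to E'\}_{i=1}^{n}$ of $f$ with $f_1=f$, a fibrewise homotopy inverse $g:E'\to E$, and fibrewise homotopies $fg\simeq\mathrm{id}_{E'}$ and $gf\simeq\mathrm{id}_E$. I would construct a fibrewise $A_n$-form $\{g_i:\mathcal{J}_i\times {E'}^{\times_B i}\to E\}_{i=1}^{n}$ of $g$ by induction on $i$, with $g_1=g$ (note $\mathcal{J}_1$ is a point). Assuming $g_1,\dots,g_{i-1}$ have been built, the two families of equations defining a fibrewise $A_n$-form of $g$, applied to grafted trees, prescribe $g_i$ on $\mathcal{H}_i\times {E'}^{\times_B i}$ in terms of $g_1,\dots,g_{i-1}$ and the multiplications $\{m_t\}$, $\{m_t'\}$; using the grafting relations among the $\mathcal{J}$'s and $\mathcal{K}$'s from \S 3 together with the inductive hypothesis, one checks, as in \cite{IM89}, that this prescription is well defined, continuous on $\mathcal{H}_i\times {E'}^{\times_B i}$, and compatible with the projection to $B$. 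Since $(\mathcal{J}_i,\mathcal{H}_i)\simeq(D^{i-1},S^{i-2})$ for $i\ge 2$, it remains to extend this boundary datum over all of $\mathcal{J}_i\times {E'}^{\times_B i}$.

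That extension is produced by conjugating the $f$-form by $g$: one substitutes the $g$-images of the $E'$-arguments into the appropriate $f_j$'s, transports the output back through $g$, and glues in the homotopies $fg\simeq\mathrm{id}_{E'}$ and $gf\simeq\mathrm{id}_E$, routing the parameters of the $f_j$'s and of these homotopies over $\mathcal{J}_i$ in the combinatorial pattern dictated by the $W$-construction; this is precisely what makes the resulting family satisfy both families of defining equations and restrict correctly to $\mathcal{H}_i$. Once $\{g_i\}_{i=1}^{n}$ is in hand, $g$ is a fibrewise $A_n$-map, and being a fibrewise homotopy equivalence by hypothesis it is a fibrewise $A_n$-equivalence. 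I expect this inductive step to be the main obstacle: arranging the two homotopies and the forms $\{f_j\}$, $\{m_t\}$, $\{m_t'\}$ over the face structure of the multiplihedron $\mathcal{J}_i$ so that the value on $\mathcal{H}_i$ is matched exactly while all higher coherences hold --- equivalently, making the lifting step of Chapter~IV of \cite{BV73} explicit on multiplihedra. The non-unital, base-point-free conventions of \S 4 do not affect any of this, and the remaining fibrewise bookkeeping --- continuity of the various adjoint maps and their compatibility with $\pi$ --- is routine given \S 2. In practice the most economical route is simply to cite the fibrewise analogue of Corollary~4.21 of \cite{BV73}.
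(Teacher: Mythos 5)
Your conclusion --- that the economical route is to cite the fibrewise analogue of Corollary 4.21 of \cite{BV73} --- is exactly what the paper does: the paragraph preceding Propositions \ref{composition}--\ref{inverse} asserts once and for all that the results of Chapter IV, \S 2--3 of \cite{BV73} carry over to fibrewise $W\mathcal{B}$-spaces, and the remark immediately after them points out that, although Boardman and Vogt do not state the level-tree (multiplihedral) version explicitly, their Proposition 4.6 supplies it. The circularity you identify in the naive reduction via Propositions \ref{equivalence1} and \ref{composition} is real and worth recording: one would indeed need the present statement to invert the identity $A_n$-map comparing the two forms on $E'$. The direct construction you sketch --- building $\{g_i\}$ inductively, with $g_i$ prescribed on $\mathcal{H}_i\times{E'}^{\times_Bi}$ by the grafting relations and extended over $\mathcal{J}_i$ by conjugating $\{f_j\}$ through $g$ and the two homotopies --- is the right shape, but, as you yourself concede, the extension step is not automatic: one must route the parameters of the homotopies $fg\simeq\mathrm{id}$, $gf\simeq\mathrm{id}$ and of the $f_j$'s over the face lattice of $\mathcal{J}_i$ so as to match the inductively prescribed boundary value exactly, and then correct via the homotopy lifting property of $E$; that coherence argument is precisely what you end up outsourcing to \cite{BV73}. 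So the proposal agrees with the paper's proof in its final form, but the sketched self-contained alternative leaves its central inductive step unfinished.
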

\begin{rem}
(i) From Propositions \ref{composition} and \ref{inverse}, the fibrewise $A_n$-equivalence is an equivalence relation.
\\
(ii) Boardman and Vogt have not explicitly shown the level-tree map version.
However, Proposition 4.6 in \cite{BV73} guarantees us the above propositions.
The $Q^nW\mathcal{B}$-space version can also be verified.
\\ 
(iii) For the fibrewise pointed version, we need to assume the fibrewise homotopy extension property of the sections of fibrewise pointed spaces.
This property corresponds to well-pointedness of pointed spaces.
For the pointed version, see Chapter 5, \S 5 in \cite{BV73}.
\end{rem}

\section{The classification theorem for fibrewise $A_n$-spaces}
Let $B'$ be a space, $f:B'\rightarrow B$ be a map and $E$ be a fibrewise $A_n$-space over $B$. 
The pull-back $f^*E$ of $E$ by $f$ is naturally a fibrewise $A_n$-space.
\par
Let $E$ and $E'$ be fibrewise $A_n$-spaces over $B$ and $B'$ respectively.
We say a fibre map $f$ covering $\bar f:B\rightarrow B'$ is a {\it fibrewise $A_n$-map} over $\bar f$ if the induced map $E\rightarrow \bar f^*E'$ by $f$ is a fibrewise $A_n$-map.
In particular, $\bar f^*E'\rightarrow E'$ is a fibrewise $A_n$-map over $\bar f$.
\begin{prp}\label{homotopy fibrewise}
Let $E$ and $E'$ be fibrewise $A_n$-spaces over $B$ and $B'$ respectively.
If a fibrewise $A_n$-map $f:E\rightarrow E'$ over $\bar f$ is homotopic to a fibre map $g:E\rightarrow E'$ covering $\bar g:B\rightarrow B'$ with a homotopy from $f$ to $g$ covering a homotopy from $\bar f$ to $\bar g$, then $g$ is a fibrewise $A_n$-map over $\bar g$.
\end{prp}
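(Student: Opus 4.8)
The plan is to reduce the statement to the claim that the homotopy itself is a fibrewise $A_n$-map, and then to obtain the fibrewise $A_n$-form of $g$ by extending that of $f$ along the homotopy direction. Let $\bar H:I\times B\to B'$ be the homotopy covered by the given homotopy $H:I\times E\to E'$, so that $\bar H|_{\{0\}\times B}=\bar f$ and $\bar H|_{\{1\}\times B}=\bar g$. Put $\tilde E'=\bar H^{*}E'$; this is a fibrewise $A_n$-space over $I\times B$ whose restrictions over $\{0\}\times B$ and $\{1\}\times B$ are $\bar f^{*}E'$ and $\bar g^{*}E'$ respectively. Regard $I\times E$ as a fibrewise $A_n$-space over $I\times B$ with $A_n$-form $m_i^{I\times E}(\rho;(t,x_1),\dots,(t,x_i))=(t,m_i(\rho;x_1,\dots,x_i))$, and note $(I\times E)^{\times_{I\times B}i}=I\times E^{\times_Bi}$. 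The pair $(H,\bar H)$ defines a fibre map $\hat H:I\times E\to\tilde E'$ over $I\times B$ by $\hat H(t,x)=((t,\pi_E(x)),H(t,x))$, whose restriction over $\{0\}\times B$ is, under the identification $\tilde E'|_{\{0\}\times B}\simeq\bar f^{*}E'$, exactly the fibrewise $A_n$-map induced by $f$, and whose restriction over $\{1\}\times B$ is the fibre map induced by $g$. Hence it suffices to show that $\hat H$ is a fibrewise $A_n$-map over $I\times B$.

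To prove this I would extend the given fibrewise $A_n$-form $\{f_i:\mathcal{J}_i\times E^{\times_Bi}\to\bar f^{*}E'\}_{i=1}^{n}$ of $f$ to a fibrewise $A_n$-form $\{\hat f_i:\mathcal{J}_i\times I\times E^{\times_Bi}\to\tilde E'\}_{i=1}^{n}$ of $\hat H$ by induction on $i$. Set $\hat f_1=\hat H$. Suppose $\hat f_1,\dots,\hat f_{i-1}$ have been constructed so that $\hat f_j|_{t=0}=f_j$ and all grafting relations among $\hat f_{\le i-1}$, $\{m_j^{I\times E}\}$ and the $A_n$-form of $\tilde E'$ hold. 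The grafting relations defining a fibrewise $A_n$-form constrain $\hat f_i$ only on the image $\mathcal{H}_i$ of the grafting maps $\delta_k(r,t)$ and $\delta(t,r_1,\dots,r_t)$, where (since the relevant $r$, $r_s$ are all $<i$) they express $\hat f_i$ in terms of $\hat f_{<i}$ and the multiplications; together with the requirement $\hat f_i|_{t=0}=f_i$ this prescribes $\hat f_i$ as a fibrewise map $\psi_i$ over $I\times B$ on the subspace $(\mathcal{J}_i\times\{0\}\ \cup\ \mathcal{H}_i\times I)\times E^{\times_Bi}$. Since $(\mathcal{J}_i,\mathcal{H}_i)\simeq(D^{i-1},S^{i-2})$, the pair $\bigl(\mathcal{J}_i\times I,\ \mathcal{J}_i\times\{0\}\cup\mathcal{H}_i\times I\bigr)$ admits a retraction $r_i$, namely the standard retraction of a box onto its bottom face together with its sides. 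Define $\hat f_i=\psi_i\circ(r_i\times\mathrm{id}_{E^{\times_Bi}})$; this is a fibrewise map over $I\times B$ extending $\psi_i$, and since $r_i$ restricts to the identity on $\mathcal{H}_i\times I$ it again satisfies every grafting relation. Restricting $\{\hat f_i\}$ to $t=1$ yields maps $g_i:=\hat f_i|_{t=1}:\mathcal{J}_i\times E^{\times_Bi}\to\bar g^{*}E'$ which, by naturality of all the constructions in the base, form a fibrewise $A_n$-form of the map $E\to\bar g^{*}E'$ induced by $g$; so $g$ is a fibrewise $A_n$-map over $\bar g$.

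The routine points are that in our category fibre products of Hurewicz fibrations are again fibrewise spaces, so that $\mathcal{J}_i\times I\times E^{\times_Bi}$ and $\mathcal{J}_i\times\{0\}\cup\mathcal{H}_i\times I$ carry legitimate fibrewise structures over $I\times B$; that $r_i\times\mathrm{id}$ covers the identity on $I\times B$; and that restriction to $t=0$ and $t=1$ is compatible with all the structure involved. The step I expect to be the main obstacle is checking that $\psi_i$ is well defined and continuous: one must verify that whenever a point of $\mathcal{H}_i$ lies in the images of two different grafting maps, the values forced by the two corresponding grafting relations agree, and that they agree at $t=0$ with $f_i$. Both reductions follow from the coherence identities among the maps $\partial_k$, $\delta_k$ and $\delta(t,r_1,\dots,r_t)$ recorded in \S 3 together with the $A_n$-relations already satisfied by $\hat f_{<i}$ and by the multiplications, exactly as in the non-fibrewise case treated in \cite{BV73} and \cite{IM89}; passing to the fibrewise setting adds nothing genuinely new because every map used is natural in the base.
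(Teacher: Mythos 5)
Your overall architecture matches the paper's: you reduce to constructing a fibrewise $A_n$-form of the map $I\times E\to\bar H^{*}E'$ induced by the given homotopy, you induct on $i$, and you correctly identify that at each stage the grafting relations plus the initial condition at $t=0$ prescribe $\hat f_i$ on $(\mathcal{J}_i\times\{0\}\cup\mathcal{H}_i\times I)\times E^{\times_B i}$, after which one extends to $\mathcal{J}_i\times I\times E^{\times_B i}$ using $(\mathcal{J}_i,\mathcal{H}_i)\simeq(D^{i-1},S^{i-2})$.

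However, the extension step itself is wrong. You define $\hat f_i=\psi_i\circ(r_i\times\mathrm{id})$ where $r_i$ is a retraction of $\mathcal{J}_i\times I$ onto $\mathcal{J}_i\times\{0\}\cup\mathcal{H}_i\times I$, and you assert this is a fibrewise map over $I\times B$. It is not. Write $r_i(\rho,t)=(\rho',t')$; then $\hat f_i(\rho,t,x)=\psi_i(\rho',t',x)$ lies in the fibre of $\tilde E'=\bar H^{*}E'$ over $(t',\pi(x))$, not over $(t,\pi(x))$. Since $r_i$ necessarily mixes the $\mathcal{J}_i$- and $I$-coordinates (it collapses the top face of the box onto the bottom and sides), $t'\neq t$ on the interior, so the composite violates the fibrewise condition over $I\times B$. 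Your retraction is a perfectly good way to produce an extension of an unconstrained map into a space, but the target here is a fibration over $I\times B$ in which the $I$-coordinate is part of the base, and the retraction does not cover the identity of that base. This is precisely where the fibrewise setting does add something: the argument must use the homotopy lifting property of $E'$, which is exactly why the paper's standing assumption that all fibrewise spaces are Hurewicz fibrations matters here. The paper performs the extension as a relative lifting problem for the fibration $\bar h^{*}E'\to I\times B$ along the acyclic (closed, deformation-retract) cofibration $(\mathcal{J}_i\times\{0\}\cup\mathcal{H}_i\times I)\times E^{\times_B i}\hookrightarrow\mathcal{J}_i\times I\times E^{\times_B i}$; the retraction you wrote down is an ingredient of why that lifting problem is solvable, but it cannot replace the lifting. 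Your proposal as written never invokes the fibration hypothesis on $E'$, which is the tell-tale sign of the gap.
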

\begin{proof}
Take a homotopy $h:I\times E\rightarrow E'$ between $f$ and $g$ covering a homotopy $\bar h:I\times B\rightarrow B'$ between $\bar f$ and $\bar g$.
We construct a fibrewise $A_n$-form $\{h_i':\mathcal{J}_i\times I\times E^{\times_Bi}\rightarrow \bar h^*E'\}$ of the fibrewise map $h':I\times E\rightarrow \bar h^*E'$ induced by $h$, where $\{h_i'|_{0\times B}\}$ is equal to the fibrewise $A_n$-form $\{f_i'\}$ of the fibrewise map $E\rightarrow \bar f^*E'$ induced by $f$. \par
Assume we have constructed $h_1',\cdots ,h_{i-1}'$.
Since $E'$ has the homotopy lifting property and $(\mathcal{J}_i,\mathcal{H}_i)\simeq (D^{i-1},S^{i-2})$, we can extend the fibrewise map $h_i':((\mathcal{J}_i\times 0)\cup (\mathcal{H}_i\times I))\times E^{\times_Bi}\rightarrow \bar h^*E'$ defined by
\begin{align*}
&h_i'(\rho ,s;x_1,\! \cdots \!,x_i)\\
&=\! \left\{
\begin{array}{ll}
f_i'(\rho ;x_1,\cdots ,x_i) & (s=0) \\
h_r'(\rho' ,s;x_1,\cdots ,x_{k-1},m_t(\tau' ;x_k,\cdots ,x_{k+t-1}),x_{k+t},\cdots ,x_i) & (\rho =\delta _k(r,t)(\rho' ,\tau' )) \\
m_t'(\tau' ;h_{r_1}'(\rho _1',s;x_1,\! \cdots \! ,x_{r_1}),\! \cdots \! ,h_{r_t}'(\rho _t',s;x_{r_1+\cdots +r_{t-1}+1},\! \cdots \! ,x_i)) & (\rho \! =\! \delta (t,r_1,\! \cdots \! ,r_t)(\tau' \! ,\rho_1' ,\! \cdots \! ,\rho_t' )) \\
\end{array}
\right.
\end{align*}
to $\mathcal{J}_i\times I\times E^{\times_Bi}$, where $\{m_i\}$ and $\{m_i'\}$ are fibrewise $A_n$-forms of $E$ and $E'$, respectively.
We have constructed a desired map $h_i$.
Therefore $g$ is a fibrewise $A_n$-map over $\bar g$.
\end{proof}
\begin{rem}
In the pointed case, since we have used the homotopy lifting property, we have to assume that $E'$ is an {\it ex-fibration} (see \cite{CJ98}).
\end{rem}
\begin{prp}\label{class1}
Let $E$ be a fibrewise $A_n$-space over $B$ and $B'$ be a space.
If maps $f,g:B'\rightarrow B$ are homotopic, then $f^*E$ and $g^*E$ are fibrewise $A_n$-equivalent.
\end{prp}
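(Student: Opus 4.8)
The plan is to exhibit $f^*E$ and $g^*E$ as the two end-restrictions of a single fibrewise $A_n$-space over $I\times B'$, and then to slide from one end to the other by lifting a homotopy of base maps and invoking Proposition \ref{homotopy fibrewise}.

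Fix a homotopy $H\colon I\times B'\to B$ with $H|_{\{0\}\times B'}=f$ and $H|_{\{1\}\times B'}=g$, and set $\mathcal{E}:=H^*E$, which is a fibrewise $A_n$-space over $I\times B'$ since pull-backs of fibrewise $A_n$-spaces are fibrewise $A_n$-spaces. Writing $\iota_t\colon B'\to I\times B'$ for the slice $b\mapsto(t,b)$, we have $\iota_0^*\mathcal{E}\cong f^*E$ and $\iota_1^*\mathcal{E}\cong g^*E$, and the canonical fibre map $c_0\colon f^*E\to\mathcal{E}$ is a fibrewise $A_n$-map over $\iota_0$, as remarked just before Proposition \ref{homotopy fibrewise}. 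The maps $\iota_0$ and $\iota_1$ are homotopic via $K\colon I\times B'\to I\times B'$, $K(s,b)=(s,b)$. Since $\mathcal{E}\to I\times B'$ is a Hurewicz fibration, we may lift the homotopy $(s,x)\mapsto K(s,p(x))$, where $p\colon f^*E\to B'$ is the projection, starting from $c_0$; this yields a fibre homotopy $\widetilde K\colon I\times f^*E\to\mathcal{E}$ with $\widetilde K|_{\{0\}\times f^*E}=c_0$ and covering $K$. Put $g':=\widetilde K|_{\{1\}\times f^*E}$, a fibre map covering $\iota_1$. Applying Proposition \ref{homotopy fibrewise} to $c_0$, $g'$ and the homotopy $\widetilde K$ over $K$ shows that $g'$ is a fibrewise $A_n$-map over $\iota_1$; equivalently, the induced fibre map $\phi\colon f^*E\to\iota_1^*\mathcal{E}=g^*E$ over $B'$ is a fibrewise $A_n$-map.

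It remains to check that $\phi$ is a fibrewise homotopy equivalence, for then it is a fibrewise $A_n$-equivalence by definition. On the fibre over $b\in B'$, the map $\phi$ restricts to the transport $E_{f(b)}\to E_{g(b)}$ of the fibration $E\to B$ along the path $s\mapsto H(s,b)$, which is a homotopy equivalence. Thus $\phi$ is a fibre map between Hurewicz fibrations over $B'$ that is a homotopy equivalence on every fibre, hence a fibrewise homotopy equivalence by the standard Dold-type criterion. (If one prefers not to invoke that criterion over an arbitrary base, an explicit fibrewise homotopy inverse is produced by the same lifting procedure applied to the reversed homotopy $s\mapsto(1-s,b)$, the two composites being fibrewise homotopic to the identities by lifting the evident null-homotopies, rel endpoints, of the concatenated paths in $I$.) Therefore $f^*E$ and $g^*E$ are fibrewise $A_n$-equivalent.

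The steps are essentially routine once the framework of Proposition \ref{homotopy fibrewise} is in place; the only points needing care are bookkeeping ones: ensuring that the lift $\widetilde K$ covers precisely the base homotopy $K$ so that Proposition \ref{homotopy fibrewise} applies verbatim, that $g'$ genuinely covers $\iota_1$ so that it descends to a fibre map $f^*E\to g^*E$ over $B'$, and (for the last paragraph) the passage from ``homotopy equivalence on each fibre'' to ``fibrewise homotopy equivalence''. The latter is the step I would regard as the main potential obstacle, and it is handled either by the Dold criterion or by the explicit inverse construction indicated above.
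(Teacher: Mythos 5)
Your proof is correct and takes essentially the same route as the paper's: both reduce to a fibrewise $A_n$-space over a cylinder $I\times B'$, lift the evident homotopy through the Hurewicz fibration starting from the canonical fibrewise $A_n$-map at the $0$-end, and apply Proposition \ref{homotopy fibrewise} to conclude that the transport to the $1$-end is a fibrewise $A_n$-map, hence a fibrewise $A_n$-equivalence once it is seen to be a fibrewise homotopy equivalence. The paper is terser on that last step (it simply asserts that the end-restriction of the lifted homotopy is a fibrewise homotopy equivalence), which you have usefully spelled out via the Dold criterion or the explicit inverse produced by the reversed homotopy.
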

\begin{proof}
It is sufficient to prove that for a fibrewise $A_n$-space $E$ over $I\times B$, $E|_{0\times B}$ and $E|_{1\times B}$ are fibrewise $A_n$-equivalent.
From the homotopy lifting property of $E$, there exists a fibrewise map $h:I\times (E|_{0\times B})\rightarrow E$ over $I\times B$.
From Proposition \ref{homotopy fibrewise}, $h|_{1\times E}:E|_{0\times B}\rightarrow E|_{1\times E}$ is a fibrewise $A_n$-map.
Since $h|_{1\times E}$ is a fibrewise homotopy equivalence, $E|_{0\times B}$ and $E|_{1\times B}$ are fibrewise $A_n$-equivalent.
\end{proof}
\par
In the rest of this section, we will construct the classifying space for fibrewise $A_n$-spaces with fibres $A_n$-equivalent to a fixed $A_n$-space by the same method as in \cite{CS00}. 
Let $(G,\{\mu_i\})$ be an $A_n$-space and $B$ be a space, where both $G$ and $B$ has homotopy types of CW complexes. 
Assume every fibre of a fibrewise $A_n$-space is $A_n$-equivalent to $G$ in the rest of this section.\par
Let $E$ be a fibrewise space such that fibres of $E$ are homotopy equivalent to $G$. Define
\begin{align*}
M_n[E]=\coprod_{b\in B}\left. \left\{\, \{m_i\} \in \prod_{i=2}^n \Map(\mathcal{K}_i\times E_b^{\times i},E_b)\, \right| 
\begin{array}{l}
\{m_i\}:{\rm an\,}A_n{\rm \mathchar`-form\, of\,}E_b{\rm \, such\, that\,} \\
(E_b,\{m_i\}){\rm \,and\,}(G,\{\mu_i\}){\rm \,are\,}A_n{\rm \mathchar`-equivalent} \\
\end{array}
\right\}.
\end{align*}
\par
Here, we have the projection $M_n[E]\rightarrow M_{n-1}[E]$ defined by forgetting $m_n$. 
Since the inclusion $\mathcal{L}_n\rightarrow \mathcal{K}_n$ has the homotopy extension property, this projection is a Hurewicz fibration, of which the fibres are homotopy equivalent to the component $\varOmega_0^{n-2}\Map(G^{\times n},G)$ containing the constant map $S^{n-2}\to \Map(G^{\times n},G)$ of $\varOmega^{n-2}\Map(G^{\times n},G)$ with basepoint of $\Map(G^{\times n},G)$ given by the map $(x_1,\cdots,x_n)\mapsto \mu_2(x_1,\cdots,\mu_2(x_{n-1},x_n)\cdots)$.
Then a fibrewise space $E_n[E]$ is defined as the pull-back of $E$ by the projection $M_n[E]\rightarrow B$. 
Define the fibrewise map $\tilde m_i:K_i\times E_n[E]^{\times_{M_n[E]}i}\rightarrow E_n[E]$ by $\tilde m_i(\rho ;\{m_i\},x_1,\cdots ,x_i)=m_i(\rho ;x_1,\cdots ,x_i)$, then $\{\tilde m_i\}_{i=2}^n$is a fibrewise $A_n$-form of $E_n[E]$.
Using the space $M_n[E]$, we can state the Lemma 5.7 in \cite{BV73} in the following form.
\begin{lem}[(Lemma 5.7 in \cite{BV73})]\label{class of An str}
Let $E$ be a fibrewise $A_n$-space and $\{m_i\},\{m_i'\}:B\rightarrow M_n[E]$ be fibrewise $A_n$-forms of $E$. 
Then the identity map of $E$ is a fibrewise $A_n$-equivalence between $\{m_i\}$ and $\{m_i'\}$ if and only if they are homotopic as sections of $M_n[E]\rightarrow B$.
\end{lem}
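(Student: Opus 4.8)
The plan is to prove the two implications separately. Since $\mathrm{id}_E$ is automatically a fibrewise homotopy equivalence, the content is that $\mathrm{id}_E\colon(E,\{m_i\})\to(E,\{m_i'\})$ is a fibrewise $A_n$-\emph{map} precisely when $\{m_i\}$ and $\{m_i'\}$ are homotopic as sections of $M_n[E]\to B$.

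Sufficiency I would deduce from Proposition~\ref{homotopy fibrewise}. A homotopy of sections is a continuous family $\{m_i^t\}_{t\in I}$ of fibrewise $A_n$-forms on the fibrewise space $E\to B$ with $\{m_i^0\}=\{m_i\}$ and $\{m_i^1\}=\{m_i'\}$; equip $I\times E\to I\times B$ with the fibrewise $A_n$-form
\begin{align*}
M_i\bigl((t,\rho);(t,x_1),\dots,(t,x_i)\bigr)=\bigl(t,\,m_i^t(\rho;x_1,\dots,x_i)\bigr)
\end{align*}
and call the resulting fibrewise $A_n$-space $\mathcal E$. Writing $\bar\jmath_s\colon B\to I\times B$ for $b\mapsto(s,b)$ and $j_s\colon E\to\mathcal E$ for $e\mapsto(s,e)$, the map $j_0$ is a fibrewise $A_n$-map over $\bar\jmath_0$ from $(E,\{m_i\})$, because the induced map into $\bar\jmath_0^*\mathcal E=(E,\{m_i\})$ is the identity; and $h(s,e)=(s,e)$ is a homotopy from $j_0$ to $j_1$ covering the homotopy $(s,b)\mapsto(s,b)$ from $\bar\jmath_0$ to $\bar\jmath_1$. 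Proposition~\ref{homotopy fibrewise} then makes $j_1$ a fibrewise $A_n$-map over $\bar\jmath_1$, so the induced map $(E,\{m_i\})\to\bar\jmath_1^*\mathcal E=(E,\{m_i'\})$, which is exactly $\mathrm{id}_E$, is a fibrewise $A_n$-map.

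For necessity I would use induction on $n$. When $n=2$ the defining relation of a fibrewise $A_2$-form is vacuous, a fibrewise $A_2$-form $\{f_1,f_2\}$ of $\mathrm{id}_E$ restricts on the two vertices of $\mathcal J_2\cong[0,1]$ to $m_2$ and to $m_2'$ (by the two families of relations in the definition of a fibrewise $A_n$-map), and since any homotopy of multiplications is a homotopy of $A_2$-forms the sufficiency already proved shows that each stage $f_2(t;-)$ again defines a point of $M_2[E]$; so $t\mapsto f_2(t;-)$ is the required homotopy of sections. For the inductive step, the restriction of a fibrewise $A_n$-form $\{f_i\}_{i\le n}$ of $\mathrm{id}_E\colon(E,\{m_i\})\to(E,\{m_i'\})$ to degrees $i\le n-1$ is a fibrewise $A_{n-1}$-form of $\mathrm{id}_E$ between the truncated forms, so by the inductive hypothesis there is a homotopy of sections of $M_{n-1}[E]\to B$ from $\{m_i\}_{i<n}$ to $\{m_i'\}_{i<n}$; lifting it through the Hurewicz fibration $M_n[E]\to M_{n-1}[E]$, starting from $\{m_i\}_{i\le n}$, produces a homotopy of sections of $M_n[E]\to B$ from $\{m_i\}_{i\le n}$ to a form $\{m_i^{\sharp}\}_{i\le n}$ that agrees with $\{m_i'\}$ in degrees $<n$. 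By the sufficiency, $\mathrm{id}_E\colon(E,\{m_i\})\to(E,\{m_i^{\sharp}\})$ is a fibrewise $A_n$-map, and hence so is $\mathrm{id}_E\colon(E,\{m_i^{\sharp}\})\to(E,\{m_i'\})$ by Propositions~\ref{composition} and~\ref{inverse}. This reduces matters to the case where $\{m_i\}$ and $\{m_i'\}$ already coincide for $i<n$, and one must then homotope $m_n$ to $m_n'$ through sections of the bundle over $B$ with fibre $\varOmega_0^{n-2}\Map(G^{\times n},G)$ — the fibre of $M_n[E]\to M_{n-1}[E]$ — along which they agree on the base.

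The hard part is this last step, which is the fibrewise form of Lemma~5.7 of \cite{BV73}. One is handed a fibrewise $A_n$-form $\{g_i\}$ of $\mathrm{id}_E$ whose top piece $g_n\colon\mathcal J_n\times E^{\times_Bn}\to E$ is $m_n$ on the ``multiply-then-map'' facet $\delta_1(1,n)(\{*\}\times\mathcal K_n)$ of $\mathcal J_n$, is $m_n'$ on the ``map-then-multiply'' facet $\delta(n,1,\dots,1)(\mathcal K_n\times\{*\}^n)$, and on the remaining facets is dictated by $g_1,\dots,g_{n-1}$ and the common lower structure; from this one has to manufacture a path of $A_n$-forms from $m_n$ to $m_n'$. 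The geometric input is the combinatorial fact that the inclusions of the two end facets $\mathcal K_n\hookrightarrow\mathcal J_n$ are joined by a homotopy that is compatible, in the appropriate sense, with all of the grafting maps $\partial_k$ and $\delta_k$ of \S 3 — equivalently, that the multiplihedron deformation-retracts onto either end facet compatibly with grafting. Granting that, the fibrewise statement costs nothing further: the retraction is pushed into $g_n$ using only the homotopy lifting property of $E\to B$ together with the cofibrations $\mathcal H_i\hookrightarrow\mathcal J_i$ and the identification $(\mathcal J_i,\mathcal H_i)\simeq(D^{i-1},S^{i-2})$. Verifying that combinatorial fact — which is exactly the homotopical content of the $W$-construction of \cite{BV73} — is where the real work lies.
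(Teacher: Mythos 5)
The paper does not prove this lemma: it quotes it as Lemma~5.7 of \cite{BV73}, relying on the blanket remark in \S 4 that the results of Chapter~IV of \cite{BV73} carry over to the fibrewise setting. So you are attempting to supply a proof the paper deliberately omits, and the comparison is to Boardman--Vogt's argument, not to anything in the text.

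Your sufficiency argument is correct and is the natural one: pack the homotopy of sections into a fibrewise $A_n$-space $\mathcal{E}=I\times E$ over $I\times B$ and invoke Proposition~\ref{homotopy fibrewise} for the inclusions $j_0,j_1\colon E\to\mathcal{E}$. The inductive reduction in the necessity direction is also sound, granted a small point you leave implicit: the lift of the $M_{n-1}[E]$-homotopy through the fibration $M_n[E]\to M_{n-1}[E]$ has to be taken as a vertical (section) homotopy, which is exactly the homotopy lifting property of $M_n[E]\to M_{n-1}[E]$ applied to the space $B$; that is fine because the paper has already observed this map is a Hurewicz fibration. The composition with Propositions~\ref{composition} and~\ref{inverse} legitimately reduces to the case $\{m_i\}_{i<n}=\{m_i'\}_{i<n}$.

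The reduced case, however, is not proved. You correctly identify it as the crux, but the ``combinatorial fact'' you invoke (``the multiplihedron deformation-retracts onto either end facet compatibly with grafting'') is stated too vaguely to carry the argument and, as stated, is not quite what is needed. The two end facets $\delta_1(1,n)(\mathcal K_n)$ and $\delta(n,1,\dots,1)(\mathcal K_n)$ are disjoint copies of $\mathcal K_n$ inside $\mathcal H_n\cong S^{n-2}$, and a retraction of $\mathcal J_n$ onto one of them by itself does not manufacture a \emph{path of $A_n$-forms}: at intermediate times the restriction of the would-be $m_n^t$ to $\mathcal L_n$ must be the one dictated by the fixed $\{m_i\}_{i<n}$, and it is exactly the presence of the other boundary facets of $\mathcal J_n$ (those coming from $\delta_k(r,t)$ with $r,t\ge 2$, etc.), together with the data $g_2,\dots,g_{n-1}$, that must be traded away. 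This exchange is the actual content of \cite{BV73} Lemma~5.7, which Boardman--Vogt establish by a filtration of the $W$-construction and a cell-by-cell extension argument, not by a single retraction of $\mathcal J_n$. As written, your final step restates the assertion to be proved rather than proving it, so the proposal has a genuine gap precisely at the point you flag as ``the real work.'' Either carry out that extension argument in the fibrewise setting, or do as the paper does and cite \cite{BV73} together with the remark that the argument is cellwise and therefore fibrewise-valid.
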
 
\par 
Recall that there exists the universal fibration (not {\it principal}) $E_1\rightarrow M_1=BFG$ for Hurewicz fibrations with fibres homotopy equivalent to $G$, where $FG$ is the space of all self homotopy equivalences of $G$. 
For any space $B$ with homotopy type of a CW complex, there is the bijection between the free homotopy set $[B;M_1]$ and the set of all fibrewise homotopy equivalence classes of Hurewicz fibrations with fibres homotopy equivalent to $G$ given by pull-back of $E_1$.
Refer \cite{May75} for details.\par
We denote $M_n=M_n(G)=M_n[E_1]$ and $E_n=E_n(G)=E_n[E_1]$. 
\begin{prp}
Let $E$ be a fibrewise $A_n$-space over $B$. 
Then there exists a fibrewise $A_n$-map $f:E\rightarrow E_n$ over $\bar f:B\rightarrow M_n$ such that the induced map $E\rightarrow \bar f^*E_n$ is a fibrewise $A_n$-equivalence.
\end{prp}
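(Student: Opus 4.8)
The plan is to reduce to the tautological case $E=\bar f_1^*E_1$ and then read off $\bar f$ as a lift of the classifying map of the underlying fibration. First I would use the universal fibration $E_1\to M_1=BFG$: since every fibre of $E$ is homotopy equivalent to $G$ and $B,G$ have CW homotopy types, there is a map $\bar f_1:B\to M_1$ and a fibrewise homotopy equivalence $\psi:\bar f_1^*E_1\to E$. Applying Proposition \ref{equivalence1} to $\psi$ — whose target $E$ already carries a fibrewise $A_n$-form — produces a fibrewise $A_n$-form $\{m_i'\}$ on $\bar f_1^*E_1$ for which $\psi$ is a fibrewise $A_n$-equivalence. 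Thus, up to fibrewise $A_n$-equivalence, we may as well work with $(\bar f_1^*E_1,\{m_i'\})$.

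Next I would identify this $A_n$-form with a lift of $\bar f_1$. Because $(\bar f_1^*E_1)_b=(E_1)_{\bar f_1(b)}$ for every $b\in B$, the fibrewise space of $A_n$-forms of the fibres is natural in pull-backs: there is a homeomorphism $M_n[\bar f_1^*E_1]\cong \bar f_1^*M_n$ over $B$ (here I use that $B$ is a CW complex, so the fibrations are locally sliceable and the fibrewise mapping spaces appearing in the definition of $M_n[-]$ behave correctly under pull-back, via the exponential law recalled in \S 2). Under this identification the section of $M_n[\bar f_1^*E_1]\to B$ determined by $\{m_i'\}$ corresponds to a map $\bar f:B\to M_n$ covering $\bar f_1$ along the projection $M_n\to M_1$. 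Since $E_n=E_n[E_1]$ is by definition the pull-back of $E_1$ along $M_n\to M_1$, we get $\bar f^*E_n=\bar f_1^*E_1$ as fibrewise spaces, and the pull-back of the tautological form $\{\tilde m_i\}$ on $E_n$ is exactly $\{m_i'\}$; that is, $\bar f^*E_n$ coincides with $(\bar f_1^*E_1,\{m_i'\})$ as a fibrewise $A_n$-space.

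Finally, $\psi:\bar f^*E_n\to E$ is a fibrewise $A_n$-equivalence, so by Proposition \ref{inverse} a fibrewise homotopy inverse $\bar\psi:E\to\bar f^*E_n$ is again a fibrewise $A_n$-equivalence; composing $\bar\psi$ with the canonical fibrewise $A_n$-map $\bar f^*E_n\to E_n$ over $\bar f$ and using Proposition \ref{composition} gives a fibrewise $A_n$-map $f:E\to E_n$ over $\bar f$ whose induced map $E\to\bar f^*E_n$ is $\bar\psi$, hence a fibrewise $A_n$-equivalence. The hard part will be the naturality assertion $M_n[\bar f_1^*E_1]\cong\bar f_1^*M_n$ together with the compatibility of the tautological $A_n$-forms under this homeomorphism — i.e. verifying that the constructions $M_n[-]$ and $E_n[-]$ genuinely commute with pull-back — which is precisely where the topological hypotheses (CW homotopy types, local sliceability, the exponential law of \S 2) are needed.
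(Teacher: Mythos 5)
Your proof is correct and follows essentially the same path as the paper: classify the underlying fibration by $\bar f_1:B\to M_1$, transfer the $A_n$-form across the fibrewise homotopy equivalence to $\bar f_1^*E_1$ via Propositions~\ref{equivalence1} and~\ref{inverse}, read off the form as a section of $\bar f_1^*M_n=M_n[\bar f_1^*E_1]$, and hence as a lift $\bar f:B\to M_n$. The naturality $M_n[\bar f_1^*E_1]\cong\bar f_1^*M_n$ that you flag as the crux is asserted in the paper without further comment, so your instinct to highlight it is reasonable but the step is the same.
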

\begin{proof}
Let $f_1:E\rightarrow E_1$ be a fibre map covering $\bar f_1:B\rightarrow M_1$ which induces a fibrewise homotopy equivalence $E\rightarrow \bar f_1^*E_1$. 
From Propositions \ref{equivalence1} and \ref{inverse}, there exists a fibrewise $A_n$-form of $\bar f_1^*E_1$ such that $E\rightarrow \bar f_1^*E_1$ is a fibrewise $A_n$-equivalence.
This fibrewise $A_n$-form gives a section of $\bar f_1^*M_n=M_n[\bar f_1^*E_1]\rightarrow B$.
This section and $f_1$ gives a fibrewise $A_n$-map $f:E\rightarrow E_n$ over $\bar f:B\rightarrow M_n$ which induces fibrewise $A_n$-equivalence $E\rightarrow \bar f^*E_n$.
\end{proof}
\begin{lem}\label{single ver}
Let $E$ be a fibrewise $A_n$-space over $B$ and $f,g:B\rightarrow M_n$ be maps such that $f^*E_n$ and $g^*E_n$ are fibrewise $A_n$-equivalent to $E$. Then $f$ and $g$ are homotopic.
\end{lem}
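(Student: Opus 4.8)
The plan is to prove this as the injectivity half of the classification of fibrewise $A_n$-spaces by $M_n$, separating the ``underlying fibration'' part from the ``$A_n$-form'' part. Write $p\colon M_n\to M_1$ for the projection; it is a composite of the Hurewicz fibrations $M_i\to M_{i-1}$ and hence a Hurewicz fibration. Since $f^*E_n$ and $g^*E_n$ are both fibrewise $A_n$-equivalent to $E$, they are fibrewise $A_n$-equivalent to each other (Remark after Proposition \ref{inverse}); fix a fibrewise $A_n$-equivalence $\Phi\colon f^*E_n\to g^*E_n$ covering $1_B$, obtained from the given equivalences via Propositions \ref{composition} and \ref{inverse}. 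Its underlying fibrewise homotopy equivalence goes between $(pf)^*E_1$ and $(pg)^*E_1$, and since $E_1\to M_1$ is universal for Hurewicz fibrations with fibre $\simeq G$ (\cite{May75}), the first step is to invoke the classification theory of fibrations to produce a homotopy $H\colon I\times B\to M_1$ from $pf$ to $pg$ that realizes $\Phi$ as the holonomy of $H^*E_1$ from $0\times B$ to $1\times B$.

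Next I would lift: using the homotopy lifting property of $p$, choose $\tilde H\colon I\times B\to M_n$ with $p\tilde H=H$ and $\tilde H|_{0\times B}=f$, and put $g'=\tilde H|_{1\times B}$, so that $f\simeq g'$ in $M_n$ and $pg'=pg=:\bar g$. It then remains to show $g'\simeq g$. Now $g$ and $g'$ are two sections of the Hurewicz fibration $\bar g^*M_n\to B$, equivalently two fibrewise $A_n$-forms $m,m'$ on the single fibration $\mathcal E:=\bar g^*E_1=g^*E_n=(g')^*E_n$, so by Lemma \ref{class of An str} it suffices to prove that $1_{\mathcal E}$ is a fibrewise $A_n$-equivalence from $(\mathcal E,m)$ to $(\mathcal E,m')$.

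To get this I would apply Proposition \ref{class1} along $\tilde H$: the fibrewise $A_n$-space $\tilde H^*E_n$ over $I\times B$ restricts to $f^*E_n$ over $0\times B$ and to $(\mathcal E,m')$ over $1\times B$, and the fibrewise $A_n$-equivalence $f^*E_n\to(\mathcal E,m')$ produced by that proposition has, as underlying fibrewise map, the holonomy of $H^*E_1$, which by the choice of $H$ is fibrewise homotopic to $\Phi$. Composing with $\Phi^{-1}\colon(\mathcal E,m)=g^*E_n\to f^*E_n$ (Proposition \ref{inverse}) gives a fibrewise $A_n$-equivalence $(\mathcal E,m)\to(\mathcal E,m')$ whose underlying map is fibrewise homotopic to $1_{\mathcal E}$; Proposition \ref{homotopy fibrewise} then upgrades $1_{\mathcal E}$ itself to a fibrewise $A_n$-map, hence a fibrewise $A_n$-equivalence, between the two structures. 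Lemma \ref{class of An str} now yields $m\simeq m'$ as sections, so $g\simeq g'\simeq f$.

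The main obstacle is the first step: one must arrange the homotopy $H$ (and, after lifting, $\tilde H$) so that the holonomy it produces is the \emph{given} $\Phi$ rather than some arbitrary fibrewise self-equivalence of $\mathcal E$ --- i.e. one must match the discrepancy between $f$ and $g$, which a priori live over different (though homotopic) maps to $M_1$, against the prescribed fibrewise $A_n$-equivalence. This is precisely where the classification theory of \cite{May75} for Hurewicz fibrations over a CW base enters non-formally, together with its compatibility with the tower $M_n\to\cdots\to M_1$. An alternative packaging of the same content is to realize the mapping cylinder of $\Phi$ as a fibrewise $A_n$-space over $I\times B$ --- using the multiplihedra as the $W$-construction over the arrow category $0\to1$, cf. the Remark following the definition of fibrewise $A_n$-maps --- transport its fibrewise $A_n$-form onto $c^*E_1$ for a classifying homotopy $c\colon I\times B\to M_1$ of the underlying fibration with $c|_{0\times B}=pf$ and $c|_{1\times B}=pg$, and read the homotopy $f\simeq g$ directly off the resulting section of $c^*M_n\to I\times B$.
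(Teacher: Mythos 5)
Your proposal follows essentially the same route as the paper's own proof: project to $M_1$, produce a homotopy $pf\simeq pg$, lift it through the Hurewicz fibration $p\colon M_n\to M_1$ starting at $f$, and then compare the time-one endpoint with $g$ via Lemma \ref{class of An str} (using Propositions \ref{inverse}, \ref{composition} and \ref{homotopy fibrewise} to turn an existing fibrewise $A_n$-equivalence into the statement that the \emph{identity} of the underlying fibration is one). The step you flag as ``the main obstacle'' --- arranging $H$ and the compatible lift so that the holonomy reproduces the given $\Phi$ rather than some arbitrary fibrewise self-equivalence --- is genuinely necessary, and it is exactly the point at which the paper's proof is terse: the paper picks ``a'' homotopy $h'$ from $pf$ to $pg$ and lifts it, then asserts that the identity map $h_1^*E_n\to g^*E_n$ is a fibrewise $A_n$-equivalence; but this only holds if $h'$ and the lift $\hat h$ are chosen so that $\hat h_1$ and the given classifying fiber map for $g^*E_n$ have the same (or fibrewise homotopic) underlying map into $E_1$. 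That choice is available: by May's theory the space of classifying fiber maps $E\to E_1$ is connected, so one can take a homotopy of classifying maps covering $h'$ and then lift it using $E_n=M_n\times_{M_1}E_1$, which makes the composite $h_1^*E_n\to E\to g^*E_n$ literally the identity. You have correctly located and named this subtlety where the paper glosses over it; your suggested remedy via \cite{May75} is the right one, and your ``alternative packaging'' via the mapping cylinder of $\Phi$ is another legitimate way to carry it out. In short, same approach, with the non-formal ingredient made explicit rather than fully elaborated.
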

\begin{proof}
Let $p:M_n\rightarrow M_1$ be the projection. 
Then $pf$ and $pg$ are homotopic.
Let us denote this homotopy $h':I\times B\rightarrow M_0$. 
Since $p$ is a Hurewicz fibration, there exists a homotopy $h:I\times B\rightarrow M_n$ such that $ph=h'$ and $h_0=f$. 
Take a fibrewise $A_n$-map $\hat f:E\rightarrow E_n$ over $f$ which induces a fibrewise $A_n$-equivalence $E\rightarrow f^*E_n$.
From the homotopy lifting property of $E_n$, there exists a homotopy $\hat h:I\times E\rightarrow E_n$ such that $\hat h$ covers $h$ and $\hat h_0=\hat f$.
Here $E$ is fibrewise $A_n$-equivalent to both $g^*E_n$ and $h_1^*E_n$.
Since $ph_1=g$, they are the same fibrewise spaces.
From Propositions \ref{inverse}, \ref{composition} and \ref{homotopy fibrewise}, the identity map $h_1^*E_n\rightarrow g^*E_n$ is a fibrewise $A_n$-equivalence and Lemma \ref{class of An str} says that $h_1$ is homotopic to $g$. 
Therefore, $f$ and $g$ are homotopic.
\end{proof}
\par The next proposition follows from this lemma immediately.
\begin{prp}\label{class2}
Let $E$ and $E'$ be fibrewise $A_n$-equivalent fibrewise $A_n$-spaces over $B$ and $f,g:B\rightarrow M_n$ be maps.
If $E$ and $E'$ are fibrewise $A_n$-equivalent to $f^*E_n$ and $g^*E_n$ respectively, then $f$ and $g$ are homotopic.
\end{prp}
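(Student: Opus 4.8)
The plan is to deduce this proposition from Lemma \ref{single ver} by exploiting that fibrewise $A_n$-equivalence is an equivalence relation on fibrewise $A_n$-spaces over $B$. Recall that this was noted in the Remark following Proposition \ref{inverse}: the composite of two fibrewise $A_n$-equivalences is again a fibrewise $A_n$-equivalence, since a composite of fibrewise $A_n$-maps is a fibrewise $A_n$-map by Proposition \ref{composition} and a composite of fibrewise homotopy equivalences is a fibrewise homotopy equivalence; and a fibrewise homotopy inverse of a fibrewise $A_n$-equivalence is itself a fibrewise $A_n$-equivalence by Proposition \ref{inverse}. So symmetry and transitivity are both available.

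With this in hand, the argument is short. By hypothesis $E$ is fibrewise $A_n$-equivalent to $f^*E_n$, $E'$ is fibrewise $A_n$-equivalent to $g^*E_n$, and $E$ is fibrewise $A_n$-equivalent to $E'$. Chaining these three equivalences (using symmetry on the second and then transitivity) shows that $E$ is fibrewise $A_n$-equivalent to $g^*E_n$ as well. Hence $E$ is a single fibrewise $A_n$-space over $B$ that is fibrewise $A_n$-equivalent to both $f^*E_n$ and $g^*E_n$, and Lemma \ref{single ver}, applied to this $E$, immediately gives that $f$ and $g$ are homotopic.

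I do not expect any real obstacle here: the only point needing attention is the legitimacy of composing and inverting the three given fibrewise $A_n$-equivalences so as to produce one between $E$ and $g^*E_n$, and that is precisely what Propositions \ref{composition} and \ref{inverse} supply. Once that reduction is made, the statement is a purely formal consequence of Lemma \ref{single ver}, requiring no further lifting or extension arguments beyond those already used to prove that lemma.
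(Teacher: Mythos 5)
Your proposal is correct and matches the paper's intent exactly: the paper simply remarks that Proposition \ref{class2} ``follows from this lemma immediately,'' and the chain you describe --- using Propositions \ref{composition} and \ref{inverse} to establish symmetry and transitivity of fibrewise $A_n$-equivalence, thereby reducing the statement to Lemma \ref{single ver} with the single space $E$ --- is precisely the intended (and only natural) route.
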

\par
From Proposition \ref{class1} and Proposition \ref{class2}, we conclude the classification theorem.
\begin{thm}[(the classification theorem for fibrewise $A_n$-spaces)]\label{classification}
Let $G$ be an $A_n$-space and $B$ be a space, both of which have homotopy types of CW complexes.
Fix a finite positive integer $n$.
Let us denote $A_n(G;B)$ the set of all fibrewise $A_n$-equivalent classes of fibrewise $A_n$-spaces over $B$ with fibres $A_n$-equivalent to $G$. 
Then the map $[B;M_n(G)]\rightarrow A_n(G;B)$ defined by pull-back of $E_n$ is well-defined and bijective.
\end{thm}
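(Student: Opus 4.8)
The plan is to assemble the classification theorem from the structural results already proved, treating the map $[B;M_n(G)]\to A_n(G;B)$, $[f]\mapsto [f^*E_n]$, by checking three things in turn: well-definedness, surjectivity, and injectivity.

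First I would check well-definedness. Given a map $f:B\to M_n(G)$, the pull-back $f^*E_n$ is a fibrewise $A_n$-space over $B$ (pull-backs of fibrewise $A_n$-spaces are again fibrewise $A_n$-spaces, as recorded at the start of \S5), and each of its fibres is, by construction of $M_n=M_n[E_1]$, equipped with an $A_n$-form making it $A_n$-equivalent to $G$. So $[f^*E_n]$ indeed lies in $A_n(G;B)$. To see that the class depends only on the homotopy class of $f$: if $f\simeq g$, then Proposition~\ref{class1} gives that $f^*E_n$ and $g^*E_n$ are fibrewise $A_n$-equivalent, so $[f^*E_n]=[g^*E_n]$ in $A_n(G;B)$. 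This handles the passage from $[B;M_n(G)]$ to $A_n(G;B)$.

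Next, surjectivity. Let $E$ be any fibrewise $A_n$-space over $B$ with fibres $A_n$-equivalent to $G$. By the Proposition immediately preceding Lemma~\ref{single ver} (the one producing a fibrewise $A_n$-map $f:E\to E_n$ over some $\bar f:B\to M_n$ inducing a fibrewise $A_n$-equivalence $E\to\bar f^*E_n$), there is a map $\bar f:B\to M_n(G)$ with $E$ fibrewise $A_n$-equivalent to $\bar f^*E_n$, i.e. $[E]=[\bar f^*E_n]$ is in the image. Finally, injectivity is exactly Proposition~\ref{class2}: if $f,g:B\to M_n(G)$ satisfy that $f^*E_n$ and $g^*E_n$ are fibrewise $A_n$-equivalent, then, taking $E=E'=f^*E_n$ in that proposition (which is fibrewise $A_n$-equivalent to both $f^*E_n$ and $g^*E_n$ by hypothesis), we conclude $f\simeq g$, so $[f]=[g]$ in $[B;M_n(G)]$.

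Assembling these three points gives the theorem. I do not expect a serious obstacle here, since all the real work — the fibrewise $W$-construction machinery behind Propositions~\ref{composition}, \ref{equivalence1}, \ref{inverse}, the homotopy-lifting argument of Proposition~\ref{homotopy fibrewise}, and the ``change of $A_n$-form'' Lemma~\ref{class of An str} underlying Lemma~\ref{single ver} and Proposition~\ref{class2} — has already been carried out. The one point to be careful about is matching hypotheses: the classification requires $G$ and $B$ to have the homotopy type of CW complexes (so that the universal fibration $E_1\to M_1=BFG$ and the bijection $[B;M_1]\leftrightarrow$ fibrewise homotopy equivalence classes are available, and so that $M_n[E]\to M_{n-1}[E]$ is a Hurewicz fibration with the stated fibre), and $n$ must be finite (used implicitly so that $M_n(G)$ is built from finitely many stages, each a pull-back along an honest Hurewicz fibration). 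Once these standing assumptions are in force, the proof is the formal three-step argument above.
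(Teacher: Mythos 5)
Your proposal is correct and follows essentially the same route as the paper: well-definedness from Proposition~\ref{class1}, surjectivity from the (unnamed) proposition preceding Lemma~\ref{single ver} producing a classifying map $\bar f:B\to M_n$ with $E\simeq\bar f^*E_n$ as fibrewise $A_n$-spaces, and injectivity from Proposition~\ref{class2}. The paper simply cites Propositions~\ref{class1} and~\ref{class2} and leaves the three-step assembly implicit, so your write-up is just a more explicit version of the same argument.
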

\par
We will construct the {\it associated principal fibration} of this universal fibration for our later calculation. 
Let $C_n$ be the fibrewise space over $M_n$ defined as the space consisting of all $A_n$-equivalences with its $A_n$-form from $G$ to some fibre of $E_n$:
\begin{align*}
C_n=\coprod_{b\in M_n}\left. \left\{\, \{f_i\} \in \prod_{i=1}^n \Map(\mathcal{J}_i\times G^{\times i},(E_n)_b)\, \right| 
\begin{array}{l}
\{f_i\}:{\rm an\,}A_n{\rm \mathchar`-form\, of\, an\,}A_n{\rm \mathchar`-equivalence}\\
{\rm from\,} G{\rm \, to\,}(E_n)_b 
\end{array}
\right\}.
\end{align*} 
As easily checked, the projection $C_n\rightarrow M_n$ is a Hurewicz fibration.
The fibres of $C_n\rightarrow M_n$ are homotopy equivalent to the space $F_{A_n}G$ consisting of all $A_n$-forms of self $A_n$-equivalences of $G$. 
It follows from the following proposition that $C_n$ is $\infty$-connected. 
\begin{prp}
Let $(G,\{m_i\}_{i=2}^n)$ be an $A_n$-space, $(F,\{\mu_i\}_{i=2}^{n-1})$ an $A_{n-1}$-space and $(f,\{f_i\}_{i=1}^{n-1}):G\to F$ an $A_{n-1}$-equivalence.
Then the space
\begin{align*}
\varPhi =\{ \, (\mu _n,f_n)\, |\, (f,\{f_i\}_{i=1}^n):(G,\{m_i\}_{i=2}^n)\to (F,\{\mu_i\}_{i=2}^n)\,{\rm is\, an}\,A_n{\rm \mathchar`-equivalence.}\, \}\\
\subset \Map (\mathcal{K}_n\times F^{\times n},F)\times \Map (\mathcal{J}_n\times G^{\times n},F)
\end{align*}
is contractible.
\end{prp}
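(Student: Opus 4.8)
\emph{Plan.} The idea is to unwind the definitions until $\varPhi$ becomes the space of extensions of a fixed map along a trivial cofibration, and then quote the standard fact that such a space is contractible. First I would read off, from the defining relations of a fibrewise $A_n$-form and a fibrewise $A_n$-map together with the grafting identities of \S 3, which parts of the boundaries of $\mathcal{K}_n$ and $\mathcal{J}_n$ force the values of $\mu_n$ and $f_n$. The subcomplex $\mathcal{L}_n\subset\mathcal{K}_n$ is the union of the images of the $\partial_k(r,t)$ with $r+t-1=n$, and on $\mathcal{L}_n\times F^{\times n}$ the relation for $m_n$ forces $\mu_n$ to agree with a fixed map $\beta$ assembled from $\mu_2,\dots,\mu_{n-1}$. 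The subcomplex $\mathcal{H}_n\subset\mathcal{J}_n$ is the union of the images of the grafting maps $\delta_k(r,t)$ ($r+t-1=n$) and $\delta(t,r_1,\dots,r_t)$ ($r_1+\cdots+r_t=n$); among these faces the unique one involving the unknown $\mu_n$ is the facet $A$, the image of $\delta(n,1,\dots,1)\colon\mathcal{K}_n\to\mathcal{J}_n$ (using $\mathcal{J}_1$ is a point and $f_1=f$), on which the relation reads $f_n=\mu_n\circ(1\times f^{\times n})$, while on the union $B$ of the remaining faces of $\mathcal{H}_n$ the relations force $f_n$ to agree with a fixed map $\gamma$ assembled from the $m_i$, the $\mu_i$ with $i<n$, and the $f_i$ with $i<n$. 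Since $\mathcal{J}_n\cong D^{n-1}$ is a polytope, $A$ is one of its facets and $\delta(n,1,\dots,1)$ identifies $\mathcal{K}_n$ with $A$, so $B\cong D^{n-2}$ and $A\cap B\cong\partial\mathcal{K}_n=\mathcal{L}_n$. Now form the pushout
\begin{align*}
W=(\mathcal{K}_n\times F^{\times n})\cup_{\mathcal{K}_n\times G^{\times n}}(\mathcal{J}_n\times G^{\times n}),
\end{align*}
glued by $1\times f^{\times n}$ on one side and by the facet inclusion $A\times G^{\times n}\hookrightarrow\mathcal{J}_n\times G^{\times n}$ on the other, and set $W_0=(\mathcal{L}_n\times F^{\times n})\cup(B\times G^{\times n})\subset W$. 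Sending $(\mu_n,f_n)$ to the map $W\to F$ which is $\mu_n$ on $\mathcal{K}_n\times F^{\times n}$ and $f_n$ on $\mathcal{J}_n\times G^{\times n}$ identifies $\varPhi$ with the space of maps $W\to F$ restricting on $W_0$ to the fixed map $\phi_0$ given by $\beta$ and $\gamma$. The one genuine check here is that $\phi_0$ is well defined, i.e.\ that $\beta$ and $\gamma$ agree on $\mathcal{L}_n\times G^{\times n}=(A\cap B)\times G^{\times n}$; this is precisely the coherence expressing that $(f,\{f_i\}_{i\le n-1})$ is an $A_{n-1}$-form of an $A_{n-1}$-map $(G,\{m_i\})\to(F,\{\mu_i\})$.

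Next I would show that $W_0\hookrightarrow W$ is a Hurewicz cofibration and a homotopy equivalence. Both pushouts are taken along Hurewicz cofibrations (the facet inclusions $A\times G^{\times n}\hookrightarrow\mathcal{J}_n\times G^{\times n}$ and $(A\cap B)\times G^{\times n}\hookrightarrow B\times G^{\times n}$), hence are homotopy pushouts and $W_0\hookrightarrow W$ is a Hurewicz cofibration. Because $f$ is an $A_{n-1}$-equivalence, $f^{\times n}\colon G^{\times n}\to F^{\times n}$ is a homotopy equivalence, so in each pushout the leg of the form $(-)\times G^{\times n}\to(-)\times F^{\times n}$ is a homotopy equivalence; therefore the complementary inclusions $\mathcal{J}_n\times G^{\times n}\hookrightarrow W$ and $B\times G^{\times n}\hookrightarrow W_0$ are homotopy equivalences. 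As $B\hookrightarrow\mathcal{J}_n$ is a homotopy equivalence ($B$ and $\mathcal{J}_n$ are disks), the composite $B\times G^{\times n}\hookrightarrow\mathcal{J}_n\times G^{\times n}\hookrightarrow W$ is a homotopy equivalence, and it factors as $B\times G^{\times n}\hookrightarrow W_0\hookrightarrow W$; by the two-out-of-three property, $W_0\hookrightarrow W$ is a homotopy equivalence.

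Finally, a map that is simultaneously a Hurewicz cofibration and a homotopy equivalence has the left lifting property against every Hurewicz fibration, and for a fixed such fibration the space of lifts of a fixed commuting square is contractible. Applying this to the fibration $F\to\ast$ and the square determined by $\phi_0$ shows that $\varPhi$, being exactly the space of extensions of $\phi_0$ to $W$, is contractible; in particular it is nonempty, so the given $A_{n-1}$-equivalence does extend to an $A_n$-equivalence.

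I expect the first paragraph to be the real obstacle: correctly matching the combinatorics of the boundaries $\mathcal{L}_n\subset\mathcal{K}_n$ and $\mathcal{H}_n\subset\mathcal{J}_n$ with the defining relations — in particular isolating the single facet of $\mathcal{J}_n$ on which $f_n$ genuinely involves $\mu_n$ — and verifying the compatibility of $\phi_0$ on the overlap. The remaining steps are routine manipulations with homotopy pushouts, Hurewicz cofibrations and mapping spaces.
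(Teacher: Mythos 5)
Your argument is correct, and it takes a genuinely different route from the paper's. The paper's proof stays entirely inside the Boardman--Vogt formalism: it builds the tautological fibrewise $A_n$-space $\varPsi=\varPhi\times F$ over $\varPhi$ with the ``universal'' pair $(\mu_n,f_n)$ on the fibre at $(\mu_n,f_n)$, compares the resulting fibrewise $Q^nW(\mathcal{A}\otimes\mathcal{L}_1)$-map $\hat f$ with the constant one $\hat{f^0}$ at a fixed basepoint $(\mu_n^0,f_n^0)$ via the identity $\varPsi\to\varPhi\times F$, and invokes Lemma 5.7 of \cite{BV73} to conclude they are homotopic, extracting the contraction of $\varPhi$ from that homotopy. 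Your proof instead unwinds the definitions directly: you isolate the unique facet $A=\delta(n,1,\dots,1)(\mathcal{K}_n)$ of $\mathcal{J}_n$ on which $f_n$ genuinely couples to $\mu_n$, package $(\mu_n,f_n)$ as a single map $W\to F$ out of the pushout $W=(\mathcal{K}_n\times F^{\times n})\cup_{\mathcal{K}_n\times G^{\times n}}(\mathcal{J}_n\times G^{\times n})$, and identify $\varPhi$ with the space of extensions over $W$ of the boundary data $\phi_0$ on $W_0=(\mathcal{L}_n\times F^{\times n})\cup_{\mathcal{L}_n\times G^{\times n}}(B\times G^{\times n})$; the contractibility then follows because $W_0\hookrightarrow W$ is a trivial Hurewicz cofibration and $F\to *$ is a fibration. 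Your approach is considerably more transparent and does not depend on the reader unpacking the notion of fibrewise $Q^n(\mathcal{A}\otimes\mathcal{L}_1\otimes\mathcal{L}_1)$-maps; its cost is precisely the pair of combinatorial checks you flag, namely that $A$ is a facet of the polytope $\mathcal{J}_n$ with $B\cong D^{n-2}$ and $A\cap B\cong\mathcal{L}_n$, and that $\beta\circ(1\times f^{\times n})$ and $\gamma$ agree on $(A\cap B)\times G^{\times n}$, the latter being exactly the coherence of the given $A_{n-1}$-data. Both of those checks do go through (the facet structure is part of the construction in \S 3, and the overlap compatibility is the multiplihedron coherence relation), so the argument is complete; you might only want to spell out why $W_0\hookrightarrow W$ is a Hurewicz cofibration (e.g.\ by noting that the pair $(\mathcal{J}_n\times G^{\times n},\,A\times G^{\times n}\cup B\times G^{\times n})$ is an NDR pair and that this property is preserved by the pushout along the cofibration $A\times G^{\times n}\hookrightarrow\mathcal{K}_n\times F^{\times n}$), since this is the one place where the ``routine'' part of your argument uses a slightly nonstandard fact about induced maps of pushouts.
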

\begin{proof}
For a fibrewise $Q^n W\mathcal{B}$-space $X,Y:Q^nW\mathcal{B}\to {\bf FWCG}$, we say $g:Q^nW(\mathcal{B}\otimes \mathcal{L}_1)\to {\bf FWCG}$ is a {\it fibrewise $Q^n\mathcal{B}$-map} from $X$ to $Y$ if $X=d^1f$ and $Y=d^0f$, where {\bf FWCG} denotes the category of fibrewise compactly generated spaces.
As remarked in \S 4, similar results in Chapter IV of \cite{BV73} hold for fibrewise $Q^n\mathcal{B}$-maps.
Let $\varPsi =\varPhi\times F$ be a fibrewise $A_n$-space over $\varPhi$ such that the $A_n$-form of the fibre $\varPsi_{(\mu_n,f_n)}=\{(\mu_n,f_n)\}\times F$ is $\{\mu_i\}_{i=2}^n$.
Then $1\times f:\varPhi\times (G,\{m_i\}_{i=2}^n)\to \varPsi$ is a fibrewise $A_n$-equivalence such that the $A_n$-form of $f_{(\mu_n,f_n)}:(G,\{m_i\}_{i=2}^n)\to \varPsi_{(\mu_n,f_n)}$ is $\{f_i\}_{i=1}^n$.
This fibrewise $A_n$-map $1\times f:\varPhi\times (G,\{m_i\}_{i=2}^n)\to \varPsi$ extends to a fibrewise $Q^n\mathcal{A}$-map $\hat f:Q^nW(\mathcal{A}\otimes \mathcal{L}_1)\to {\bf FWCG}$.
If we fix a point $(\mu_n^0,f_n^0)\in \varPhi$, the fibrewise map $1\times (f,\{f_1,\cdots ,f_{n-1},f_n^0\}):\varPhi \times (G,\{m_i\}_{i=2}^n)\to \varPhi \times (F,\{\mu_2,\cdots ,\mu_{n-1},\mu_n^0\})$ is a fibrewise $A_n$-equivalence and extends to a fibrewise $Q^n\mathcal{A}$-map $\hat {f^0}:Q^nW(\mathcal{A}\otimes \mathcal{L}_1)\to {\bf FWCG}$.
Then the indentity map $\varPsi \to \varPhi \times F$ gives a fibrewise $Q^n\mathcal{A}$-map $Q^nW(\mathcal{A}\otimes \mathcal{L}_1)\to {\bf FWCG}$.
\begin{align*}
\xymatrix{
\varPhi\times (G,\{m_i\}_{i=2}^n) \ar[d]^{\rm id} \ar[r]^-{\hat f} & \varPsi \ar[d] \\
\varPhi\times (G,\{m_i\}_{i=2}^n) \ar[r]^-{\hat{f^0}} & \varPhi \times (F,\{\mu_2,\cdots ,\mu_{n-1},\mu_n^0\})\\
}
\end{align*}
The above diagram commutes in the sense of the section 2 in Chapter IV of \cite{BV73}.
Then we can construct a fibrewise $Q^n(\mathcal{A}\otimes \mathcal{L}_1)$-map $h:Q^nW(\mathcal{A}\otimes \mathcal{L}_1\otimes \mathcal{L}_1)\to {\bf FWCG}$ from $\hat f$ to $\hat{f^0}$ whose underlying map is the identity map.
Thus, from Lemma 5.7 in \cite{BV73}, $\hat f,\hat {f^0}:Q^nW(\mathcal{A}\otimes \mathcal{L}_1)\to {\bf FWCG}$ are homotopic.
This implies that the space $\varPhi$ is contractible to a point $(\mu_n^0,f_n^0)$.
\end{proof}

\section{Fibrewise localization of fibrewise $A_n$-spaces}
We introduce the fibrewise $\mathcal{P}$-localization of fibrewise $A_n$-spaces.
In the following, notice our fibrewise spaces are Hurewicz fibrations.
\begin{dfn}
Let $E$ and $\bar E$ be fibrewise spaces over $B$ and $\mathcal{P}$ be a family of prime numbers. 
A fibrewise map $\ell:E\rightarrow \bar E$ is a {\it fibrewise $\mathcal{P}$-localization} if the restriction $\ell_b:E_b\rightarrow \bar E_b$ of $\ell$ to each fibre $E_b$ is a $\mathcal{P}$-localization. 
We say a fibrewise space is {\it fibrewise $\mathcal{P}$-local} if each of its fibres is $\mathcal{P}$-local.
\end{dfn}
\begin{rem}
Fibrewise localizations have the following universal property:
if $E'$ is a fibrewise $\mathcal{P}$-local space and $f:E\rightarrow E'$ is a fibrewise map, then there exists a fibrewise map $\bar f:\bar E\rightarrow E'$, unique up to fibrewise homotopy, such that $\bar f\ell$ is fibrewise homotopic to $f$, where the base space of $E$, $\bar E$ and $E'$ has homotopy type of a CW complex.
\end{rem}
\par
We quote Theorem 4.1 of \cite{May80} in the next form.
\begin{thm}
Let $G$ be a nilpotent finite CW complex and $\ell:G\rightarrow G_\mathcal{P}$ be the $\mathcal{P}$-localization of $G$. 
Then, $\ell$ induces a map $\ell_\# :BFG\rightarrow BFG_\mathcal{P}$ which corresponds to the map $FG\rightarrow FG_\mathcal{P}$ induced by $\ell$, which $\mathcal{P}$-localizes the identity component.
\end{thm}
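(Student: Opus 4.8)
The plan is to reconstruct the statement from the existence and naturality of fibrewise $\mathcal{P}$-localization together with the bar-construction model of the universal fibration (as in \cite{May75}). Since $G$ is a nilpotent finite CW complex, $G_{\mathcal{P}}$ again has the homotopy type of a nilpotent CW complex, so $BFG_{\mathcal{P}}$ is defined and classifies Hurewicz fibrations with fibre of the homotopy type of $G_{\mathcal{P}}$. First I would apply the fibrewise $\mathcal{P}$-localization of \cite{May80} to the universal fibration $E_1\to BFG$ (legitimate because the base has the homotopy type of a CW complex and the fibre is nilpotent finite); this produces a fibrewise $\mathcal{P}$-localization $E_1\to\bar E_1$ over $BFG$, with $\bar E_1\to BFG$ a Hurewicz fibration whose fibres are $\mathcal{P}$-local of the homotopy type of $G_{\mathcal{P}}$. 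Classifying $\bar E_1$ yields a map $\ell_{\#}\colon BFG\to BFG_{\mathcal{P}}$, unique up to homotopy, characterised, up to homotopy, by the requirement that its pullback of the universal fibration with fibre $G_{\mathcal{P}}$ be fibrewise equivalent to $\bar E_1$.

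Next I would identify $\ell_{\#}$ with the map induced by $\ell$ on monoids of self-equivalences. One may take $E_1\to BFG$ to be the two-sided bar construction $B(*,FG,G)\to B(*,FG,*)=BFG$, where $FG$ is a topological monoid under composition acting on $G$ by evaluation, and similarly for $G_{\mathcal{P}}$. The universal property of $\mathcal{P}$-localization assigns to each $\varphi\in FG$ a self-equivalence $\bar\varphi\in FG_{\mathcal{P}}$, unique up to homotopy, with $\bar\varphi\,\ell\simeq\ell\,\varphi$; after a standard rigidification (replacing $FG$ by a homotopy-equivalent topological monoid of maps commuting strictly with $\ell$) this assembles into a monoid map $\lambda\colon FG\to FG_{\mathcal{P}}$ along which $\ell$ is equivariant. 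The induced map $B(*,FG,G)\to B(*,FG_{\mathcal{P}},G_{\mathcal{P}})$ covers $B\lambda$ and restricts to $\ell$ on each fibre, hence factors through $(B\lambda)^{*}B(*,FG_{\mathcal{P}},G_{\mathcal{P}})$ as a fibrewise $\mathcal{P}$-localization over $BFG$; by uniqueness $\ell_{\#}\simeq B\lambda$, and $\lambda$ is the asserted map $FG\to FG_{\mathcal{P}}$.

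It remains to see that $\lambda$ restricts to a $\mathcal{P}$-localization on the identity component $F_1 G$ of $FG$. Since the self-equivalences form a union of path components of the full mapping space, $F_1 G=\mathrm{Map}(G,G)_{\mathrm{id}}$ and $F_1 G_{\mathcal{P}}=\mathrm{Map}(G_{\mathcal{P}},G_{\mathcal{P}})_{\mathrm{id}}$, and on these components $\lambda$ is exactly ``localization of a self-map''. The relation $\bar\varphi\,\ell\simeq\ell\,\varphi$ makes the square with edges $\ell_{*}\colon\mathrm{Map}(G,G)_{\mathrm{id}}\to\mathrm{Map}(G,G_{\mathcal{P}})_{[\ell]}$ (post-composition) and $\ell^{*}\colon\mathrm{Map}(G_{\mathcal{P}},G_{\mathcal{P}})_{\mathrm{id}}\to\mathrm{Map}(G,G_{\mathcal{P}})_{[\ell]}$ (pre-composition) homotopy-commutative. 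Here $\ell^{*}$ is a homotopy equivalence by the universal property of localization applied to mapping spaces into the $\mathcal{P}$-local space $G_{\mathcal{P}}$, while $\ell_{*}$ is a $\mathcal{P}$-localization because, $G$ being finite nilpotent, $\mathrm{Map}(G,G)_{\mathrm{id}}$ is nilpotent and its $\mathcal{P}$-localization is the corresponding component of $\mathrm{Map}(G,G_{\mathcal{P}})$ (Hilton--Mislin--Roitberg). Therefore $\lambda|_{F_1 G}$ is homotopic to $\ell_{*}$ followed by a homotopy inverse of $\ell^{*}$, hence is a $\mathcal{P}$-localization, which completes the argument.

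The step I expect to be the main obstacle is the rigidification in the second paragraph: promoting the homotopy-coherent assignment $\varphi\mapsto\bar\varphi$ to an honest monoid map equivariant for $\ell$, so that the bar construction genuinely computes the fibrewise localization. One can sidestep it by keeping $\ell_{\#}$ defined only up to homotopy and instead comparing fibre-transport (holonomy) actions directly, via the universal property of fibrewise localization and naturality of the classification of fibrations; the homotopy-level conclusion about $\ell_{\#}$ is all that is needed, and the real content concerning the identity component is carried by the function-complex localization facts used in the third paragraph, which in turn depend essentially on the finiteness of $G$.
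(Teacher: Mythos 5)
The paper does not prove this statement; it is quoted verbatim as Theorem~4.1 of \cite{May80}, so there is no in-paper proof for your argument to be compared against. Taken on its own terms, your sketch captures the right ideas in roughly the right order: existence of $\ell_{\#}$ by classifying the fibrewise localization of the universal fibration, identification with a map coming from the self-equivalence monoids, and a mapping-space argument via Hilton--Mislin--Roitberg for the identity-component claim. This is plausibly close to the spirit of May's development, in which the fibrewise localization construction precedes and supplies the classifying-space statement, so there is no circularity in invoking it.

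I would, however, be more pessimistic than you are about the rigidification in your second paragraph. The remedy ``replacing $FG$ by a homotopy-equivalent topological monoid of maps commuting strictly with $\ell$'' is not a standard move as stated: the honest submonoid of $FG$ strictly commuting with $\ell$ has no reason to be equivalent to $FG$ (it can even be essentially trivial), and the natural enlargement --- the space of triples $(\varphi,\bar\varphi,H)$ with $H$ a homotopy $\bar\varphi\ell\simeq\ell\varphi$ --- composes only up to higher coherence, so turning it into a strict topological monoid is itself a Moore-style or $A_{\infty}$ rigidification problem. The parenthetical is thus hiding the difficulty rather than resolving it. Your proposed sidestep is the correct route: define $\ell_{\#}$ once and for all as the classifying map of the fibrewise localization $\bar E_{1}$, and compare fibre transport over a loop of $BFG$ using the universal property of fibrewise localization and the uniqueness of the classifying map, never needing a strict $\lambda$. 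With that substitution, the argument is sound: the third paragraph ($\ell^{*}$ an equivalence because $G_{\mathcal{P}}$ is $\mathcal{P}$-local; $\ell_{*}$ a $\mathcal{P}$-localization of the identity component by the function-complex results for nilpotent finite complexes) carries the real content and survives unchanged.
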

\par
Let $G$ be a nilpotent finite CW complex and $E_1$ be a universal fibration of $G$. 
Then, we can construct the fibrewise $\mathcal{P}$-localization $E_1\rightarrow \bar E_1$ of $E_1$, where the classifying map of $\bar E_1$ is $\ell_\#$. 
Hence if $E$ is a fibrewise space with fibres homotopy equivalent to $G$ and with base $B$ homotopy equivalent to a CW complex, then we can take the fibrewise $\mathcal{P}$-localization $\ell:E\rightarrow \bar E$ of $E$. 
\begin{prp}\label{localization}
Let $E$ be a fibrewise $A_n$-space.
Then the fibrewise $\mathcal{P}$-localization $\ell :E\rightarrow \bar E$ has the fibrewise $A_n$-forms of $\bar E$ and $\ell$, both of which are unique up to homotopy through fibrewise $A_n$-forms.
For an $A_n$-space $G$ with homotopy type of a nilpotent finite complex, this fibrewise localization induces the map $M_n(G)\rightarrow M_n(G_\mathcal{P})$ and the following diagrams commute up to homotopy:
\begin{align*}
\xymatrix{
M_n(G) \ar[r] \ar[d] & \cdots \ar[r] & M_2(G) \ar[r] \ar[d] & BFG \ar[d]^{\ell_\#} \\
M_n(G_\mathcal{P}) \ar[r] & \cdots \ar[r] & M_2(G_\mathcal{P}) \ar[r] & BFG_\mathcal{P}\\
}
\end{align*}
\begin{align*}
\xymatrix{
\varOmega^{n-2}_0\Map(G^{\times n},G) \ar[r] \ar[d] & M_n(G) \ar[r] \ar[d] & M_{n-1}(G) \ar[d] \\
\varOmega^{n-2}_0\Map(G_\mathcal{P}^{\times n},G_\mathcal{P}) \ar[r] & M_n(G_\mathcal{P}) \ar[r] & M_{n-1}(G_\mathcal{P})\\
}
\end{align*}
where the map $\varOmega^{n-2}_0\Map(G^{\times n},G)\rightarrow \varOmega^{n-2}_0\Map(G_\mathcal{P}^{\times n},G_\mathcal{P})$ is the $\mathcal{P}$-localization.
\end{prp}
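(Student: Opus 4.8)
The plan is to run an induction on $n$ that produces the fibrewise $A_n$-form of $\bar E$ and the fibrewise $A_n$-form of $\ell$ simultaneously, and then to feed the result into Theorem~\ref{classification}. Throughout I would use two elementary consequences of the fibres of $E$ being nilpotent finite complexes. First, since finite products commute with $\mathcal{P}$-localization and each associahedron $\mathcal{K}_i$ and multiplihedron $\mathcal{J}_i$ is contractible, the fibrewise maps $\ell^{\times_B i}\colon E^{\times_B i}\to\bar E^{\times_B i}$ and $1\times\ell^{\times_B i}$ on $\mathcal{K}_i\times E^{\times_B i}$ and on $\mathcal{J}_i\times E^{\times_B i}$ are again fibrewise $\mathcal{P}$-localizations. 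Second, for a fibrewise $\mathcal{P}$-local space $Z$ over $B$ and a fibrewise $\mathcal{P}$-localization $W\to\bar W$ with $W$ finite over a CW base, precomposition $\map_B(\bar W,Z)\to\map_B(W,Z)$ is a fibrewise homotopy equivalence — fibrewise this is the standard fact that precomposition with the localization $X\to X_\mathcal{P}$ induces a weak equivalence $\Map(X_\mathcal{P},Z_b)\to\Map(X,Z_b)$ for $X$ a finite complex and $Z_b$ a $\mathcal{P}$-local nilpotent space — so in particular $[\bar W,Z]_B\to[W,Z]_B$ is a bijection.

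For the inductive step I would assume $\{\bar m_j\}_{j<i}$ is a fibrewise $A_{i-1}$-form of $\bar E$ and $\{\bar f_j\}_{j<i}$, with $\bar f_1=\ell$, is a fibrewise $A_{i-1}$-form of $\ell$, chosen moreover so that each $\bar f_j$ is fibrewise homotopic to $\ell\circ g_j$, where $g_j\colon\mathcal{J}_j\times E^{\times_B j}\to E$ is the fibrewise $A_j$-map form of the identity map of $E$ obtained by forgetting the paintings of trees (it exists because $E$ is a fibrewise $A_n$-space, cf.\ Example~\ref{example1}(iv)). To build $\bar m_i$ I would factor $\ell\circ m_i\colon\mathcal{K}_i\times E^{\times_B i}\to\bar E$ through the fibrewise localization $\mathcal{K}_i\times\bar E^{\times_B i}$; since $\bar E$ is fibrewise $\mathcal{P}$-local this yields a fibrewise map $\bar m_i$ together with a fibrewise homotopy $\Theta_i\colon\bar m_i\circ(1\times\ell^{\times_B i})\simeq\ell\circ m_i$. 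On $\mathcal{L}_i\times\bar E^{\times_B i}$ the map $\bar m_i$ and the composite prescribed by $\{\bar m_j\}_{j<i}$ become fibrewise homotopic after precomposition with $\ell^{\times_B i}$ — both become homotopic to $\ell\circ m_i|_{\mathcal{L}_i}$, using $\Theta_i$, the lower $\Theta_j$, and the $A_i$-relation satisfied by $\{m_j\}$ — hence by the bijection above they are fibrewise homotopic already on $\mathcal{L}_i\times\bar E^{\times_B i}$, and the fibrewise homotopy extension property for the cofibration $\mathcal{L}_i\hookrightarrow\mathcal{K}_i$ lets me adjust $\bar m_i$ and $\Theta_i$ so that the $A_i$-form relation holds strictly on the boundary. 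This produces the fibrewise $A_i$-form $\{\bar m_j\}_{j\le i}$ of $\bar E$.

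Making $\ell$ a fibrewise $A_i$-map requires extending the map prescribed on $\mathcal{H}_i\times E^{\times_B i}$ by $\{\bar m_j\}_{j\le i}$, $\{m_j\}$ and $\{\bar f_j\}_{j<i}$ over $\mathcal{J}_i\times E^{\times_B i}$, and this is the step I expect to be the main obstacle. The idea is to show the prescribed boundary map is fibrewise homotopic to $\ell\circ(g_i|_{\mathcal{H}_i\times E^{\times_B i}})$. On a facet of $\mathcal{H}_i$ of $\delta_k$-type one uses the inductive homotopy $\bar f_r\simeq\ell g_r$ and the $A_i$-relation for $g_i$; on a facet of $\delta$-type one uses $\bar f_{r_s}\simeq\ell g_{r_s}$ and then $\Theta_t$ to rewrite $\bar m_t(\tau;\ell g_{r_1}(\cdots),\dots)=(\bar m_t\circ(1\times\ell^{\times_B t}))(\tau;g_{r_1}(\cdots),\dots)$ as $\ell\,m_t(\tau;g_{r_1}(\cdots),\dots)$, which the $A_i$-relation for $g_i$ identifies with $\ell g_i$ on that facet; the two top facets $\delta_1(\mathcal{J}_1\times\mathcal{K}_i)$ and $\delta(i;1,\dots,1)$ carry $\ell m_i$ and $\bar m_i\circ(1\times\ell^{\times_B i})$, identified once more by $\Theta_i$. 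Choosing these homotopies compatibly (this is the fibrewise level-tree analogue of the comparison theorems of Chapter~IV of \cite{BV73} already invoked in \S4), they glue to a fibrewise homotopy from the prescribed boundary map to $\ell\circ(g_i|_{\mathcal{H}_i})$; since $\ell\circ g_i$ extends the latter over $\mathcal{J}_i\times E^{\times_B i}$ and $\mathcal{H}_i\hookrightarrow\mathcal{J}_i$ is a cofibration, the fibrewise homotopy extension property provides an extension $\bar f_i$ of the prescribed boundary map, which one may arrange to satisfy $\bar f_i\simeq\ell g_i$, closing the induction. The base case $i=2$ is immediate ($\bar m_2$ is the factorization of $\ell m_2$, and $\bar f_2=\Theta_2$). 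Uniqueness up to homotopy through fibrewise $A_n$-forms then follows from the uniqueness clause in the universal property used at each stage, together with the fibrewise form of Lemma~5.7 of \cite{BV73} (Lemma~\ref{class of An str}).

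Finally I would deduce the statements about $M_n$. Applying the construction above to the universal fibrewise $A_n$-space $E_n(G)\to M_n(G)$, its fibrewise $\mathcal{P}$-localization $\bar E_n(G)$ carries a fibrewise $A_n$-form with fibres $A_n$-equivalent to $G_\mathcal{P}$, so by Theorem~\ref{classification} it is classified by a map $M_n(G)\to M_n(G_\mathcal{P})$, and the fibrewise $A_n$-map $\ell\colon E_n(G)\to\bar E_n(G)$ over this map encodes the compatibility of the classifying data. The bottom square of the first diagram is the quoted Theorem~4.1 of \cite{May80}; the remaining squares commute up to homotopy because forgetting $\bar m_j$ corresponds to the projections $M_j\to M_{j-1}$, which intertwine the construction. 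The second diagram commutes for the same reason, and the map it induces on fibres is the $\mathcal{P}$-localization $\varOmega^{n-2}_0\Map(G^{\times n},G)\to\varOmega^{n-2}_0\Map(G_\mathcal{P}^{\times n},G_\mathcal{P})$ because, restricted over a fibre of $M_n(G)\to M_{n-1}(G)$, the construction of $\bar m_n$ is $\varOmega^{n-2}_0$ applied to the $\mathcal{P}$-localization $\Map(G^{\times n},G)\to\Map(G^{\times n},G_\mathcal{P})\simeq\Map(G_\mathcal{P}^{\times n},G_\mathcal{P})$.
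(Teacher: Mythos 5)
Your overall strategy — build $\{\bar m_i\}$ and an $A_n$-form of $\ell$ inductively, factor through the fibrewise localization using that $\bar E$ is fibrewise $\mathcal P$-local and that $\ell^{\times_B i}$ (and its products with $\mathcal K_i$, $\mathcal J_i$) is again a fibrewise localization, then correct on boundaries by the homotopy extension property — is the same as the paper's, and the deduction of the statements about $M_n$ from the fibrewise construction matches as well. But the decomposition of the inductive step is genuinely different, and the difference matters. The paper does \emph{not} first build $\bar m_j$ and then try to extend the resulting prescribed map on all of $\mathcal H_j$. Instead it first defines $\ell_j'$ on $(\mathcal H_j - \mathrm{Int}\,\delta(j,1,\dots,1))\times E^{\times_B j}$ (which only involves the already-constructed data $\{m_i\}$, $\{\bar m_i\}_{i<j}$, $\{\ell_i\}_{i<j}$) and extends it over $\mathcal J_j\times E^{\times_B j}$ \emph{for free}, since $\mathcal H_j-\mathrm{Int}\,\delta(j,1,\dots,1)\hookrightarrow\mathcal J_j$ is a deformation retract. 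Only then does it read off $\bar m_j'$ by factoring the restriction of that extension to the omitted facet $\delta(j,1,\dots,1)$ through $1\times\ell^{\times_B j}$, compare it with the prescribed $\bar m_j|_{\mathcal L_j}$, and fix things up by HEP. The extension problem for $\ell_j$ is thereby reduced to correcting a single facet of a map that already exists on all of $\mathcal J_j$.

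Your version attacks the extension problem over the full $\mathcal H_i$ at once, by arguing that the prescribed boundary map is fibrewise homotopic to $\ell\circ g_i|_{\mathcal H_i}$. This is where you have the real gap. You construct homotopies facet-by-facet (using $\bar f_r\simeq \ell g_r$, $\Theta_t$, etc.) and then assert they can be chosen compatibly, citing the comparison theorems of Chapter~IV of \cite{BV73}. Those theorems are about lifting maps of $W$-constructions (and are used in \S4 for composites and homotopy inverses of $A_n$-maps); they do not directly deliver a coherent system of homotopies over the codimension-two corners of $\mathcal H_i$, which is exactly what is needed for the facet homotopies to glue. Making that coherence precise would require, in effect, rebuilding the inductive argument one dimension lower — and at that point the paper's ordering is simply the cleaner way to organize it, because it never needs to compare the boundary data with $\ell g_i$ or carry the auxiliary inductive hypothesis $\bar f_j\simeq\ell g_j$ at all. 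So: right strategy, correct main ingredients (universal property of fibrewise localization, finiteness of $G$, HEP on $\mathcal L_i\hookrightarrow\mathcal K_i$ and $\mathcal H_i\hookrightarrow\mathcal J_i$), but the glueing step you flagged as ``the main obstacle'' is indeed a real obstacle as written, and the paper avoids it by extending $\ell_j'$ from a hemisphere of $\mathcal H_j$ before $\bar m_j$ is even in play.
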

\begin{proof}
Let $\{m_i\}$ be the fibrewise $A_n$-form of $E$.
We construct fibrewise $A_n$-forms $\{\bar m_i\}$ and $\{\ell_i\}$ of $\bar E$ and $\ell$ respectively.
Let $\ell_1=\ell$.
Assume we have constructed fibrewise $A_{j-1}$-forms $\{\bar m_i\}_{i=2}^{j-1}$ and $\{\ell_i\}_{i=1}^{j-1}$.
The fibrewise map $\ell_j':(\mathcal{H}_j-{\rm Int}\, \delta(j,1,\cdots ,1))\times E^{\times_Bj}\rightarrow \bar E$ can be defined by using $\{m_i\}$, $\{\bar m_i\}$ and $\{\ell_i\}$ in a natural manner and be extended on $\mathcal{J}_j\times E^{\times_Bj}$.
We can also define $\bar m_j:\mathcal{L}_j\times \bar E^{\times_Bj}\rightarrow \bar E$ by using $\{\bar m_i\}$ and $\ell_j'(\delta(j,1,\cdots ,1)\times 1)|_{\mathcal{L}_j\times E^{\times_Bj}}=\bar m_j(1\times \ell^{\times_Bj})$.
There exists a fibrewise map $\bar m_j':\mathcal{K}_j\times \bar E^{\times_Bj}\rightarrow \bar E$ such that $\bar m_j'(1\times \ell^{\times_Bj})$ is fibrewise homotopic to $\ell_j'(\delta(j,1,\cdots ,1)\times 1)$.
Then $\bar m_j$ is fibrewise homotopic to $\bar m_j'|_{\mathcal{L}_j\times \bar E^{\times_Bj}}$.
Hence $\bar m_j$ can be extended on $\mathcal{K}_j\times \bar E^{\times_Bj}$ and there exists $\ell_j:\mathcal{J}_j\times E^{\times_Bj}\rightarrow \bar E$ such that $\{\ell_i\}_{i=1}^j$ is a fibrewise $A_j$-form of $\ell$.
\par
If there exist such $(\{\bar m_i^0\},\{\ell_i^0\})$ and $(\{\bar m_i^1\},\{\ell_i^1\})$, we can construct a homotopy $(\{\bar M_i\},\{L_i\})$ between them similarly.\par
The rest of this proposition follows immediately.
\end{proof}
\par
From the above proposition and the result of the previous section, we have the following homotopy commutative diagram:
\begin{align*}
\xymatrix{
\varOmega^{n-1}\Map(G^{\times n},G) \ar[r] \ar[d] & F_{A_n}G \ar[r] \ar[d] & F_{A_{n-1}}G \ar[d] \\
\varOmega^{n-1}\Map(G_\mathcal{P}^{\times n},G_\mathcal{P}) \ar[r] & F_{A_n}G_\mathcal{P} \ar[r] & F_{A_{n-1}}G_\mathcal{P}\\
}
\end{align*}
where the restriction $\varOmega^{n-1}_0\Map(G^{\times n},G)\to \varOmega^{n-1}_0\Map(G_\mathcal{P}^{\times n},G_\mathcal{P})$ is the $\mathcal{P}$-localization.
Since $FG\to FG_\mathcal{P}$ is the $\mathcal{P}$-localization of the identity component, one can show the following corollary inductively.
\begin{cor}\label{locstr}
The map $F_{A_n}G\rightarrow F_{A_n}G_\mathcal{P}$ induced from the map $M_n(G)\rightarrow M_n(G_\mathcal{P})$ above is the $\mathcal{P}$-localization of the identity component.
\end{cor}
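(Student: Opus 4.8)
The plan is to prove the statement by induction on $n$, running the induction along the homotopy commutative ladder of fibration sequences displayed immediately before the statement. The base case is $n=1$: here an $A_1$-space carries no structure, an $A_1$-equivalence is simply a self homotopy equivalence and its $A_1$-form is vacuous, so $F_{A_1}G=FG$ and the claim is precisely the quoted Theorem~4.1 of \cite{May80}, i.e. that $FG\to FG_\mathcal{P}$ restricts to the $\mathcal{P}$-localization of the identity component. (The ladder $\varOmega^{n-1}\Map(G^{\times n},G)\to F_{A_n}G\to F_{A_{n-1}}G$ itself is the one obtained in the excerpt from Proposition~\ref{localization} and the fibration $C_n\to M_n$, via $F_{A_n}G\simeq \varOmega M_n(G)$.)

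For the inductive step, assume $F_{A_{n-1}}G\to F_{A_{n-1}}G_\mathcal{P}$ is the $\mathcal{P}$-localization of the identity component. Restrict the displayed ladder to the basepoint component of $F_{A_n}G$; its image lies in the identity component of $F_{A_{n-1}}G$, and the fibre over that point is the relevant component of $\varOmega^{n-1}\Map(G^{\times n},G)$, on which the left vertical arrow is a $\mathcal{P}$-localization, as recorded just before the statement (localization commutes with finite products, with mapping a finite complex into a nilpotent target, and with iterated looping). All three spaces in sight are connected and nilpotent: a connected loop space is simple hence nilpotent, and the identity component of $F_{A_{n-1}}G$ is nilpotent by the inductive hypothesis together with May's theorem. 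Now take the long exact homotopy sequences of the two rows, map one into the other, apply the exact functor $-\otimes\mathbb{Z}_\mathcal{P}$ and the five lemma; one gets that $F_{A_n}G\to F_{A_n}G_\mathcal{P}$ induces an isomorphism $\pi_k(-)\otimes\mathbb{Z}_\mathcal{P}\xrightarrow{\ \cong\ }\pi_k(-)$ for every $k\geq 1$ on identity components. Since the identity component of $F_{A_n}G_\mathcal{P}$ is $\mathcal{P}$-local — being the total space of a fibration with $\mathcal{P}$-local fibre and base, as the same long exact sequence shows — this isomorphism exhibits $F_{A_n}G\to F_{A_n}G_\mathcal{P}$ as the $\mathcal{P}$-localization of the identity component, closing the induction.

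The step I expect to require the most care is the bookkeeping around nilpotency and basepoint components: one must check that the basepoint of $F_{A_{n-1}}G$ used to split off the fibre lies in its identity component (so that the long exact sequences link up as claimed and the inductive hypothesis applies verbatim), that the components of $\varOmega^{n-1}\Map(G^{\times n},G)$ are genuinely nilpotent and are $\mathcal{P}$-localized by the left vertical arrow, and that the ``five lemma'' invocation is legitimate for possibly non-abelian $\pi_1$, which is where one appeals to the standard theory of localization of nilpotent fibrations. All of these inputs rest on $G$ being a finite nilpotent complex; the remainder of the argument is the routine ``localization respects fibrations'' mechanism.
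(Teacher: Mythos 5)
Your proof is correct and follows exactly the route the paper intends: the paper's own ``proof'' is the single remark ``one can show the following corollary inductively,'' pointing at the very ladder of fibration sequences you exploit, together with the observations that $F_{A_1}G\simeq FG$ and that the left-hand vertical arrow $\mathcal{P}$-localizes identity components. Your induction, with base case from May's theorem and inductive step via the long exact sequence and the mod-$\mathcal{P}$ five lemma, supplies precisely the missing details; the only minor imprecision is that the fibre of $(F_{A_n}G)_0\to(F_{A_{n-1}}G)_0$ need not be a single component of $\varOmega^{n-1}\Map(G^{\times n},G)$, but this does not affect the argument since the relevant $\pi_0$-term is still the image of the boundary map and is $\mathcal{P}$-localized by naturality.
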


\section{Fibrewise rationalization of automorphism bundles}
We review that the triviality of the fibrewise rationalization of an automorphism bundle by \cite{CS00}.
\par
First, we recall the construction of classifying spaces by using the geometric bar construction given by \cite{May72} and \cite{May75}.
Let $G$ be a topological group with non-degenerate basepoint and which has the homotopy type of a CW complex.
We can construct the universal bundle $EG=B(*,G,G)\rightarrow BG=B(*,G,*)$ by the geometric bar construction, where $EG$ is also a topological group and $G\subset EG$ is a closed subgroup compatible with its right $G$-action.
Then $BG$ is the coset space $EG/G$ and the adjoint action $G$ on $EG$ (i.e. $(g,x)\mapsto gxg^{-1}$) induces an action of $G$ on $BG$. 
We also call this action the adjoint action. 
We consider $G$ and $\varOmega BG$ as left $G$-space by this adjoint action.
\par
If $G$ is abelian, then $EG$ is also abelian.
Hence $BG$ is again a topological abelian group. 
Therefore, for any abelian group $\varGamma$, the Eilenberg-MacLane space of type $(\varGamma,n)$ can be taken as a topological abelian group.
\par
For a principal $G$-bundle $P$ over $B$, the {\it automorphism bundle} $\aut P$ of $P$ is the quotient space $P\times G/\sim$, where the equivalence relation $\sim $ is defined by $(u,x)\sim (ug,g^{-1}xg)$ for any $u\in P$ and $x,g\in G$.
The projection $P\rightarrow B$ induces a map $\aut P\rightarrow B$ and $\aut P$ is a fibre bundle with this projection.
Moreover, the multiplication $G\times G\rightarrow G$ induces a fibrewise map $\aut P\times_B\aut P\rightarrow \aut P$ and $\aut P$ becomes a fibrewise topological monoid, more precisely a {\it fibrewise topological group}.
\par
We show that $\aut EG$ and $EG\times_G\varOmega BG$ are fibrewise $A_\infty$-equivalent, where $EG\times_G\varOmega BG$ is the quotient space with identification $(u,\ell)\sim (ug,g^{-1}\ell)$ in $EG\times \varOmega BG$ and hence is also a fibrewise $A_\infty$-space.
\par
We use the notation in \cite{May72} for representing elements of $EG$ and $BG$.
The map $\tilde \zeta: EG\rightarrow PBG$ is defined by $\tilde \zeta (|[g_1,\cdots,g_j]g_{j+1},t|)(s)=|[g_1,\cdots,g_j,g_{j+1}],((1-s)t,s)|$.
Define $\zeta: G\rightarrow \varOmega BG$ by the restriction of $\tilde \zeta$.
This $\tilde \zeta$ is slightly different from May's $\tilde \zeta$ defined in \cite{May75} because May's $\zeta$ is not an $H$-map in general.
By definition, the following diagram commutes:
\begin{align*}
\xymatrix{
G \ar[r] \ar[d]_\zeta & EG \ar[r] \ar[d]^{\tilde \zeta} & BG \ar@{=}[d] \\
\varOmega BG \ar[r] & PBG \ar[r] & BG \\
}
\end{align*}
The map $\zeta$ is $G$-equivariant and is a homotopy equivalence.
\begin{lem}
The map $\zeta$ has a $G$-equivariant $A_\infty$-form, where an $A_\infty$-form $\{\zeta_i\}_{i=1}^\infty$ is {\it $G$-equivariant} if $\zeta_i(1\times g^{\times i})=g\zeta_i$ for each $g\in G$ and $i$.
\end{lem}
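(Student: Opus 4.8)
The plan is to construct an $A_\infty$-form $\{\zeta_i\}_{i=1}^\infty$ of $\zeta$ directly from the bar-coordinate description of $\tilde\zeta$, in a way that is natural in the topological group $G$; the $G$-equivariance will then be a formal consequence of this naturality.

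For the target I would fix on $\varOmega BG$ the $A_\infty$-form $\{m_i\}$ of Example~\ref{example1}(iii) with $X=BG$. Because the reparametrisation family $\{\omega^i\}$ occurring there is independent of $X$, each $m_i$ satisfies $m_i(\rho;\varOmega\psi(\ell_1),\dots,\varOmega\psi(\ell_i))=\varOmega\psi\bigl(m_i(\rho;\ell_1,\dots,\ell_i)\bigr)$ for every based self-map $\psi$ of $BG$. On $G$ I take the tautological monoid form $\mu_i(\rho;g_1,\dots,g_i)=g_1\cdots g_i$. Starting from $\zeta_1=\zeta$ I would define the maps $\zeta_i\colon\mathcal{J}_i\times G^{\times i}\to\varOmega BG$ by induction on $i$. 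The two face identities of an $A_\infty$-form already determine $\zeta_i$ on $\mathcal{H}_i\times G^{\times i}$: on the $\delta_k(r,t)$-faces it must equal $\zeta_r$ after multiplying the $t$ consecutive arguments in $G$, and on the $\delta(t,r_1,\dots,r_t)$-faces it must equal the concatenation $m_t$ of lower $\zeta$'s in $\varOmega BG$. Concretely this prescription is obtained by feeding $(\rho;g_1,\dots,g_i)$, at loop-time $t$, into the simplices $[g_1,\dots,g_k]$ of the nerve of $G$ through which $\tilde\zeta$ is defined, reparametrised by $\rho$; one then extends this over all of $\mathcal{J}_i\times G^{\times i}$ (the contractibility $\mathcal{J}_i\simeq D^{i-1}$ leaving room to do so). Since the simplicial operators of the nerve are built from the multiplication of $G$ and the formula for $\tilde\zeta$ uses only juxtaposition of group elements, the construction can be carried out so that it stays functorial: for every continuous homomorphism $\varphi\colon G\to H$ one has $(\zeta_H)_i\circ(1\times\varphi^{\times i})=(\varOmega B\varphi)\circ(\zeta_G)_i$ for all $i$.

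Granting such a natural family, the lemma follows at once. For $g\in G$ the inner automorphism $c_g\colon G\to G$, $c_g(h)=ghg^{-1}$, is a continuous homomorphism; the diagonal adjoint action of $g$ on $G^{\times i}$ is exactly $c_g^{\times i}$; and $\varOmega B(c_g)$ is exactly the adjoint action of $g$ on $\varOmega BG$ — this is how the adjoint action on $BG=EG/G$ arises, noting that $[gxg^{-1}]=[gx]$ in $EG/G$. Specialising the naturality identity to $\varphi=c_g$ gives $\zeta_i(\rho;gg_1g^{-1},\dots,gg_ig^{-1})=g\cdot\zeta_i(\rho;g_1,\dots,g_i)$ for all $i$, that is, $\zeta_i(1\times g^{\times i})=g\zeta_i$, which is precisely the $G$-equivariance of $\{\zeta_i\}$.

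The bulk of the work, and the main obstacle, is the inductive construction of the $\zeta_i$: one must exhibit a single formula on $\mathcal{J}_i\times G^{\times i}$ that degenerates correctly on the two different kinds of faces of the multiplihedron — those on which arguments are multiplied in $G$ and those on which loops are concatenated in $\varOmega BG$ — and check the identities. This is exactly where the particular, ``linear'' shape of $\tilde\zeta$ in bar coordinates is needed (May's original $\tilde\zeta$ not even making $\zeta$ an $H$-map), while the $X$-independence of the $A_\infty$-form of $\varOmega X$ in Example~\ref{example1}(iii) is what keeps every stage of the induction compatible with the adjoint $G$-action. By contrast, once functoriality in $G$ has been built in, the equivariance asserted in the lemma costs nothing further.
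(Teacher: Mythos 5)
Your plan is essentially the paper's: work in bar--construction coordinates, use the $X$-independent $A_\infty$-form of $\varOmega X$ from Example~\ref{example1}(iii), and build the $\zeta_i$ inductively so that equivariance (under inner automorphisms) is structural rather than checked a posteriori. The difference is that you phrase the structural property as ``functoriality in $G$,'' while the paper makes it concrete; but your write-up leaves exactly the concrete part as a gap.

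The problematic step is the one you yourself flag as ``the bulk of the work'': you assert that, having prescribed $\zeta_i$ on $\mathcal{H}_i\times G^{\times i}$ by the face identities, one can extend over all of $\mathcal{J}_i\times G^{\times i}$ ``so that it stays functorial,'' citing contractibility of $\mathcal{J}_i$. Contractibility of $\mathcal{J}_i$ only guarantees \emph{some} extension; it gives you no control over naturality or equivariance of the choice, which is what you actually need. What the paper supplies at this point is a reduction that makes the inductive step both possible and automatically equivariant: from the explicit formulas $\zeta(g_1\cdots g_n)(s)=|[g_1,\dots,g_n],(1-s,0,\dots,0,s)|$ and $m_i(\rho;\zeta(g_1),\dots,\zeta(g_n))(s)=|[g_1,\dots,g_n],(\text{barycentric coordinates depending only on }\rho,s)|$, one sees that every expression on the boundary $\mathcal{H}_n\times G^{\times n}$ already lies in the fixed simplex $|[g_1,\dots,g_n],\cdot|$. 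Hence it suffices to look for $\zeta_n$ of the special form $\zeta_n(\rho;g_1,\dots,g_n)(s)=|[g_1,\dots,g_n],\xi_n(\rho;s)|$ for some map $\xi_n:\mathcal{J}_n\times I\to\varDelta^n$; the extension problem then takes place in the simplex $\varDelta^n$, which is contractible, so the $\xi_n$ can be constructed inductively. Because $\xi_n$ does not involve the group elements at all, $G$-equivariance of $\zeta_n$ is then immediate (the adjoint action of $g$ acts on $|[g_1,\dots,g_n],t|$ only through the $g_i$'s). Your invocation of naturality for arbitrary homomorphisms $\varphi:G\to H$ is really equivalent to this, but without isolating $\xi_n$ you have no argument that the inductive extension can be made natural; with it, the lemma is a two-line corollary and you do not need to discuss $B c_g$ at all.
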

\begin{proof}
For $g_1,\cdots ,g_n\in G$ and $\rho \in \mathcal{K}_i$, we have
\begin{align*}
\zeta (g_1,\cdots ,g_n)(s)=|[g_1\cdots g_n](1-s,s)|=|[g_1,\cdots ,g_n](1-s,0,\cdots ,0,s)|.
\end{align*}
Let $\{\omega^i:\mathcal{K}_i\to P_i\}_{i=2}^\infty$ and $\{m_i:\mathcal{K}_i\times (\varOmega BG)^{\times i}\to \varOmega BG\}_{i=2}^\infty$ as in Example (iv) in \S 4.
Then, for $\rho \in \mathcal{K}_n$, $g_1,\cdots ,g_n \in G$ and $\omega^n_{k-1}(\rho)\leq s\leq \omega^n_k(\rho)$, we obtain
\begin{align*}
m_i(\rho ;\zeta (g_1),\cdots ,\zeta (g_n))(s)=&\left|[g_k],\left(1-\frac{s-\omega^n_{k-1}(\rho)}{\omega^n_k(\rho)-\omega^n_{k-1}(\rho)}, \frac{s-\omega^n_{k-1}(\rho)}{\omega^n_k(\rho)-\omega^n_{k-1}(\rho)}\right)\right|\\
=&\left|[g_1,\cdots ,g_n],\left(0,\cdots ,0,1-\frac{s-\omega^n_{k-1}(\rho)}{\omega^n_k(\rho)-\omega^n_{k-1}(\rho)}, \frac{s-\omega^n_{k-1}(\rho)}{\omega^n_k(\rho)-\omega^n_{k-1}(\rho)},0,\cdots ,0\right)\right|.
\end{align*}
From these equations, it is sufficient for us to construct a fibrewise $A_\infty$-form $\{\zeta_i\}_{i=1}^\infty$ such that $\zeta_n(\rho;g_1,\cdots ,g_n)(s)=|[g_1,\cdots ,g_n],\xi_n(\rho;s)|$ for some $\xi_n:\mathcal{J}_n\times I\to \varDelta^n$.
Since each simplex $\varDelta^n$ is contractible, such $\{\xi_i\}_{i=1}^\infty$ can be constructed inductively.
\end{proof}
\par
Hence a fibrewise map $1\times_G \zeta: EG\times_G G\rightarrow EG\times_G\varOmega BG$ is a fibrewise $A_\infty$-equivalence.
\par
Here, $EG\times_G\varOmega BG$ is the fibrewise based loop space of the fibrewise pointed space $EG\times_GBG$ with section $BG\rightarrow EG\times_GBG$ given by $[u]\mapsto [u,e]$ for $u\in EG$ and the identity element $e\in EG$.
And the map $EG\times EG\rightarrow EG\times EG$ $(u,u')\mapsto (u,uu')$ induces a fibrewise pointed topological equivalence $EG\times_GBG\rightarrow BG\times BG$, whose target is the fibrewise pointed space $BG\stackrel{\sigma}{\rightarrow}BG\times BG\stackrel{\pi}{\rightarrow}BG$ such that $\pi$ is the first projection and $\sigma$ is the diagonal map. 
\par
Let $G$ be a compact connected Lie group.
Since the rational cohomology ring $H^*(BG;\bm{Q})$ of $BG$ is a polynomial ring, there exists the rationalization $\ell:BG\rightarrow (BG)_{(0)}$, where $(BG)_{(0)}$ is a topological abelian group.
The map $1\times \ell:BG\times BG\rightarrow BG\times (BG)_{(0)}$ is a fibrewise rationalization over $BG$, where $BG\times (BG)_{(0)}$ is a fibrewise pointed space with projection given by the first projection and with section $(1,\ell):BG\rightarrow BG\times (BG)_{(0)}$. 
Finally, we have the fibrewise pointed topological equivalence $BG\times (BG)_{(0)}\rightarrow BG\times (BG)_{(0)}$ $(x,y)\mapsto (x,\ell(x)^{-1}y)$, whose target is a fibrewise pointed space with section given by $x\mapsto (x,e)$. 
Thus $EG\times_G\varOmega BG\rightarrow EG\times \varOmega (BG)_{(0)}$ is a fibrewise rationalization and is a fibrewise $A_\infty$-homomorphism.
Moreover, $EG\times \varOmega (BG)_{(0)}$ is trivial as a fibrewise $A_\infty$-space.
\begin{thm}\label{triviality}
Let $G$ be a compact connected Lie group, $B$ be a space with homotopy type of a CW complex and $P$ be a principal $G$-bundle over $B$.
If $f:B\rightarrow M_n(G)$ is the classifying map of the automorphism bundle $\aut P=P\times_GG$ of $P$, then the composition of $f$ and $M_n(G)\rightarrow M_n(G_{(0)})$ is null-homotopic.
\end{thm}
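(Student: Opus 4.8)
The plan is to reduce the statement to the universal bundle over $BG$, where the computations carried out just before the statement already exhibit the fibrewise rationalization as a trivial fibrewise $A_\infty$-space, and then to transport this along a classifying map of $P$ and feed the result into the classification theorem (Theorem \ref{classification}).

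First I would fix a classifying map $g\colon B\to BG$ of the principal $G$-bundle $P$, so that $P$ is fibre homotopy equivalent to $g^{*}EG$ and hence $\aut P$ is isomorphic to $g^{*}(\aut EG)$ as a fibrewise topological group, in particular as a fibrewise $A_\infty$-space (using, as recorded at the beginning of \S 5, that a pull-back of a fibrewise $A_n$-space is again one). Next I would invoke the chain established just above the statement: $\aut EG=EG\times_{G}G$ is fibrewise $A_\infty$-equivalent, via $1\times_{G}\zeta$, to $EG\times_{G}\varOmega BG$, and the fibrewise rationalization $EG\times_{G}\varOmega BG\to BG\times\varOmega (BG)_{(0)}$ is a fibrewise $A_\infty$-homomorphism whose target is the trivial fibrewise $A_\infty$-space $BG\times\varOmega (BG)_{(0)}$. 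Then I would pull this whole picture back along $g$: since pulling back does not alter fibres, $g^{*}$ of a fibrewise rationalization is again a fibrewise rationalization, and $g^{*}$ of a trivial fibrewise $A_\infty$-space is trivial, $g^{*}\bigl(BG\times\varOmega (BG)_{(0)}\bigr)=B\times\varOmega (BG)_{(0)}$. So the fibrewise rationalization of $\aut P$ is fibrewise $A_\infty$-equivalent, hence a fortiori fibrewise $A_n$-equivalent, to a trivial fibrewise $A_n$-space, whose fibre $\varOmega (BG)_{(0)}\simeq\varOmega\bigl((BG)_{(0)}\bigr)$ is $A_\infty$-equivalent to $G_{(0)}$ because $\zeta\colon G\to\varOmega BG$ is an $A_\infty$-equivalence and $BG$ is simply connected.

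Finally I would close the argument using Proposition \ref{localization}: by the construction of the map $M_n(G)\to M_n(G_{(0)})$ there (the $\mathcal{P}=\varnothing$ case), together with the fact that fibrewise localization commutes with pull-back, the composite $B\xrightarrow{f}M_n(G)\to M_n(G_{(0)})$ classifies the fibrewise rationalization of $f^{*}E_n(G)\simeq\aut P$. By the previous paragraph that fibrewise $A_n$-space is trivial, and a trivial fibrewise $A_n$-space is classified by a constant map; hence the bijectivity in Theorem \ref{classification} forces the composite $B\to M_n(G_{(0)})$ to be homotopic to a constant map, i.e.\ null-homotopic.

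\emph{Expected main obstacle.} The substantive content has already been done in the universal case, so the delicate part is purely the compatibility bookkeeping: verifying that the rationalization map $M_n(G)\to M_n(G_{(0)})$ of Proposition \ref{localization} genuinely sends the class of $f$ to the class of the fibrewise rationalization of $\aut P$, and that being ``trivial as a fibrewise $A_\infty$-space'' survives both pull-back along $g$ and restriction of the $A_\infty$-structure to an $A_n$-structure. Once these naturality statements are pinned down, the remaining identifications ($\aut P=g^{*}\aut EG$, $g^{*}$ of a product being a product over $B$, and $\varOmega (BG)_{(0)}\simeq G_{(0)}$) are routine.
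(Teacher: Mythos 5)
Your argument is correct and follows exactly the route the paper takes: the paper does not give a separate formal proof of this theorem, but the chain of identifications you cite—$\aut P\cong g^{*}\aut EG$, the fibrewise $A_\infty$-equivalence $\aut EG\simeq EG\times_{G}\varOmega BG$, the fibrewise rationalization to the trivial $EG\times\varOmega (BG)_{(0)}$, and the pull-back along $g$—is precisely the discussion the paper places immediately before the statement, and the conclusion via Proposition \ref{localization} and Theorem \ref{classification} is the intended (implicit) final step. You have simply made explicit the bookkeeping the paper leaves to the reader.
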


\section{Proof of Theorem \ref{mainthm}}
We state Lemma 6.4 of \cite{CS00} as the following lemma.
\begin{lem}\label{diagram}
Let $A_1,A_2,B_1,B_2$ be groups, $A_2,B_2$ be abelian, $A_2$ be finitely generated, $A_3$ and $B_3$ be sets on which $A_2$ and $B_2$ act respectively, $A_4$ and $B_4$ be sets and $a_3\in A_3$ be a fixed element.
Consider the following commutative diagram.
\begin{align*}
\xymatrix{
A_1 \ar[r]^-{f_1} \ar[d]^-{\ell_1} & A_2 \ar[r]^-{f_2} \ar[d]^-{\ell_2} & A_3 \ar[r]^-{f_3} \ar[d]^-{\ell_3} & A_4 \ar[d]^-{\ell_4} \\
B_1 \ar[r]^-{g_1} & B_2 \ar[r]^-{g_2} & B_3 \ar[r]^-{g_3} & B_4 \\ 
}
\end{align*}
Assume the following conditions:\\
(i) The maps $f_1,g_1,\ell_1,\ell_2$ are homomorphisms, where $\ell_1$ is $\bm{Q}$-surjective (i.e. for any $b\in B_1$, there exists an integer $n$ such that $b^n\in \image \ell_1$) and $\ker \ell_2$ is finite.\\
(ii) For any $a_2\in A_2$ and $a\in A_3$, $\ell_3(a_2\cdot a)=\ell_2(a_2)\cdot \ell_3(a)$.\\
(iii) For any $a\in A_3$, $A_2\cdot a=f_3^{-1}(f_3(a))$. Similarly, for any $b \in B_3$, $B_2\cdot b=g_3^{-1}(g_3(b))$.\\
(iv) The isotropy subgroup of $a_3$ is $\image f_1$, and the isotropy subgroup of $\ell_3(a_3)$ is $\image g_1$.\\
Then $f_3^{-1}(f_3(a_3))\cap \ell_3^{-1}(\ell_3(a_3))$ is finite. Moreover, if $\ell_4$ is finite-to-one and if the condition (iv) holds for any $a_3\in A_3$, then $\ell_3$ is also finite-to-one.
\end{lem}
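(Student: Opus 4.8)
The plan is to reduce the first assertion to the finiteness of an index of subgroups inside the finitely generated abelian group $A_2$, and then to deduce the ``moreover'' part by a covering argument. First I would unwind the first assertion. By (iii) the set $f_3^{-1}(f_3(a_3))$ is exactly the orbit $A_2\cdot a_3$, and by (iv) the orbit map $a_2\mapsto a_2\cdot a_3$ induces a bijection $A_2/\image f_1\to A_2\cdot a_3$. Using the equivariance (ii) and then (iv) for $\ell_3(a_3)$, an element $a_2\cdot a_3$ lies in $\ell_3^{-1}(\ell_3(a_3))$ exactly when $\ell_2(a_2)$ fixes $\ell_3(a_3)$, i.e.\ $\ell_2(a_2)\in\image g_1$; since commutativity gives $\ell_2(\image f_1)=g_1(\image\ell_1)\subseteq\image g_1$, one obtains
\[
f_3^{-1}(f_3(a_3))\cap\ell_3^{-1}(\ell_3(a_3))\;\cong\;\ell_2^{-1}(\image g_1)/\image f_1 ,
\]
so the first assertion is equivalent to $[\ell_2^{-1}(\image g_1):\image f_1]<\infty$.

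The second step is this index estimate, and it is here that the hypotheses in (i) are used. Put $H=\ell_2^{-1}(\image g_1)$, $K=\image f_1$, $N=\ker\ell_2$; all are subgroups of the finitely generated abelian group $A_2$, with $N,K\subseteq H$. One has $[NK:K]\le|N|<\infty$, so it suffices to bound $[H:NK]$. The restriction $\ell_2|_H$ has kernel $N$ and image $\image\ell_2\cap\image g_1$, while $\ell_2(NK)=\ell_2(K)=g_1(\image\ell_1)$, so $H/NK\cong(\image\ell_2\cap\image g_1)/g_1(\image\ell_1)$. This group is finitely generated, being a subquotient of the finitely generated abelian group $\image\ell_2$, and it is torsion, since it embeds into $\image g_1/g_1(\image\ell_1)$, which is a quotient of $B_1/\image\ell_1$ and the latter is a torsion group by the $\bm{Q}$-surjectivity of $\ell_1$. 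A finitely generated torsion abelian group is finite, so $[H:NK]<\infty$, hence $[H:K]<\infty$, which proves the first assertion.

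For the ``moreover'' part, assume (iv) holds for every element of $A_3$ and that $\ell_4$ is finite-to-one. Given $c\in B_3$ with $\ell_3^{-1}(c)\ne\emptyset$, pick $a_3\in\ell_3^{-1}(c)$; for any $a\in\ell_3^{-1}(c)$ commutativity gives $\ell_4(f_3(a))=g_3(\ell_3(a))=g_3(c)=\ell_4(f_3(a_3))$, so $f_3(a)$ lies in the finite set $\ell_4^{-1}(\ell_4(f_3(a_3)))$. Hence $\ell_3^{-1}(c)$ is a union, over finitely many $d$ in that set, of the pieces $f_3^{-1}(d)\cap\ell_3^{-1}(c)$; choosing a point $a'$ in a nonempty such piece rewrites it as $f_3^{-1}(f_3(a'))\cap\ell_3^{-1}(\ell_3(a'))$, which is finite by the first assertion applied at $a'$. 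This is a finite union of finite sets, so $\ell_3$ is finite-to-one. The one place I expect to need care is the index estimate: $\bm{Q}$-surjectivity of $\ell_1$ alone yields only a torsion quotient, and one genuinely needs the finite generation of $A_2$ (hence of $\image\ell_2$) together with the finiteness of $\ker\ell_2$ to upgrade this to an honest finite index; everything else is routine diagram chasing.
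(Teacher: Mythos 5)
Your proposal is correct and follows essentially the same strategy as the paper: both identify the intersection with the quotient $\ell_2^{-1}(\image g_1)/\image f_1$ inside the finitely generated abelian group $A_2$, and both use $\bm{Q}$-surjectivity of $\ell_1$ to produce torsion, finiteness of $\ker\ell_2$ to absorb the kernel, and finite generation of $A_2$ to upgrade torsion to finiteness. The only difference is presentational: you split the index along the chain $\image f_1\le (\ker\ell_2)(\image f_1)\le \ell_2^{-1}(\image g_1)$, while the paper argues element-by-element that each coset has finite order before invoking finite generation; the ``moreover'' covering argument is the one the paper leaves implicit.
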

\begin{proof}
Fix $a\in A_2$ such that $a\cdot a_3\in \ell_3^{-1}(\ell_3(a_3))$. 
From (ii) and (iii), there exists $b\in B_1$ such that $g_1(b)=\ell_2(a)$. 
From (i), we can take an integer $n$ and $a'\in A_1$ such that $\ell_1(a')=b^n$. 
Since $\ell_2(na-f_1(a'))=0$, the order of $na-f_1(a')$ is finite. 
Hence, for some integer $k$, $(ka)\cdot a_3=a_3$. 
Then, because $A_2$ is finitely generated abelian group, the condition (iii) says that $f_3^{-1}(f_3(a_3))\cap \ell_3^{-1}(\ell_3(a_3))$ is finite. 
The rest of this lemma follows immediately from this assertion.
\end{proof}
\par
We also quote Theorem 6.2 and Corollary 5.4 in Chapter II of \cite{HMR75}.
Let $[X;Y]^*$ denote the homotopy set of basepoint-preserving maps from $X$ to $Y$. 
\begin{thm}\label{HMR1}
Let $X$ be an $H$-space with non-degenerate basepoint and which has the homotopy type of a connected CW complex and $\ell:X\rightarrow X_\mathcal{P}$ be the $\mathcal{P}$-localization $H$-map. 
Then, for a connected finite complex $W$, $\ell_*:[W;X]^*\rightarrow [W;X_\mathcal{P}]^*$ is $\bm{Q}$-bijective, in other words, $\ell_*$ is $\bm{Q}$-surjective and every element in $\ker \ell_*$ has a finite order.
\end{thm}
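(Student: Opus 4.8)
The plan is to deduce the statement from the localization theory of nilpotent spaces of \cite{HMR75}. A connected $H$-space with the homotopy type of a CW complex is simple, hence nilpotent, so the $\mathcal{P}$-localization $\ell:X\to X_\mathcal{P}$ exists, may be taken to be an $H$-map, and induces on every homotopy group the algebraic $\mathcal{P}$-localization $\pi_i(X)\to\pi_i(X_\mathcal{P})$; the latter is $\bm{Q}$-bijective, its kernel consisting of elements of finite order and each element of its cokernel being killed by a $\mathcal{P}$-number. Since $[W;X_\mathcal{P}]^*$ depends only on the homotopy groups of $X_\mathcal{P}$ in degrees at most $\dim W+1$, I would first replace $X$ and $X_\mathcal{P}$ by a finite Postnikov section, reducing to the case where $X$ has only finitely many non-trivial homotopy groups, and then argue by induction along the Postnikov tower.

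For the inductive step write $X[n]$ for the $n$-th Postnikov section of $X$ and use the principal fibration $K(\pi_n,n)\to X[n]\to X[n-1]$ with its $k$-invariant $X[n-1]\to K(\pi_n,n+1)$, together with the corresponding fibration for $X_\mathcal{P}$ and the vertical localization maps between them. Applying $[W;-]^*$ gives the exact Barratt--Puppe sequence of pointed sets
\begin{align*}
[W;\Omega X[n-1]]^*\to H^n(W;\pi_n)\to [W;X[n]]^*\to [W;X[n-1]]^*\to H^{n+1}(W;\pi_n)
\end{align*}
and likewise for $X_\mathcal{P}$, with coefficients $(\pi_n)_\mathcal{P}$ replacing $\pi_n$. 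Since $W$ is a finite complex, universal coefficients together with the $\bm{Q}$-bijectivity of $\pi_n\to(\pi_n)_\mathcal{P}$ show that $H^i(W;\pi_n)\to H^i(W;(\pi_n)_\mathcal{P})$ is $\bm{Q}$-bijective for each $i$, while the inductive hypothesis gives the same for $[W;X[n-1]]^*\to[W;X_\mathcal{P}[n-1]]^*$. A diagram chase through the two sequences --- tracking the $H^n(W;\pi_n)$-action on the fibres of $[W;X[n]]^*\to[W;X[n-1]]^*$, in the style of Lemma \ref{diagram} --- then promotes these to $\bm{Q}$-bijectivity of $[W;X[n]]^*\to[W;X_\mathcal{P}[n]]^*$, completing the induction. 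Alternatively one can argue in one stroke: $\Map^*(W,X)$ is a nilpotent space whose $\mathcal{P}$-localization is $\Map^*(W,X_\mathcal{P})$, so that whenever $[W;X]^*=\pi_0\Map^*(W,X)$ carries a group structure (e.g.\ $X$ a loop space) the claim reduces to the formal fact that $A\to A_\mathcal{P}$ is $\bm{Q}$-bijective for every nilpotent group $A$.

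I expect the main obstacle to be organizational rather than conceptual: the comparison has to be carried out through \emph{pointed sets equipped with group actions}, not through groups, since $[W;X]^*$ need carry no canonical group structure for a general $H$-space $X$, so one must verify that the property ``$\bm{Q}$-bijective'' propagates correctly along the relevant exact sequences of pointed sets and their actions --- which is precisely the role played by a lemma of the type of Lemma \ref{diagram}. The single genuinely structural input is the localization theory of \cite{HMR75} for nilpotent spaces and for function spaces $\Map^*(W,X)$, which I would simply cite.
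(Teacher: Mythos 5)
The paper offers no proof of this statement: it is introduced by the sentence ``We also quote Theorem 6.2 and Corollary 5.4 in Chapter II of \cite{HMR75}'', and the theorem is presented as a restatement of those results from Hilton--Mislin--Roitberg with no further argument. There is therefore no in-paper proof to compare your approach against; the paper's route is pure citation, and your sketch is supplying a proof that the paper deliberately outsources to the literature.

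With that caveat, your Postnikov-tower induction is essentially the standard route and, up to presentation, the sort of argument found in \cite{HMR75}, so it is a reasonable reconstruction rather than a gap. Two points worth flagging. First, your concern that $[W;X]^*$ need not be a group is real but manageable: for a connected CW $H$-space $X$ with nondegenerate basepoint the two shear maps $(a,b)\mapsto(a,\mu(a,b))$ and $(a,b)\mapsto(\mu(a,b),b)$ on $X\times X$ are pointed weak equivalences, hence pointed homotopy equivalences, so $[W;X]^*$ is always an algebraic loop with two-sided division; the notions ``$\bm{Q}$-surjective'' and ``$\ker\ell_*$ consists of elements of finite order'' make sense there, and since the actions in the Barratt--Puppe ladders are by the abelian groups $H^n(W;\pi_n)$ and $H^n(W;(\pi_n)_\mathcal{P})$, a diagram chase in the spirit of Lemma \ref{diagram} does go through. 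Second, the ``one stroke'' alternative via localization of $\Map^*(W,X)$ is not a shortcut independent of the cited reference: the assertion that $\Map^*(W,\ell)$ is a $\mathcal{P}$-localization of the function space, together with the precise $\pi_0$-statement expressing $\bm{Q}$-bijectivity, is exactly the content of the quoted HMR results, so invoking it is tantamount to doing what the paper does. If the aim is a self-contained proof, the Postnikov sketch is the right skeleton; once the loop-versus-group bookkeeping and the finite-generation input (that $H_*(W)$ is finitely generated because $W$ is a finite complex) are written out carefully, it should close.
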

\par
\begin{lem}\label{HMR2}
Let $G$ be a nilpotent finite complex and $\ell:G\rightarrow G_\mathcal{P}$ be the $\mathcal{P}$-localization.
Then the homomorphism $\pi_0(FG)\rightarrow \pi_0(FG_\mathcal{P})$ induced from $\ell$ is finite-to-one.
\end{lem}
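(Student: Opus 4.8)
Since the map $\pi_0(FG)\to\pi_0(FG_\mathcal{P})$ is a homomorphism of groups, every non-empty fibre is a coset of its kernel $N$, so the claim is equivalent to the finiteness of $N$, and the plan is to bound $N$. First I would identify $N$ concretely: an element $[f]\in\pi_0(FG)$ lies in $N$ exactly when the $\mathcal{P}$-localization $\bar f$ of $f$ is homotopic to $\mathrm{id}_{G_\mathcal{P}}$, and since $\bar f$ is characterized up to homotopy by $\bar f\circ\ell\simeq\ell\circ f$ and $G_\mathcal{P}$ is $\mathcal{P}$-local, this happens if and only if $\ell\circ f\simeq\ell$ in $[G;G_\mathcal{P}]$. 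Hence $N$ is contained in the fibre $F$ of $\ell_*\colon[G;G]\to[G;G_\mathcal{P}]$ over the class $[\ell]$, and it suffices to show $F$ is finite.

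To see that $F$ is finite I would run obstruction theory. Put $d=\dim G$. As $G$ is $d$-dimensional and the map $G\to G[d]$ to the $d$-th Postnikov section is a $\pi_{\le d}$-equivalence, post-composition gives a bijection $[G;G]\cong[G;G[d]]$ compatible with $\mathcal{P}$-localization, which identifies $F$ with a fibre of $[G;G[d]]\to[G;G_\mathcal{P}[d]]$; so I may replace the targets by $G[d]$ and $G_\mathcal{P}[d]$. Because $G$ is nilpotent, $G[d]$ sits in a finite principal Postnikov tower $G[d]\simeq Y_r\to Y_{r-1}\to\cdots\to Y_0=\ast$ whose $j$-th stage is a principal fibration with fibre $K(A_j,n_j)$ for a finitely generated abelian group $A_j$, and $\mathcal{P}$-localizing the tower yields a matching tower for $G_\mathcal{P}[d]$ with fibres $K((A_j)_\mathcal{P},n_j)$. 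Climbing the tower one stage at a time, the set of lifts of a fixed element of $[G;Y_{j-1}]$ to $[G;Y_j]$ that moreover map to a prescribed element of $[G;(Y_j)_\mathcal{P}]$ has cardinality at most $|K_j|$, where
\[
K_j=\ker\bigl(H^{n_j}(G;A_j)\to H^{n_j}(G;(A_j)_\mathcal{P})\bigr).
\]
Now $H^{n_j}(G;A_j)$ is a finitely generated abelian group because $G$ is a finite complex; since $\bm{Z}_\mathcal{P}$ is flat over $\bm{Z}$ we get $H^{n_j}(G;(A_j)_\mathcal{P})=H^{n_j}(G;A_j)\otimes\bm{Z}_\mathcal{P}$, the displayed map becomes the localization homomorphism, and its kernel is the finite subgroup of torsion prime to $\mathcal{P}$. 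Hence each $K_j$ is finite; as there are only finitely many stages, $F$ is assembled from finitely many finite sets and is therefore finite, so $N$ is finite.

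The step I expect to require the most care is the obstruction-theoretic bookkeeping: it has to be carried out for free (unpointed) homotopy classes, one must compare the fibres of $[G;Y_j]\to[G;Y_{j-1}]$ before and after localization, and one needs that the ``difference'' of two lifts sharing a localization really lies in $K_j$ (up to an action that only shrinks the relevant set). None of this affects the conclusion; the single clean input that makes everything go through is that $\mathcal{P}$-localization has finite kernel on each of the finitely generated groups $H^{n_j}(G;A_j)$, which in turn uses only that $G$ is a finite complex with finitely generated homotopy groups.
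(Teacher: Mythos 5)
The paper does not prove this lemma; it is quoted from Corollary~5.4 in Chapter~II of \cite{HMR75}, so there is no in-paper argument to compare against. Your overall reduction (bound the kernel $N$, embed $N$ in the fibre $F$ of $\ell_*\colon[G;G]\to[G;G_\mathcal{P}]$ over $[\ell]$, replace the targets by their $d$-types using $\dim G=d$, and climb a finite principal Postnikov tower) is the natural obstruction-theoretic route and is very plausibly close to what Hilton--Mislin--Roitberg do.

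The step you flag at the end is, however, a genuine gap rather than bookkeeping, and the bound ``at most $|K_j|$'' is not justified. At stage $j$ the set of homotopy classes of lifts of a fixed $\alpha\in[G;Y_{j-1}]$ is not a torsor over $H:=H^{n_j}(G;A_j)$ but a quotient $H/S$, where $S=\mathrm{im}\bigl([G;\Omega Y_{j-1}]\to H\bigr)$ is the stabilizer of the translation action; likewise the localized lifts form $H'/S'$ with $H'=H\otimes\bm{Z}_\mathcal{P}$ and $S'=\mathrm{im}\bigl([G;\Omega(Y_{j-1})_\mathcal{P}]\to H'\bigr)$. The fibre of the localization map over a given lift is therefore $\phi^{-1}(S')/S$ (with $\phi\colon H\to H'$ the coefficient localization); this contains $K_j/(K_j\cap S)$, but it can be strictly larger, with excess measured by $(S'\cap\mathrm{im}\,\phi)/\phi(S)$. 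Saying that the action ``only shrinks the relevant set'' does not help here: the set you start from is $\phi^{-1}(S')$, not $K_j$, and $S'$ may be much bigger than $\phi(S)$, so with no further input the fibre can even be infinite (e.g.\ $S=0$ while $S'\cap\mathrm{im}\,\phi$ is infinite). What controls the excess is precisely the other half of the Hilton--Mislin--Roitberg package, Theorem~\ref{HMR1}, which you never invoke: applied to the connected $H$-space $\Omega Y_{j-1}$ (componentwise if $Y_{j-1}$ is not $1$-connected), it gives that $[G;\Omega Y_{j-1}]\to[G;\Omega(Y_{j-1})_\mathcal{P}]$ is $\bm{Q}$-surjective, which forces $(S'\cap\mathrm{im}\,\phi)/\phi(S)$ to be a torsion group; since it is also a subquotient of the finitely generated abelian group $\mathrm{im}\,\phi\cong H/K_j$, it is then finite. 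With that correction each stage contributes a finite (though in general not $|K_j|$-bounded) number of lifts and the conclusion follows. So your plan is sound, but the ``single clean input'' you identify (finite kernel on cohomology) is not sufficient; the stagewise $\bm{Q}$-surjectivity of Theorem~\ref{HMR1} is an indispensable second ingredient.
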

\par
In the following argument, assume $B$ is a connected finite complex, $(G,\{m_i\}_{i=2}^n)$ is an $A_n$-space with homotopy type of a connected finite complex such that $m_2:G\times G\rightarrow G$ has the homotopy unit and $G_\mathcal{P}$ is the $\mathcal{P}$-localization of $G$.\par
From Corollary \ref{locstr} and Theorem \ref{HMR1}, $[\varSigma B;M_n(G)]^*\rightarrow [\varSigma B;M_n(G_\mathcal{P})]^*$ is $\bm{Q}$-surjective.
Since $G$ is an $H$-space and is a finite complex, $\pi_r(M_n(G))$ is finitely generated.
Then $\pi_r(M_n(G))\rightarrow \pi_r(M_n(G_\mathcal{P}))$ is finite-to-one for $r\geq 2$.
In fact, this also holds for $r=1$.
\begin{lem}\label{pi1}
$\pi_1(M_n(G))\rightarrow \pi_1(M_n(G_\mathcal{P}))$ is finite-to-one.
\end{lem}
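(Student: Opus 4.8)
The plan is to induct on $n$, comparing the tower of Hurewicz fibrations $M_n(G)\to M_{n-1}(G)\to\cdots\to M_1(G)=BFG$ with the analogous tower for $G_\mathcal{P}$, which it maps to by Proposition \ref{localization}. Since $\pi_1(M_n(G))\to\pi_1(M_n(G_\mathcal{P}))$ is a homomorphism of groups, it is finite-to-one precisely when its kernel is finite, and I would extract the finiteness of this kernel by feeding the low-dimensional part of the ladder of homotopy exact sequences into Lemma \ref{diagram}.

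For the base case $n=1$ one has $M_1(G)=BFG$, so $\pi_1(M_1(G))\cong\pi_0(FG)$, and Lemma \ref{HMR2} gives directly that $\pi_0(FG)\to\pi_0(FG_\mathcal{P})$ is finite-to-one. For the inductive step, fix $n\geq 2$ and write $F_n=\varOmega^{n-2}_0\Map(G^{\times n},G)$ for the fibre of $M_n(G)\to M_{n-1}(G)$ and $F_{n,\mathcal{P}}$ for the corresponding fibre over $G_\mathcal{P}$; by Proposition \ref{localization} the induced map $F_n\to F_{n,\mathcal{P}}$ is the $\mathcal{P}$-localization of the identity component. Here I would use that, $G$ being a connected $H$-space with homotopy unit $m_2$, it is grouplike, so $\Map(G^{\times n},G)$ is a grouplike $H$-space and each of its components has abelian $\pi_1$; hence $\pi_1(F_n)$ is abelian, and it is finitely generated because $G^{\times n}$ is a finite complex and $G$ has finite type. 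The portion of the ladder I need is
\begin{align*}
\xymatrix{
\pi_2(M_{n-1}(G)) \ar[r]^-{\partial} \ar[d]_-{\ell_1} & \pi_1(F_n) \ar[r]^-{i_*} \ar[d]_-{\ell_2} & \pi_1(M_n(G)) \ar[r]^-{p_*} \ar[d]_-{\ell_3} & \pi_1(M_{n-1}(G)) \ar[d]^-{\ell_4} \\
\pi_2(M_{n-1}(G_\mathcal{P})) \ar[r]^-{\partial_\mathcal{P}} & \pi_1(F_{n,\mathcal{P}}) \ar[r]^-{i_{\mathcal{P}*}} & \pi_1(M_n(G_\mathcal{P})) \ar[r]^-{p_{\mathcal{P}*}} & \pi_1(M_{n-1}(G_\mathcal{P}))
}
\end{align*}
with vertical maps induced by $\mathcal{P}$-localization.

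I would then apply Lemma \ref{diagram} with $A_1=\pi_2(M_{n-1}(G))$, $A_2=\pi_1(F_n)$ (finitely generated abelian), $A_3=\pi_1(M_n(G))$ viewed as a set on which $A_2$ acts by $z\cdot x=i_*(z)x$, $A_4=\pi_1(M_{n-1}(G))$, and the $B_\bullet$ the analogous groups over $G_\mathcal{P}$, with the maps of the two rows as $f_\bullet,g_\bullet$, the localization maps as $\ell_\bullet$, and $a_3$ the identity. Exactness of the rows yields condition (iii) (the orbit $A_2\cdot x=\image i_*\cdot x=\ker p_*\cdot x=p_*^{-1}(p_*(x))$, since $\image i_*=\ker p_*$ is normal) and condition (iv) (the isotropy group of any $x$ is $\ker i_*=\image\partial$, and likewise downstairs), while (ii) holds since $\ell_3$ is a homomorphism compatible with $i_*$ and $i_{\mathcal{P}*}$. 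For (i): $\partial,\partial_\mathcal{P},\ell_1,\ell_2$ are homomorphisms; $\ell_1$ is $\bm{Q}$-surjective, being the map $[\varSigma S^1;M_{n-1}(G)]^*\to[\varSigma S^1;M_{n-1}(G_\mathcal{P})]^*$, which is $\bm{Q}$-surjective by the observation preceding this lemma; and $\ker\ell_2$ is finite, being contained in the finite torsion subgroup of the finitely generated abelian group $\pi_1(F_n)$. Finally $\ell_4$ is finite-to-one by the inductive hypothesis and (iv) holds for every $a_3$, so the ``moreover'' clause of Lemma \ref{diagram} gives that $\ell_3=\pi_1(M_n(G))\to\pi_1(M_n(G_\mathcal{P}))$ is finite-to-one, completing the induction.

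The hard part will be the bookkeeping around the $A_2$-action on $A_3$: one has to be sure that the low-degree segment of the homotopy exact sequence of the Hurewicz fibration $F_n\to M_n(G)\to M_{n-1}(G)$ is genuinely exact as a sequence of groups through $\pi_1$, with $i_*$ landing on the normal subgroup $\ker p_*$, so that orbits and isotropy groups match conditions (iii) and (iv); checking that $\pi_1(F_n)$ is finitely generated abelian — the place where finiteness of $G$ and its $H$-structure really enter — is the other point needing care.
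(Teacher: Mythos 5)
Your proof is correct and follows essentially the same route as the paper: induct on $n$ over the tower $M_n(G)\to M_{n-1}(G)$, feed the bottom of the ladder of homotopy exact sequences into Lemma \ref{diagram}, using Lemma \ref{HMR2} for $n=1$, $\bm{Q}$-surjectivity (via Corollary \ref{locstr} and Theorem \ref{HMR1}) for $\ell_1$, and finite kernel for $\ell_2$ from finite generation. The only cosmetic difference is that the paper first transports everything to $\pi_0(F_{A_n}G)$ via the contractibility of $C_n$ (giving $\pi_1(M_n(G))\cong\pi_0(F_{A_n}G)$) and works with the fibration $\varOmega^{n-1}\Map(G^{\times n},G)\to F_{A_n}G\to F_{A_{n-1}}G$, whereas you stay with $\pi_1(M_n(G))$ and the fibration $\varOmega^{n-2}_0\Map(G^{\times n},G)\to M_n(G)\to M_{n-1}(G)$; the two ladders are identical under these isomorphisms. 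If anything, you are more careful than the paper about one point worth keeping: for $n=2$ the group $A_2=\pi_1(F_2)$ is not automatically abelian, and your observation that $\Map(G^{\times n},G)$ is a grouplike $H$-space (so every component has abelian $\pi_1$) is exactly what is needed there.
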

\begin{proof}
Since we have the following commutative diagram, we show $\pi_0(F_{A_n}G)\rightarrow \pi_0(F_{A_n}G_\mathcal{P})$ is finite-to-one.
\begin{align*}
\xymatrix{
\pi _1(M_n(G)) \ar[d] \ar[r]^\sim & \pi _0(F_nG) \ar[d] \\
\pi _1(M_n(G_\mathcal{P})) \ar[r]^\sim & \pi _0(F_nG_\mathcal{P})\\
}
\end{align*}
When $n=1$, this follows by Lemma \ref{HMR2}. 
Assume this holds for $n-1$, $n\geq 2$.
The diagram in Proposition \ref{localization} yields the following commutative ladder:
\begin{align*}
\xymatrix{
\pi_1(F_{A_{n-1}}G) \ar[r] \ar[d] & \pi_0(\varOmega^{n-1}\Map(G^{\times n},G)) \ar[r] \ar[d] & \pi_0(F_{A_n}G) \ar[r] \ar[d] & \pi_0(F_{A_{n-1}}G) \ar[d] \\
\pi_1(F_{A_{n-1}}G_\mathcal{P}) \ar[r] & \pi_0(\varOmega^{n-1}\Map(G_\mathcal{P}^{\times n},G_\mathcal{P})) \ar[r] & \pi_0(F_{A_n}G_\mathcal{P}) \ar[r] & \pi_0(F_{A_{n-1}}G_\mathcal{P})\\
}
\end{align*}
Let us verify the conditions of Lemma \ref{diagram} about this diagram.
The abelian group $\pi_0(\varOmega^{n-1}\Map(G^{\times n},G))$ acts on  $\pi_0(F_{A_n}G)$ through the above homomorphism.
Similarly, $\pi_0(\varOmega^{n-1}\Map(G_\mathcal{P}^{\times n},G_\mathcal{P}))$ acts on $\pi_0(F_{A_n}G_\mathcal{P})$.
Since $G$ is a finite complex and is an $H$-space, $\pi_0(\varOmega^{n-1}\Map(G^{\times n},G))$ is a finitely generated abelian group.
From this and Theorem \ref{HMR1}, the kernel of $\pi_0(\varOmega^{n-1}\Map(G^{\times n},G))\to \pi_0(\varOmega^{n-1}\Map(G_\mathcal{P}^{\times n},G_\mathcal{P}))$ is finite.
The map $\pi_2(M_{n-1}(G))\to \pi_2(M_{n-1}(G_\mathcal{P}))$ is $\bm{Q}$-surjective from the above, then $\pi_1(F_{A_{n-1}}G)\to \pi_1(F_{A_{n-1}}G_\mathcal{P})$ is $\bm{Q}$-surjective.
Thus the condition (i) is verified.
The conditions (ii), (iii) and (iv) (for any $a_3\in \pi_0(F_{A_n}G)$) are easily verified.
Hence $\pi_0(F_{A_n}G)\rightarrow \pi_0(F_{A_n}G_\mathcal{P})$ is finite-to-one.
\end{proof}
\par Now we show the following proposition.
\begin{prp}\label{finone}
$[B;M_n(G)]^*\rightarrow [B;M_n(G_\mathcal{P})]^*$ is finite-to-one.
\end{prp}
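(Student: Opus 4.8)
The plan is to build $B$ from its $1$-skeleton by attaching one cell at a time and, at each stage, to feed the Puppe coexact sequence of the attaching cofibration into Lemma \ref{diagram}, in the same way as the finiteness of $H$-types is proved in \cite{CS00}. The $A_n$-specific input is the information on homotopy groups already obtained: Lemma \ref{pi1} for $\pi_1$, and, for the higher homotopy groups, the combination of Corollary \ref{locstr} with Theorem \ref{HMR1}.

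First I would record that input. From the fibration $F_{A_n}G\to C_n\to M_n(G)$ with $C_n$ $\infty$-connected (\S 5) there is a homotopy equivalence $\varOmega M_n(G)\simeq F_{A_n}G$, so for any connected finite complex $W$ the loop--suspension adjunction gives $[\varSigma W;M_n(G)]^*\cong [W;(F_{A_n}G)_0]^*$, where $(F_{A_n}G)_0$ is the identity component (reached because $W$ is connected). By Corollary \ref{locstr} the induced map $(F_{A_n}G)_0\to (F_{A_n}G_\mathcal{P})_0$ is the $\mathcal{P}$-localization, and it is an $H$-map since up to the above equivalence it is a loop map; hence Theorem \ref{HMR1} shows $[\varSigma W;M_n(G)]^*\to [\varSigma W;M_n(G_\mathcal{P})]^*$ is $\bm{Q}$-bijective. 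Taking $W=S^{k-1}$ with $k\geq 2$, the map $\pi_k(M_n(G))\to \pi_k(M_n(G_\mathcal{P}))$ is $\bm{Q}$-bijective; since $G$ is a finite $H$-space, $\pi_k(M_n(G))$ is finitely generated, so this map has finite kernel. For $k=1$, $\pi_1(M_n(G))\to \pi_1(M_n(G_\mathcal{P}))$ is finite-to-one by Lemma \ref{pi1}.

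Now I would induct on the number of cells, replacing $B$ by a homotopy equivalent CW complex with connected $1$-skeleton whose cells are attached in non-decreasing dimension. For the $1$-skeleton $B^1\simeq\bigvee_{j=1}^{s}S^1$ we have $[B^1;M_n(G)]^*\cong \pi_1(M_n(G))^{\times s}$, and the localization map is the $s$-fold product of the finite-to-one map of Lemma \ref{pi1}, hence finite-to-one. For the inductive step write $B=B'\cup_\phi e^k$ with $k\geq 2$ and $B'$ a connected subcomplex for which the statement is known; the cofibration $S^{k-1}\xrightarrow{\phi}B'\to B$ yields the commutative ladder
\begin{align*}
\xymatrix{
[\varSigma B';M_n(G)]^* \ar[r] \ar[d] & \pi_k(M_n(G)) \ar[r] \ar[d] & [B;M_n(G)]^* \ar[r] \ar[d] & [B';M_n(G)]^* \ar[d]\\
[\varSigma B';M_n(G_\mathcal{P})]^* \ar[r] & \pi_k(M_n(G_\mathcal{P})) \ar[r] & [B;M_n(G_\mathcal{P})]^* \ar[r] & [B';M_n(G_\mathcal{P})]^*\\
}
\end{align*}
whose vertical maps are localization. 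Here the first two horizontal maps are homomorphisms, $\pi_k(M_n(G))$ is a finitely generated abelian group, the orbits of the $\pi_k(M_n(G))$-action on $[B;M_n(G)]^*$ are the fibres of restriction to $B'$, compatibly with localization, and the isotropy group of any class is the image of a suspension map $[\varSigma B';M_n(G)]^*\to \pi_k(M_n(G))$ twisted by that class, which localization carries to the corresponding twisted map for the localized class. Thus hypotheses (ii), (iii) and (iv) of Lemma \ref{diagram} hold, the left vertical map is $\bm{Q}$-surjective, $\ker(\pi_k(M_n(G))\to \pi_k(M_n(G_\mathcal{P})))$ is finite, and $[B';M_n(G)]^*\to [B';M_n(G_\mathcal{P})]^*$ is finite-to-one by induction; Lemma \ref{diagram} then gives that $[B;M_n(G)]^*\to [B;M_n(G_\mathcal{P})]^*$ is finite-to-one, completing the induction.

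The main obstacle is the basepoint bookkeeping: because $M_n(G)$ is not a loop space, the isotropy groups of the $\pi_k$-action on $[B;M_n(G)]^*$ vary with the chosen class and are images of twisted suspension maps, so the statement (and its proof) must be carried through the induction for every choice of basepoint in order to verify hypothesis (iv) of Lemma \ref{diagram} uniformly; and the $\pi_1$-stage must be handled directly via Lemma \ref{pi1}, since $\pi_1(M_n(G))$ need not be abelian and Lemma \ref{diagram} therefore does not apply to a single $1$-cell. A smaller point to set up with care is the equivalence $\varOmega M_n(G)\simeq F_{A_n}G$ coming from the contractibility of $C_n$, as this is what brings the $H$-space statement Theorem \ref{HMR1} into play.
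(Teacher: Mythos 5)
Your proof is correct and follows the same route as the paper: induct over a cell structure starting from the $1$-skeleton (handled via Lemma \ref{pi1}), and at each higher cell feed the Puppe ladder of the cofibration $S^{k-1}\to B'\to B$ into Lemma \ref{diagram}, with the $\bm{Q}$-surjectivity and finite-kernel inputs coming from Corollary \ref{locstr} and Theorem \ref{HMR1} via the equivalence $\varOmega M_n(G)\simeq F_{A_n}G$. You spell out the verification of the hypotheses of Lemma \ref{diagram} (in particular the class-dependent, twisted isotropy subgroups needed for condition (iv) to hold for every $a_3$) in more detail than the paper, which simply asserts that the lemma applies.
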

\begin{proof}
When $B$ is a point, this is trivial. 
When $B$ is 1-dimensional, $B$ is a wedge sum of finite circles.
Then this proposition follows by Lemma \ref{pi1}. 
Assume $B$ is a complex given by attaching the complex $B'$ to one $r$-cell ($r\geq 2$), where the assertion above holds for the complex $B'$. 
Let us consider the following commutative diagram given by the cofibration $B'\rightarrow B\rightarrow S^r$:
\begin{align*}
\xymatrix{
[\varSigma B';M_n(G)]^* \ar[r] \ar[d] & \pi_r(M_n(G)) \ar[r] \ar[d] & [B;M_n(G)]^* \ar[r] \ar[d] & [B';M_n(G)]^* \ar[d] \\ 
[\varSigma B';M_n(G_\mathcal{P})]^* \ar[r] & \pi_r(M_n(G_\mathcal{P})) \ar[r] & [B;M_n(G_\mathcal{P})]^* \ar[r] & [B';M_n(G_\mathcal{P})]^* \\ 
}
\end{align*}
This cofibration induces the actions of $\pi_r(M_n(G))$ on $[B;M_n(G)]^*$ and $\pi_r(M_n(G_\mathcal{P}))$ on $[B;M_n(G_\mathcal{P})]^*$.
Since we can apply Lemma \ref{diagram} to this diagram, $[B,M_n(G)]^*\rightarrow [B,M_n(G_\mathcal{P})]^*$ is finite-to-one.
\end{proof}
\par Here, we consider the case that $G$ is a compact connected Lie group.
\begin{thm}\label{thm}
Let $B$ be a finite connected complex and $G$ be a compact connected Lie group. 
For each $n<\infty$, the number of fibrewise $A_n$-equivalent classes represented by the automorphism bundles of principal $G$-bundle over $B$ is finite.
\end{thm}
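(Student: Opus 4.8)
The plan is to combine the classification theorem for fibrewise $A_n$-spaces (Theorem~\ref{classification}) with the rational triviality of automorphism bundles (Theorem~\ref{triviality}) and the finiteness statement Proposition~\ref{finone}, the last of these applied with $\mathcal P=\emptyset$ so that $G_{\mathcal P}=G_{(0)}$. First I would note that each automorphism bundle $\aut P=P\times_G G$ is a fibrewise topological group, hence a fibrewise $A_\infty$-space, and in particular a fibrewise $A_n$-space with every fibre isomorphic to $G$ as a topological group (Example~\ref{example1}(i)); so it is classified by a free homotopy class $[f_P]\in[B;M_n(G)]$ via pull-back of $E_n(G)$, and by Theorem~\ref{classification} two automorphism bundles are fibrewise $A_n$-equivalent precisely when their classes coincide. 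Hence it suffices to show that only finitely many classes $[f_P]$ occur as $P$ ranges over all principal $G$-bundles over $B$.

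Now $M_n(G)$ fibres over the connected space $BFG$ through a finite tower of Hurewicz fibrations with connected fibres, hence is path-connected, and the same holds for $M_n(G_{(0)})$; so each $f_P$ admits a basepoint-preserving representative $\tilde f_P$. By Theorem~\ref{triviality} the composite of $f_P$ with $q\colon M_n(G)\to M_n(G_{(0)})$ is null-homotopic, and since $M_n(G_{(0)})$ is connected a basepoint-preserving map into it that is freely null-homotopic is pointed-null-homotopic (the constant map is fixed by the $\pi_1$-action on the pointed homotopy set, whose orbits are the free homotopy classes). Therefore $[\tilde f_P]$ lies in $q_*^{-1}(\ast)\subset[B;M_n(G)]^*$. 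By Proposition~\ref{finone} applied with $\mathcal P=\emptyset$, the map $q_*\colon[B;M_n(G)]^*\to[B;M_n(G_{(0)})]^*$ is finite-to-one, so $q_*^{-1}(\ast)$ is a finite set; its image under the surjection $[B;M_n(G)]^*\to[B;M_n(G)]$ is then a finite subset of $[B;M_n(G)]$ containing every $[f_P]$, and the theorem follows.

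I do not expect a real obstacle in this final step: the substantive work lies in the earlier sections — the construction of $M_n(G)$ and Theorem~\ref{classification}, the compatibility of fibrewise localization with the tower $M_n(G)\to\cdots\to BFG$ and Corollary~\ref{locstr}, the rational triviality Theorem~\ref{triviality}, and the derivation of Proposition~\ref{finone} from the diagram Lemma~\ref{diagram} together with the localization theorems of~\cite{HMR75}. The one place here that calls for a little care is the passage between pointed and free homotopy classes, which is handled by the connectedness of $M_n(G)$ and $M_n(G_{(0)})$ noted above; beyond that, the argument is purely formal.
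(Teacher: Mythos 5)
Your proposal is correct and follows essentially the same route as the paper: apply Theorem~\ref{triviality} to see that the rationalization of the classifying map of $\aut P$ is null-homotopic, upgrade to a pointed null-homotopy using connectedness of $M_n(G_{(0)})$, invoke Proposition~\ref{finone} with $\mathcal{P}=\emptyset$ to conclude there are only finitely many pointed classes, and finish via the classification theorem. You have merely made explicit the passage between free and pointed homotopy sets and the connectedness of $M_n(G)$, both of which the paper treats tersely.
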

\begin{proof}
From Proposition \ref{triviality}, the classifying map of the fibrewise rationalization of an automorphism bundle is null-homotopic. 
Then this classifying map is also null-homotopic preserving basepoint. 
From Proposition \ref{finone}, there exist only finitely many classifying maps $B\rightarrow M_n(G)$ up to homotopy which correspond to some automorphism bundle.
Hence the conclusion follows.
\end{proof}
\par In general, if $(E,\{m_i\})$ is a fibrewise $A_n$-space, then the space of all sections $\varGamma(E)$ of $E$ is naturally an $A_n$-space, where the $A_n$-form $\{\varGamma m_i\}$ of $\varGamma(E)$ is given by $\varGamma m_i(\rho ;\varphi_1,\cdots,\varphi_i)(b)=m_i(\rho;\varphi_1(b),\cdots,\varphi_i(b))$.
If $E$ and $E'$ is a fibrewise $A_n$-space over $B$ and $f:E\rightarrow E'$ is a fibrewise $A_n$-map, then the map $\varGamma f:\varGamma (E)\rightarrow \varGamma (E')$ given by $(\varGamma f)(\varphi)(b)=f\varphi(b)$ is an $A_n$-map.
Moreover, if $f$ is a fibrewise $A_n$-equivalence, then $\varGamma f$ is an $A_n$-equivalence.
\par
Let $G$ be a compact connected Lie group.
Since $\mathcal{G}(P)$ is isomorphic to $\varGamma(\aut P)$ for any principal bundle $P$, Theorem \ref{thm} implies Theorem \ref{mainthm}.

\section{A counterexample when $n=\infty$}
In Theorem \ref{mainthm}, it is essential to assume $n$ is finite.
Kono and Tsukuda have shown the following theorem:
\begin{thm}[(\cite{KT00} and \cite{Tsu01})]
Let $X$ be an oriented simply connected closed 4-manifold and $P$ and $P'$ be principal $\SU(2)$-bundles over $X$.
The classifying spaces $B\mathcal{G}(P)$ and $B\mathcal{G}(P')$ are homotopy equivalent if and only if
\begin{align*}
\left\{
\begin{array}{ll}
|c_2(P)[X]|=|c_2(P')[X]| & {\rm if}\, X\, {\rm admits\, an\, orientation\, reversing\, homotopy\, equivalence,}\\
c_2(P)[X]=c_2(P')[X] & {\rm otherwise},\\
\end{array}
\right.
\end{align*}
where $c_2(P)[X]$ represents the coupling of the second Chern class of $P$ and the fundamental class of $X$. 
\end{thm}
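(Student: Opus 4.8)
The plan is to translate the statement about gauge groups into the homotopy theory of mapping spaces. By a theorem of Atiyah--Bott and Gottlieb, $B\mathcal{G}(P)$ is homotopy equivalent to the path component $\Map_{f_P}(X,B\SU(2))$ of $\Map(X,B\SU(2))$ containing the classifying map $f_P$ of $P$. Since $B\SU(2)=\mathbb{HP}^\infty$ is $3$-connected with next cell in dimension $8$, the inclusion $S^4=\mathbb{HP}^1\hookrightarrow\mathbb{HP}^\infty$ induces a bijection on homotopy classes of maps out of the $4$-complex $X$, so $[X,B\SU(2)]\cong[X,S^4]\cong H^4(X;\bm{Z})\cong\bm{Z}$ via $c_2$ by the Hopf--Whitney theorem; the same reasoning classifies principal $\SU(2)$-bundles over $X$ by $c_2$. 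Thus the components of $\Map(X,B\SU(2))$ are indexed by the integer $c=c_2(P)[X]$, and the theorem is precisely the complete homotopy classification of these components, which I write $\Map_c$.

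The ``if'' direction is soft. If $c_2(P)[X]=c_2(P')[X]$, then $P\cong P'$ as principal bundles, so $B\mathcal{G}(P)\cong B\mathcal{G}(P')$. If $X$ admits an orientation-reversing homotopy self-equivalence $h$ and $c_2(P')[X]=-c_2(P)[X]$, then $h^*$ acts by $-1$ on $H^4(X;\bm{Z})\cong\bm{Z}$, so $(h^*c_2(P))[X]=-c_2(P)[X]=c_2(P')[X]$ and hence $h^*P\cong P'$; moreover precomposition with $h$ is a homotopy equivalence of $\Map(X,B\SU(2))$ carrying $\Map_c$ onto $\Map_{-c}$, so $B\mathcal{G}(P)\simeq B\mathcal{G}(P')$.

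For the converse I would construct homotopy invariants of $\Map_c$ from the evaluation fibration $\Map^*_c(X,B\SU(2))\to\Map_c(X,B\SU(2))\stackrel{\mathrm{ev}}{\to}B\SU(2)$. Rationally this fibration, and hence $\Map_c$ itself, is independent of $c$ (the fibrewise rationalization of $\aut P$ is trivial, cf.\ \S 7), so whatever separates the $\Map_c$ must be torsion; and the torsion is controlled by the clutching of the fibration, which up to a fixed universal factor is linear in $c$, being computed from a Samelson product of the generator of $\pi_3(\SU(2))$ with the class $c\in\pi_3(\SU(2))$ of $P$. On the fibre side, writing $X\simeq(\bigvee^{b_2}S^2)\cup_\phi e^4$ and applying $\Map^*(-,B\SU(2))$ to the cofibre sequence $\bigvee^{b_2}S^2\hookrightarrow X\to S^4\stackrel{\varSigma\phi}{\to}\bigvee^{b_2}S^3\to\cdots$ exhibits $\Map^*(X,B\SU(2))$ as a fibration with fibre $\varOmega^3S^3=\Map^*(S^4,B\SU(2))$ over $\prod^{b_2}\varOmega S^3=\Map^*(\bigvee^{b_2}S^2,B\SU(2))$, the twisting being determined by the intersection form $\phi$. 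Combining the two fibrations expresses the homotopy and cohomology of $\Map_c(X,B\SU(2))$ in terms of $c$, $\phi$, and the low-dimensional homotopy of $S^3$, and the aim is to extract from this a numerical invariant of the bare homotopy type of $\Map_c$ equal to $c$ --- or to $|c|$ when $X$ admits an orientation-reversing self-equivalence --- from which the theorem follows.

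The main obstacle is that the torsion produced this way is a priori sensitive only to $c$ modulo a fixed integer. Already for $X=S^4$ the connecting homomorphism $\partial_c\colon\bm{Z}=\pi_4(B\SU(2))\to\pi_3(\varOmega^3S^3)=\pi_6(S^3)=\bm{Z}/12$ is multiplication by $\pm c$, so $\pi_3\Map_c(S^4,B\SU(2))=\coker\partial_c\cong\bm{Z}/\gcd(12,c)$; this is a homotopy group of the gauge group $\mathcal{G}(P_c)=\varOmega B\mathcal{G}(P_c)$, it already distinguishes the six homotopy types of such gauge groups found by Kono, and the homotopy groups of $\Map_c(S^4,B\SU(2))$ in low degrees see no more than $\gcd(12,c)$. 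Pinning down $|c|$ exactly then forces one either to use the free part of $\pi_4\Map_c(S^4,B\SU(2))$, whose image under $\mathrm{ev}_*$ in $\pi_4(B\SU(2))=\bm{Z}$ is $\tfrac{12}{\gcd(12,c)}\bm{Z}$, or to combine the $k$-invariants of the whole Postnikov tower; when $b_2(X)>0$ one can instead exploit the $\varOmega S^3$-factors contributed by the $2$-cells of $X$ to divide out the residual ambiguity, the asymmetry of the intersection form being what allows $c$ and $-c$ to be distinguished when $X$ has no orientation-reversing self-equivalence. Arranging all of this so that the final invariant is manifestly intrinsic to $\Map_c(X,B\SU(2))$ --- in particular without presupposing that the given homotopy equivalence is compatible with the evaluation map --- is where the real difficulty lies, and is carried out in \cite{KT00} and \cite{Tsu01}.
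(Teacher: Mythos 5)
The theorem is quoted in the paper from \cite{KT00} and \cite{Tsu01} with no proof given, so there is nothing in the paper for you to be compared against; the paper treats it as a black box. Your translation and ``if'' direction are correct and complete: $B\mathcal{G}(P)\simeq \Map_{f_P}(X,B\SU(2))$ by Atiyah--Bott/Gottlieb, bundles and components are classified by $c=c_2(P)[X]$, and precomposition by an orientation-reversing self-equivalence carries $\Map_c$ to $\Map_{-c}$. Your $S^4$ computation $\pi_3\Map_c(S^4,B\SU(2))\cong\bm{Z}/\gcd(12,c)$ via the evaluation fibration and the connecting map $\partial_c=\pm c$ on $\pi_6(S^3)\cong\bm{Z}/12$ is also correct, and you correctly observe that this (together with the other low-dimensional homotopy groups, and the image $\tfrac{12}{\gcd(12,c)}\bm{Z}$ of $\mathrm{ev}_*$ on $\pi_4$) only sees $\gcd(12,c)$, which recovers Kono's six types of $\mathcal{G}(P)$ but is far too coarse for the classification of the $B\mathcal{G}(P)$ asserted here.

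The genuine gap is that the ``only if'' direction is not carried out. You correctly diagnose the difficulty --- the torsion detectable from the evaluation fibration and the cofibre sequence is a priori only $c$ modulo a fixed integer --- and you list candidate devices (free part of $\pi_4$, $k$-invariants of the Postnikov tower, the $\varOmega S^3$ factors contributed by the $2$-cells and the twisting by the intersection form). But you never construct a concrete numerical invariant, never verify that it is determined by the bare homotopy type of $\Map_c(X,B\SU(2))$ without presupposing compatibility with the evaluation map or the inclusion of the basepoint component, and never show that it equals $|c|$ when $X$ has an orientation-reversing self-equivalence and $c$ otherwise. You explicitly defer all of this to \cite{KT00} and \cite{Tsu01}; that deferral is precisely the theorem. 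What you have is an accurate and useful reduction plus a plausible strategy outline, not a proof.
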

\par
Stasheff has shown in \cite{Sta63b} that a map between topological monoids is an $A_n$-map if and only if it admits an appropriate map between $A_n$-structures.
From this, $\mathcal{G}(P)$ and $\mathcal{G}(P')$ are $A_\infty$-equivalent if and only if $B\mathcal{G}(P)$ and $B\mathcal{G}(P')$ are homotopy equivalent.
Hence we conclude the following counterexample when $n=\infty$.
\begin{prp}\label{counterexample}
For a simply connected closed 4-manifold $X$, there are infinite distinct $A_\infty$-types of gauge groups of principal $\SU(2)$-bundles over $X$.
\end{prp}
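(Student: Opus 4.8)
The plan is to exhibit an explicit infinite family of principal $\SU(2)$-bundles over $X$ whose gauge groups are pairwise non-$A_\infty$-equivalent, using the Kono--Tsukuda classification of the classifying spaces $B\mathcal{G}(P)$ together with the $A_\infty$/classifying-space correspondence recalled just above.

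First I would recall the classification of the relevant bundles. Since $B\SU(2)$ is $3$-connected with $\pi_4(B\SU(2))\cong\pi_3(\SU(2))\cong\bm{Z}$, obstruction theory gives, for any CW complex $X$ of dimension at most $4$, a bijection $[X;B\SU(2)]\cong H^4(X;\bm{Z})$; for $X$ a simply connected closed $4$-manifold this group is $H^4(X;\bm{Z})\cong\bm{Z}$, the isomorphism being $P\mapsto c_2(P)[X]$. So there is a family $\{P_k\}_{k\in\bm{Z}_{\geq 0}}$ of principal $\SU(2)$-bundles over $X$ with $c_2(P_k)[X]=k$. By the theorem of Kono and Tsukuda stated above, $B\mathcal{G}(P_k)$ and $B\mathcal{G}(P_l)$ are homotopy equivalent only if $|c_2(P_k)[X]|=|c_2(P_l)[X]|$, i.e.\ only if $k=l$ (we have restricted to nonnegative $k$). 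Hence the spaces $B\mathcal{G}(P_k)$, $k\geq 0$, realize infinitely many distinct homotopy types.

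It remains to transfer this to $A_\infty$-types. The gauge group $\mathcal{G}(P)$ is a topological monoid with the homotopy type of a CW complex, so by Stasheff's theorem in \cite{Sta63b}, as recalled above, $\mathcal{G}(P_k)$ and $\mathcal{G}(P_l)$ are $A_\infty$-equivalent if and only if $B\mathcal{G}(P_k)$ and $B\mathcal{G}(P_l)$ are homotopy equivalent. Combining with the previous paragraph, the gauge groups $\mathcal{G}(P_k)$, $k\geq 0$, lie in infinitely many $A_\infty$-equivalence classes, which is the assertion. The argument is short because the substantive input is the cited work; the points needing care are the verification of the hypotheses — that $X$ is exactly of the type handled by Kono--Tsukuda, that the Chern-number classification genuinely produces infinitely many distinct bundles, and that $\mathcal{G}(P)$ has CW homotopy type so Stasheff's recognition applies. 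The only mildly delicate point is the two-sidedness of the correspondence: that an $A_\infty$-equivalence of monoids induces a homotopy equivalence of classifying spaces is immediate from homotopy invariance of the bar construction, whereas the converse — promoting a homotopy equivalence $B\mathcal{G}(P_k)\simeq B\mathcal{G}(P_l)$ to an $A_\infty$-equivalence of gauge groups — is precisely the content of \cite{Sta63b} and must be invoked in that direction.
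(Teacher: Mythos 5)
Your proof is correct and follows essentially the same route as the paper: classify bundles by $c_2(P)[X]$, invoke the Kono--Tsukuda theorem to get infinitely many distinct homotopy types of $B\mathcal{G}(P_k)$, and use Stasheff's result from \cite{Sta63b} to pass between $A_\infty$-equivalence of gauge groups and homotopy equivalence of their classifying spaces. The extra care you take (restricting to nonnegative $k$, noting the bundle classification $[X;B\SU(2)]\cong H^4(X;\bm{Z})\cong\bm{Z}$, and flagging which direction of the Stasheff correspondence is the nontrivial one) is sound but does not change the structure of the argument.
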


\section{The gauge groups of principal $\SU(2)$-bundles over $S^4$}
In general, it is difficult problem to count the number of $A_n$-types of gauge groups.
However, one can often partially know the behavior of the localizations of $A_n$-types of gauge groups.
\par 
Denote the principal $\SU(2)$-bundle over $S^4$ with second Chern class $k\in H^4(S^4)\simeq \bm{Z}$ by $P_k$.
For a family of prime numbers $\mathcal{P}$, let $P_{k,\mathcal{P}}$ be the principal $\SU(2)_\mathcal{P}$-bundle over $S^4$ with second Chern class $k\in H^4(S^4;\bm{Z}_\mathcal{P})\simeq \bm{Z}_\mathcal{P}$, where $\bm{Z}_\mathcal{P}$ is the localization of the ring $\bm{Z}$ at $\mathcal{P}$. 
Here we remark that $\SU(2)_\mathcal{P}$ can be taken as a topological group with homotopy type of a CW complex \cite{Mil56}.
Then, using the same construction as in \S 7, the fibrewise $\mathcal{P}$-localization of $\aut P_k$ as a fibrewise $A_\infty$-space is $\aut P_{k,\mathcal{P}}$.
\par
Denote the identity component of the gauge group $\mathcal{G}(P_{k,\mathcal{P}})$ by $\mathcal{G}_0(P_{k,\mathcal{P}})$ and the kernel of the evaluation at the basepoint $ev:\mathcal{G}_0(P_{k,\mathcal{P}})\rightarrow \SU(2)_\mathcal{P}$ by $\mathcal{G}_{0,0}(P_{k,\mathcal{P}})$.
Atiyah and Bott \cite{AB83} constructed the universal bundle of the gauge group
\begin{align*}
\mathcal{G}(P_{k,\mathcal{P}})\rightarrow E\mathcal{G}(P_{k,\mathcal{P}})\rightarrow \Map(S^4;\bm{H}P^\infty_\mathcal{P};k),
\end{align*}
where $E\mathcal{G}(P_{k,\mathcal{P}})$ is the space of all bundle maps from $P_{k,\mathcal{P}}$ to the universal bundle over $\bm{H}P^\infty_\mathcal{P}$ and $\Map(S^4;\bm{H}P^\infty_\mathcal{P};k)$ is the path component of $\Map(S^4;\bm{H}P^\infty_\mathcal{P})$ corresponding to $\ell k:S^4\rightarrow \bm{H}P^\infty_\mathcal{P}$.
Then, the $\mathcal{P}$-localization of $\mathcal{G}_0(P_k)$ as an $A_\infty$-space is $\mathcal{G}_0(P_{k,\mathcal{P}})$.
\begin{dfn}
A fibrewise $A_n$-space $E$ over $B$ is said to be {\it trivial} if $E$ is fibrewise $A_n$-equivalent to the fibrewise $A_n$-space $B\times E_b$ for some $b\in B$.
\end{dfn}
\par
As in \cite{KK}, $\aut P_{k,\mathcal{P}}$ is trivial as a fibrewise $A_n$-space if and only if there exists a map $f:S^4\times \bm{H}P^{n}_\mathcal{P}\rightarrow \bm{H}P^\infty_\mathcal{P}$ such that the following diagram commutes up to homotopy:
\begin{align*}
\xymatrix{
S^4 \vee \bm{H}P^n_\mathcal{P} \ar[r]^-{\ell k\vee i} \ar[d]_{j} & \bm{H}P^\infty_\mathcal{P}\vee \bm{H}P^\infty_\mathcal{P} \ar[d]^{\nabla} \\
S^4 \times \bm{H}P^n_\mathcal{P} \ar[r]^-{f} & \bm{H}P^\infty_\mathcal{P}
}
\end{align*}
where $k:S^4\rightarrow \bm{H}P^\infty$ is the classifying map of $P_k$, $\ell:\bm{H}P^\infty \rightarrow \bm{H}P^\infty_\mathcal{P}$ is the $\mathcal{P}$-localization of $\bm{H}P^\infty$, $i$ and $j$ are inclusions and $\nabla$ is the folding map.
Moreover, the following proposition holds.
\begin{prp}\label{trivialgaugegroup}
The gauge group $\mathcal{G}(P_{k,\mathcal{P}})$ of $P_{k,\mathcal{P}}$ is $A_n$-equivalent to the gauge group $\mathcal{G}(S^4\times \SU(2)_\mathcal{P})$ of the trivial bundle if and only if $\aut P_{k,\mathcal{P}}$ is trivial as a fibrewise $A_n$-space. 
\end{prp}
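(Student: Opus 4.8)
The plan is to prove the two implications separately; the forward (``if'') implication is a formal consequence of functoriality of the section construction, and essentially all the content is in the converse. For the ``if'' direction, suppose $\aut P_{k,\mathcal{P}}$ is trivial, so there is a fibrewise $A_n$-equivalence $\aut P_{k,\mathcal{P}}\to S^4\times\SU(2)_\mathcal{P}$. Applying the section functor $\varGamma$ and invoking the fact recorded at the end of \S 8 that $\varGamma$ carries fibrewise $A_n$-maps to $A_n$-maps and fibrewise $A_n$-equivalences to $A_n$-equivalences, we obtain an $A_n$-equivalence $\varGamma(\aut P_{k,\mathcal{P}})\to\varGamma(S^4\times\SU(2)_\mathcal{P})$. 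Since $\varGamma(\aut P_{k,\mathcal{P}})\cong\mathcal{G}(P_{k,\mathcal{P}})$ as topological groups and $\varGamma(S^4\times\SU(2)_\mathcal{P})=\Map(S^4,\SU(2)_\mathcal{P})=\mathcal{G}(S^4\times\SU(2)_\mathcal{P})$ with its pointwise $A_n$-form, this is precisely the asserted $A_n$-equivalence of gauge groups.

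For the ``only if'' direction, assume $\mathcal{G}(P_{k,\mathcal{P}})$ is $A_n$-equivalent to $\mathcal{G}(S^4\times\SU(2)_\mathcal{P})$. By the classification theorem (Theorem \ref{classification}) with $B=S^4$ and $G=\SU(2)_\mathcal{P}$, the fibrewise $A_n$-type of $\aut P_{k,\mathcal{P}}$ is represented by a classifying map $\varphi_k\colon S^4\to M_n(\SU(2)_\mathcal{P})$, and $\aut P_{k,\mathcal{P}}$ is trivial exactly when $\varphi_k$ is null-homotopic; so it suffices to derive $\varphi_k\simeq\ast$ from the hypothesis. I would argue by passing to classifying data: via Stasheff's correspondence between $A_n$-maps and maps of $A_n$-structures (the one already used in \S 9), the given $A_n$-equivalence yields a homotopy equivalence between the associated fibrewise Stasheff $n$-th projective spaces over $S^4$, i.e.\ fibrations with fibre $P_n(\SU(2)_\mathcal{P})=\bm{H}P^n_\mathcal{P}$, compatible with their inclusions into the ambient $\bm{H}P^\infty_\mathcal{P}$-fibrations whose section spaces recover $B\mathcal{G}(P_{k,\mathcal{P}})\simeq\Map(S^4;\bm{H}P^\infty_\mathcal{P};k)$ in the Atiyah--Bott model. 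Tracing this compatibility on the clutching function over the single top cell of $S^4$, one extracts a map $f\colon S^4\times\bm{H}P^n_\mathcal{P}\to\bm{H}P^\infty_\mathcal{P}$ fitting into the homotopy-commutative square displayed just before the proposition; by the criterion of \cite{KK} recalled there, the existence of such an $f$ forces $\aut P_{k,\mathcal{P}}$ to be trivial, hence $\varphi_k\simeq\ast$.

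The main obstacle is the middle step of the converse: showing that the $A_n$-structure of the section space (the gauge group), together with its underlying fibration over $S^4$, retains enough information to pin down the fibrewise $A_n$-type of $\aut P_{k,\mathcal{P}}$. This is not formal --- in general $\varGamma$ is far from faithful --- and I expect one must exploit that $S^4$ has a single cell above dimension zero together with the $\infty$-connectedness of $C_n$ from \S 5, so that the obstruction to triviality lives in a single homotopy group of $F_{A_n}\SU(2)_\mathcal{P}$, and then use the localization results of \S 6--7 and the rational triviality of \S 7 (Theorem \ref{triviality}) to control that group. The bookkeeping that converts ``compatibility of $A_n$-structures'' into ``commutativity of the wedge-into-product diagram'' is where I would expect the technical effort to be concentrated.
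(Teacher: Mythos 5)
Your ``if'' direction is the same as the paper's and is correct: apply $\varGamma$ to the assumed fibrewise $A_n$-equivalence and invoke the remark at the end of \S 8.

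Your ``only if'' direction takes a genuinely different route, and you have honestly flagged the place where it is incomplete: you want to recover the fibrewise $A_n$-type of $\aut P_{k,\mathcal{P}}$ from the $A_n$-type of its section space by passing to classifying data, Stasheff projective spaces, and the Atiyah--Bott model, and you observe correctly that this is not formal since $\varGamma$ is far from faithful. That middle step --- converting ``$A_n$-equivalence of gauge groups'' into ``homotopy equivalence of fibrewise $n$-th projective spaces over $S^4$, compatible with the $\bm{H}P^\infty_\mathcal{P}$-maps'' --- is exactly where the argument stalls, and nothing in the paper's machinery as quoted fills it; moreover the Stasheff projective-space functor is set up for $A_n$-spaces, not for extracting fibrewise structure from a section space, so the translation you sketch would need a substantial additional lemma.

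The paper avoids all of this with a much more elementary move in the reverse direction. Rather than trying to reconstruct $\aut P_{k,\mathcal{P}}$ from $\mathcal{G}(P_{k,\mathcal{P}})$, one starts from the assumed $A_n$-equivalence $F\colon\mathcal{G}(S^4\times\SU(2)_\mathcal{P})\to\mathcal{G}(P_{k,\mathcal{P}})$, precomposes with the constant-section map $s\colon\SU(2)_\mathcal{P}\to\mathcal{G}(S^4\times\SU(2)_\mathcal{P})$, $s(g)(b)=(b,g)$ (which is a homomorphism, hence an $A_n$-homomorphism), and then takes the fibrewise adjoint $f\colon S^4\times\SU(2)_\mathcal{P}\to\aut P_{k,\mathcal{P}}$, $f(b,g)=F(s(g))(b)$. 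The adjunction $\map_B(B\times G,E)\cong\Map(G,\varGamma(E))$ is what makes $f$ a fibrewise $A_n$-map, and this only uses one direction of the $\varGamma$-correspondence, so no faithfulness is needed. It then remains to check that $f$ is a fibrewise homotopy equivalence, which reduces to showing $\mathrm{ev}\circ F\circ s\colon\SU(2)_\mathcal{P}\to\SU(2)_\mathcal{P}$ is a homotopy equivalence; this is done by chasing $\pi_2$ and $\pi_3$ around the evaluation fibration $\mathcal{G}_{0,0}(P_{k,\mathcal{P}})\to\mathcal{G}_0(P_{k,\mathcal{P}})\to\SU(2)_\mathcal{P}$, using that the $\pi_i(\mathcal{G}_{0,0})$ are finite. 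This is both shorter and sidesteps the non-faithfulness issue entirely. If you want to keep your classifying-space picture, you would need to supply the missing lemma; otherwise, the adjunction trick is the cleaner path.
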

\begin{proof}
We identify the gauge group $\mathcal{G}(P_{k,\mathcal{P}})$ with the space $\varGamma (\aut P_{k,\mathcal{P}})$ of sections.
Let $F:\mathcal{G}(S^4\times \SU(2)_\mathcal{P})\rightarrow \mathcal{G}(P_{k,\mathcal{P}})$ be an $A_n$-equivalence and define the fibrewise $A_n$-map $f:B\times \SU(2)_\mathcal{P}\rightarrow \aut P_{k,\mathcal{P}}$ by $f(b,g)=f(s(g))(b)$, where $s:\SU(2)_\mathcal{P}\rightarrow \mathcal{G}(S^4\times \SU(2)_\mathcal{P})$ is the standard section given by $s(g)(b)=(b,g)$.
Let us see that $f$ is a fibrewise $A_n$-equivalence, equivalently, $ev F s:\SU(2)_\mathcal{P}\rightarrow \SU(2)_\mathcal{P}$ is a homotopy equivalence.
Consider the following evaluation fibration:
\begin{align*}
\mathcal{G}_{0,0}(P_{k,\mathcal{P}})\rightarrow \mathcal{G}_0(P_{k,\mathcal{P}})\rightarrow \SU(2)_\mathcal{P}.
\end{align*}
Note that $\pi_i(\mathcal{G}_{0,0}(S^4\times \SU(2)_\mathcal{P}))$ and $\pi_i(\mathcal{G}_{0,0}(P_{k,\mathcal{P}}))$ are isomorphic finite groups for each $i$.
Since $F_*:\pi_2(\mathcal{G}_0(S^4\times \SU(2)_\mathcal{P}))\rightarrow \pi_2(\mathcal{G}_0(P_{k,\mathcal{P}}))$ is an isomorphism and these groups are finite, $\pi_2(\mathcal{G}_{0,0}(P_{k,\mathcal{P}}))\rightarrow \pi_2(\mathcal{G}_0(P_{k,\mathcal{P}}))$ is also an isomorphism.
Hence $ev_*:\pi_3(\mathcal{G}_0(P_{k,\mathcal{P}}))\rightarrow \pi_3(\SU(2)_\mathcal{P})$ has a section.
Then one can see that $(evFs)_*:\pi_3(\SU(2)_\mathcal{P})\rightarrow \pi_3(\SU(2)_\mathcal{P})$ is an isomorphism.
Therefore, $ev F s:\SU(2)_\mathcal{P}\rightarrow \SU(2)_\mathcal{P}$ is a homotopy equivalence.
\end{proof}
\begin{prp}[(Lemma 2.2 in \cite{Tsu01})]\label{primetop}
If $r\in \bm{Z}$ is prime to every element of $\mathcal{P}$, then $\mathcal{G}(P_{kr,\mathcal{P}})$ is $A_\infty$-equivalent to $\mathcal{G}(P_{k,\mathcal{P}})$.
\end{prp}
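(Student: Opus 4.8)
The plan is to reduce the statement to a comparison of mapping spaces and then exploit that $r$ becomes invertible after $\mathcal{P}$-localization. By Stasheff's theorem recalled in \S 9, $\mathcal{G}(P_{kr,\mathcal{P}})$ and $\mathcal{G}(P_{k,\mathcal{P}})$ are $A_\infty$-equivalent if and only if $B\mathcal{G}(P_{kr,\mathcal{P}})$ and $B\mathcal{G}(P_{k,\mathcal{P}})$ are homotopy equivalent, and by the Atiyah--Bott construction recalled above one has $B\mathcal{G}(P_{m,\mathcal{P}})\simeq \Map(S^4;\bm{H}P^\infty_\mathcal{P};m)$, the path component of $\ell m:S^4\to \bm{H}P^\infty_\mathcal{P}$. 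So it suffices to produce a homotopy equivalence $\Map(S^4;\bm{H}P^\infty_\mathcal{P};k)\simeq \Map(S^4;\bm{H}P^\infty_\mathcal{P};kr)$.

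First I would fix a basepoint-preserving self-map $\rho_r:S^4\to S^4$ of degree $r$, chosen up to homotopy as the $r$-fold fold map composed with a pinch map, and consider precomposition $\rho_r^{\#}:\Map(S^4;\bm{H}P^\infty_\mathcal{P})\to \Map(S^4;\bm{H}P^\infty_\mathcal{P})$, $g\mapsto g\circ\rho_r$. Precomposition with a degree-$r$ self-map of $S^4$ is multiplication by $r$ on $[S^4;\bm{H}P^\infty_\mathcal{P}]=\pi_4(\bm{H}P^\infty_\mathcal{P})\cong \bm{Z}_\mathcal{P}$, so $\rho_r^{\#}$ carries the component indexed by $m$ to the one indexed by $rm$; in particular it restricts to a map $\Map(S^4;\bm{H}P^\infty_\mathcal{P};k)\to \Map(S^4;\bm{H}P^\infty_\mathcal{P};kr)$.

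It then remains to check that this restriction is a homotopy equivalence, and here the hypothesis that $r$ is prime to every element of $\mathcal{P}$, hence a unit in $\bm{Z}_\mathcal{P}$, enters. I would compare, for $g$ and for $g\circ\rho_r$, the evaluation fibrations $\Map^*(S^4;\bm{H}P^\infty_\mathcal{P};g)\to \Map(S^4;\bm{H}P^\infty_\mathcal{P};g)\to \bm{H}P^\infty_\mathcal{P}$ given by evaluation at the basepoint of $S^4$. Since $\rho_r$ preserves the basepoint, $\rho_r^{\#}$ covers the identity of $\bm{H}P^\infty_\mathcal{P}$; and, because of the chosen factorization of $\rho_r$ through a pinch and a fold, on a fibre, which is a path component of $\Omega^4\bm{H}P^\infty_\mathcal{P}$, the map $\rho_r^{\#}$ is the restriction of the $r$-th power map of the loop space $\Omega^4\bm{H}P^\infty_\mathcal{P}$, hence induces multiplication by $r$ on $\pi_j$, that is on $\pi_{j+4}(\bm{H}P^\infty_\mathcal{P})$. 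As $\bm{H}P^\infty_\mathcal{P}$ is $\mathcal{P}$-local, every $\pi_{j+4}(\bm{H}P^\infty_\mathcal{P})$ is a $\bm{Z}_\mathcal{P}$-module, so multiplication by $r$ is an isomorphism; by the five lemma applied to the ladder of long exact homotopy sequences, $\rho_r^{\#}$ induces isomorphisms on all homotopy groups of the corresponding path components. Since these mapping spaces are of CW homotopy type, $\rho_r^{\#}$ restricts to a homotopy equivalence $\Map(S^4;\bm{H}P^\infty_\mathcal{P};k)\simeq \Map(S^4;\bm{H}P^\infty_\mathcal{P};kr)$, and with the first paragraph this finishes the proof.

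The genuine content lies in the third paragraph, i.e. the identification of $\rho_r^{\#}$ on each fibre with the $r$-th power map together with the $\bm{Z}_\mathcal{P}$-module observation; the rest is assembly of results already at hand, the main bookkeeping being the identification of path components, which is governed by the single obstruction group $\pi_4$. A more hands-on alternative avoids the classifying space entirely: since $P_{kr}=\rho_r^{*}P_k$, and both the automorphism-bundle construction and fibrewise $\mathcal{P}$-localization commute with pullback along maps of the base, $\rho_r$ induces a homomorphism of topological groups $\mathcal{G}(P_{k,\mathcal{P}})=\varGamma(\aut P_{k,\mathcal{P}})\to \varGamma(\aut P_{kr,\mathcal{P}})=\mathcal{G}(P_{kr,\mathcal{P}})$, which is automatically an $A_\infty$-homomorphism by Example \ref{example1}(iv); the same five-lemma argument, carried out fibrewise on the analogous evaluation fibration (with fibre $\SU(2)_\mathcal{P}$), shows this homomorphism is a homotopy equivalence, yielding the $A_\infty$-equivalence directly.
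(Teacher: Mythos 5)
Your proof is correct and follows essentially the same route as the paper's: precompose with a degree-$r$ self-map of $S^4$ to obtain a map between the Atiyah--Bott evaluation fibrations covering the identity of $\bm{H}P^\infty_\mathcal{P}$, check that the induced map on fibres is a homotopy equivalence, and conclude via the five lemma. You supply more detail than the paper at the key step, namely the identification of $\rho_r^{\#}$ on a fibre as the $r$-th power map of the (homotopy commutative, $\mathcal{P}$-local) $4$-fold loop space, which then acts as a unit of $\bm{Z}_\mathcal{P}$ on each homotopy group; the paper simply asserts that the fibre map is an equivalence. Your closing remark about the pullback $P_{kr}=\rho_r^*P_k$ inducing an $A_\infty$-homomorphism of gauge groups directly is a genuinely different and slightly more hands-on variant that avoids the passage through classifying spaces, and is a worthwhile observation, though the two arguments encode the same underlying five-lemma computation.
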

\begin{proof}
The map $S^4\rightarrow S^4$ with degree $r$ induces the following homotopy commutative diagram:
\begin{align*}
\xymatrix{
\varOmega_0^4\bm{H}P^\infty _\mathcal{P} \ar[d] \ar[r] & \Map (S^4;\bm{H}P^\infty _\mathcal{P};kr) \ar[d] \ar[r] & \bm{H}P^\infty _\mathcal{P} \ar@{=}[d] \\
\varOmega_0^4\bm{H}P^\infty _\mathcal{P} \ar[r] & \Map (S^4;\bm{H}P^\infty _\mathcal{P};k) \ar[r] & \bm{H}P^\infty _\mathcal{P} \\
}
\end{align*}
where $\varOmega_0^4\bm{H}P^\infty _\mathcal{P}\rightarrow \varOmega_0^4\bm{H}P^\infty _\mathcal{P}$ is a homotopy equivalence.
Hence $\Map (S^4;\bm{H}P^\infty _\mathcal{P};kr)$ and $\Map (S^4;\bm{H}P^\infty _\mathcal{P};k)$ are homotopy equivalent.
Therefore, $\mathcal{G}(P_{kr,\mathcal{P}})$ and $\mathcal{G}(P_{k,\mathcal{P}})$ are $A_\infty$-equivalent.
\end{proof}
\par
In the rest of this section, we give an observation on the fibrewise $A_n$-types, using a result of \cite{Tod65} concerning with the homotopy groups of $S^3$.
\begin{thm}\label{p-1/2}
Let $p=\min \mathcal{P}$.
If $p\geq 3$, then $\aut P_{k,\mathcal{P}}$ is trivial as a fibrewise $A_{\frac{p-1}{2}-1}$-space.
Moreover, $\aut P_{k,\mathcal{P}}$ is trivial as a fibrewise $A_{\frac{p-1}{2}}$-space if and only if $k \equiv 0 \mod p$.
\end{thm}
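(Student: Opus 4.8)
The plan is to translate the triviality of $\aut P_{k,\mathcal{P}}$ as a fibrewise $A_m$-space into the existence of the map $f:S^4\times \bm{H}P^m_\mathcal{P}\to\bm{H}P^\infty_\mathcal{P}$ extending $\ell k\vee i$ over the wedge, exactly as in the diagram preceding Proposition \ref{trivialgaugegroup} (with $n$ there replaced by $m$). Such an extension exists if and only if the obstruction to extending over $S^4\times\bm{H}P^m_\mathcal{P}$ from the wedge $S^4\vee\bm{H}P^m_\mathcal{P}$ vanishes, and this single obstruction lives in $H^4(S^4;\pi_3(\bm{H}P^m_\mathcal{P}))\otimes H^{\ast}(\bm{H}P^m_\mathcal{P})$-type groups; more concretely it is detected by a class in $\bigoplus_{j} H^4(S^4)\otimes H^{4j}(\bm{H}P^m_\mathcal{P};\pi_{4j+3}(\bm{H}P^\infty_\mathcal{P}))$, i.e. by the values of the relevant homotopy groups $\pi_{4j+3}(S^3)_\mathcal{P}=\pi_{4j+3}(\bm{H}P^\infty_\mathcal{P})$ for $1\le j\le m$. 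So the first step is to identify precisely which homotopy group of $S^3$ controls the first nonzero obstruction and to show that all lower ones vanish after $\mathcal{P}$-localization.

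Here is where Toda's computation \cite{Tod65} enters: for $p=\min\mathcal{P}\ge 3$ the first $p$-torsion in $\pi_\ast(S^3)$ above the bottom cell occurs in degree $2p$, i.e. $\pi_{2p}(S^3)$ contains the first element of order $p$ (the class $\alpha_1$), and $\pi_i(S^3)_\mathcal{P}=0$ for $4\le i<2p$. The relevant obstruction group for extending over the $j$-th quaternionic cell sits in total degree $4+4j$; since $4+4j=2p$ forces $j=\frac{p-1}{2}-\frac12$, which is not an integer, the first genuinely obstructing cell is $j=\frac{p-1}{2}$, of total degree $4+4\cdot\frac{p-1}{2}=2p+2$. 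Wait — one must be careful: the obstruction to extending over the product of $S^4$ with the $4j$-cell of $\bm{H}P^m_\mathcal{P}$ is a class in $H^{4j+4}(S^4\times\bm{H}P^m_\mathcal{P},S^4\vee\bm{H}P^m_\mathcal{P};\pi_{4j+3}(\bm{H}P^\infty_\mathcal{P}))\cong\pi_{4j+3}(S^3)_\mathcal{P}$, and this group is $0$ for $4j+3<2p$, i.e. for $j\le\frac{p-1}{2}-1=\frac{p-3}{2}$, hence $\bm{H}P^m_\mathcal{P}$ with $m=\frac{p-1}{2}-1$ presents no obstruction and $\aut P_{k,\mathcal{P}}$ is trivial as a fibrewise $A_{\frac{p-1}{2}-1}$-space. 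For $m=\frac{p-1}{2}$ the top cell $j=\frac{p-1}{2}$ gives $4j+3=2p+1$; still $\pi_{2p+1}(S^3)_\mathcal{P}$—hmm, the nontrivial class is in degree $2p$, so I must recompute the indexing: the cell of $\bm{H}P^m$ is $4m$-dimensional, the product cell is $(4m+4)$-dimensional, the attaching-type obstruction lands in $\pi_{4m+3}(\bm{H}P^\infty_\mathcal{P})=\pi_{4m+3}(S^3)_\mathcal{P}$, and $4m+3=2p$ exactly when $m=\frac{p-3}{2}=\frac{p-1}{2}-1$. So in fact the potential obstruction first appears for $m=\frac{p-1}{2}-1$, and its vanishing must be shown to be automatic — this is where $P_{k,\mathcal{P}}$ for all $k$ remains trivial at that level — while for $m=\frac{p-1}{2}$ (cell dimension one higher, obstruction in $\pi_{2p+1}(S^3)$, wait that group is zero) one needs the refined statement.

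Given the indexing subtlety, the cleanest route is: (1) set up the Federer/relative-obstruction spectral sequence (or equivalently analyse $[\,S^4\times\bm{H}P^m_\mathcal{P},\bm{H}P^\infty_\mathcal{P}\,]$ via the skeletal filtration of $\bm{H}P^m_\mathcal{P}$) and reduce triviality to a sequence of primary obstructions $o_j\in H^4(S^4)\otimes H^{4j}(\bm{H}P^m_\mathcal{P})\otimes\pi_{\ast}(S^3)_\mathcal{P}$; (2) invoke \cite{Tod65} to get $\pi_i(S^3)_\mathcal{P}=0$ in the range $3<i<2p$ and $\pi_{2p}(S^3)_\mathcal{P}\cong\mathbf{Z}/p$ generated by $\alpha_1$, so that every $o_j$ with $4+4j<2p+? $ vanishes for degree reasons, giving triviality as a fibrewise $A_{(p-1)/2-1}$-space for every $k$; (3) for the next level, compute the single surviving obstruction explicitly: it is the image of $\alpha_1$ under a Whitehead-product / Samelson-product pairing applied to the class $\ell k\in\pi_4(\bm{H}P^\infty_\mathcal{P})$, hence is proportional to $k$ in $\mathbf{Z}/p$, so it vanishes iff $k\equiv 0\bmod p$. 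The main obstacle is step (3): pinning down that the relevant secondary/primary obstruction equals $k\cdot\alpha_1$ up to a unit. I expect this to follow by naturality from the $k=1$ case together with the observation that the obstruction is additive in $k$ (it is a value of a map that factors through the suspension $\Sigma\colon\pi_4(\bm{H}P^\infty_\mathcal{P})\to$ the obstruction group, which is a homomorphism), combined with the nontriviality of $\alpha_1$ itself; the fact that $\Map(S^4;\bm{H}P^\infty_\mathcal{P};kr)\simeq\Map(S^4;\bm{H}P^\infty_\mathcal{P};k)$ when $r$ is prime to $\mathcal{P}$ (Proposition \ref{primetop}) is consistent with and in fact forces the dependence on $k$ to be only through $k\bmod p$, which is a useful sanity check. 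Finally one reads off: $\aut P_{k,\mathcal{P}}$ trivial as fibrewise $A_{\frac{p-1}{2}}$-space $\iff$ that one obstruction vanishes $\iff$ $k\equiv 0\bmod p$.
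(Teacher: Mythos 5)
Your translation of triviality of $\aut P_{k,\mathcal{P}}$ as a fibrewise $A_m$-space into the existence of the extension $f:S^4\times \bm{H}P^m_\mathcal{P}\to\bm{H}P^\infty_\mathcal{P}$ is exactly the reduction the paper uses. But the obstruction-theoretic bookkeeping that follows is garbled in a way you never resolve, and it hides the genuine gap. The obstruction over the $(4j+4)$-dimensional product cell lies in $\pi_{4j+3}(\bm{H}P^\infty_\mathcal{P})\cong\pi_{4j+2}(S^3)_\mathcal{P}$, not $\pi_{4j+3}(S^3)_\mathcal{P}$; with that fixed, $4j+2<2p$ for $j\leq (p-3)/2$ gives the first half for free, and the first potentially nontrivial obstruction appears at $j=(p-1)/2$, landing in $\pi_{2p}(S^3)_\mathcal{P}\cong\bm{Z}/p$. (Your equation $4m+3=2p$ has no integer solution, which is why you went in circles.) The paper actually sidesteps this homotopy-group bookkeeping for the first half by instead factoring $\nabla(\ell k\vee i)$ through $K(\bm{Z}_\mathcal{P},4)$ and using that $g:\bm{H}P^\infty_\mathcal{P}\to K(\bm{Z}_\mathcal{P},4)$ induces a cohomology isomorphism below degree $2p+2$.

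The real gap is in the ``only if'' direction. You assert the obstruction ``is proportional to $k$ in $\bm{Z}/p$'' and that this ``follows by naturality from the $k=1$ case,'' but naturality and additivity in $k$ only give linearity; they do not tell you that the coefficient of $k$ is a unit mod $p$, which is precisely what you must prove. (If that coefficient happened to vanish mod $p$, the theorem would be false.) The paper detects this coefficient concretely via the Steenrod power: applying $\mathcal{P}^1$ to $f^*c=kx\times 1+1\times c$ in $H^*(S^4\times\bm{H}P^{(p-1)/2}_\mathcal{P};\bm{Z}/p)$ gives $0$ on one hand, while $\mathcal{P}^1c=\pm 2c^{(p+1)/2}$ pulls back to $\pm k(p+1)\,x\times c^{(p-1)/2}$ on the other, forcing $k\equiv 0\bmod p$. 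This detection by $\mathcal{P}^1$ is the missing idea. Likewise, your ``if'' direction is unsubstantiated: the paper handles it by precomposing with a degree-$p$ self-map of $S^4$ and observing that the single remaining obstruction in $\pi_{2p+1}(\bm{H}P^\infty_\mathcal{P})\cong\bm{Z}/p$ dies after multiplication by $p$, an explicit argument that does not rely on already knowing the obstruction equals $k\cdot\alpha_1$.
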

\begin{proof}
Let $x\in H^4(S^4;\bm{Z}_\mathcal{P})$ and $c\in H^4(\bm{H}P^n_\mathcal{P};\bm{Z}_\mathcal{P})$ be generators such that $c|_{S^4}=x$ and $u\in H^4(K(\bm{Z}_\mathcal{P},4);\bm{Z}_\mathcal{P})$ be the fundamental class.
From obstruction theory, we can take maps $f:S^4\times \bm{H}P^n_\mathcal{P}\to K(\bm{Z}_\mathcal{P},4)$ and $g:\bm{H}P^\infty_\mathcal{P}\to K(\bm{Z}_\mathcal{P},4)$ such that $f^*u=kx\times 1+1\times c\in H^4(S^4 \times \bm{H}P^n_\mathcal{P};\bm{Z}_\mathcal{P})$ and $g^*u=c\in H^4(\bm{H}P^\infty_\mathcal{P};\bm{Z}_\mathcal{P})$.
Since $j^*f^*u=(\ell k\vee i)^*\nabla ^*g^*=kx\vee 1+1\vee c\in H^4(S^4\vee \bm{H}P^n_\mathcal{P};\bm{Z}_\mathcal{P})$, the following digram commutes:
\begin{align*}
\xymatrix{
S^4 \vee \bm{H}P^n_\mathcal{P} \ar[r]^-{\ell k\vee i} \ar[d]_{j} & \bm{H}P^\infty_\mathcal{P}\vee \bm{H}P^\infty_\mathcal{P} \ar[d]^{\nabla} \\
S^4 \times \bm{H}P^n_\mathcal{P} \ar[rd]_-{f} & \bm{H}P^\infty_\mathcal{P} \ar[d]^-{g} \\
 & K(\bm{Z}_\mathcal{P},4) 
}
\end{align*}
Since $g^*:H^i(K(\bm{Z}_\mathcal{P},4);\bm{Z}_\mathcal{P})\rightarrow H^i(\bm{H}P^\infty_\mathcal{P};\bm{Z}_\mathcal{P})$ is an isomorphism for $i<2p+2$, if $n\leq (p-1)/2-1$, there exists a map $f':S^4 \times \bm{H}P^n_\mathcal{P}\rightarrow \bm{H}P^\infty_\mathcal{P}$ such that $f'j$ and $\nabla(\ell k\vee i)$ are homotopic.
This implies the first half of this theorem.
\par
Assume that there exists a following homotopy commutative diagram:
\begin{align*}
\xymatrix{
S^4 \vee \bm{H}P^{(p-1)/2}_\mathcal{P} \ar[r]^-{\ell k\vee i} \ar[d]_{j} & \bm{H}P^\infty_\mathcal{P}\vee \bm{H}P^\infty_\mathcal{P} \ar[d]^{\nabla} \\
S^4 \times \bm{H}P^{(p-1)/2}_\mathcal{P} \ar[r]^-{f} & \bm{H}P^\infty_\mathcal{P}
}
\end{align*}
Then $f^*\mathcal{P}^1u\in H^{2p+2}(S^4 \times \bm{H}P^{(p-1)/2}_\mathcal{P};\bm{Z}/p\bm{Z})$ is computed as follows:
\begin{align*}
f^*\mathcal{P}^1c=\mathcal{P}^1(kx\times 1+1\times c)=k\mathcal{P}^1x\times 1+1\times \mathcal{P}^1c=0.
\end{align*}
On the other hand, since $\mathcal{P}^1c=\pm 2c^{(p+1)/2}\in H^{2p+2}(\bm{H}P^\infty_\mathcal{P};\bm{Z}/p\bm{Z})$,
\begin{align*}
f^*\mathcal{P}^1c=\pm 2f^*c^{(p+1)/2}=\pm 2(kx\times 1+1\times c)^{(p+1)/2}=\pm k(p+1)x\times c^{(p-1)/2}.
\end{align*}
Therefore, $k\equiv 0 \mod p$.
\par
Conversely, suppose $k\equiv 0 \mod p$.
Then there is some integer $r\in \bm{Z}$ such that $k=pr$.
From the first half, we can take a map $f':S^4 \times \bm{H}P^{(p-3)/2}_\mathcal{P}\cup *\times \bm{H}P^{(p-1)/2}_\mathcal{P}\rightarrow \bm{H}P^\infty_\mathcal{P}$ such that the following diagram commutes up to homotopy:
\begin{align*}
\xymatrix{
S^4 \vee \bm{H}P^{(p-1)/2}_\mathcal{P} \ar[r]^-{p\vee 1} \ar[d]_{j} & S^4 \vee \bm{H}P^{(p-1)/2}_\mathcal{P} \ar[r]^-{\ell r\vee i} \ar[d]_{j} & \bm{H}P^\infty_\mathcal{P}\vee \bm{H}P^\infty_\mathcal{P} \ar[d]^{\nabla} \\
S^4 \times \bm{H}P^{(p-3)/2}_\mathcal{P}\cup *\times \bm{H}P^{(p-1)/2}_\mathcal{P} \ar[r]^-{p\times 1\cup *\times 1} & S^4 \times \bm{H}P^{(p-3)/2}_\mathcal{P}\cup *\times \bm{H}P^{(p-1)/2}_\mathcal{P} \ar[r]^-{f'} & \bm{H}P^\infty_\mathcal{P}
}
\end{align*}
Now, the obstruction to extending $f'$ over $S^4\times \bm{H}P^{(p-1)/2}_\mathcal{P}$ lives in $\pi_{2p+1}(\bm{H}P^\infty_\mathcal{P})$.
Since $\pi_{2p+1}(\bm{H}P^\infty_\mathcal{P})\simeq \bm{Z}/p\bm{Z}$, there is no obstruction to extending  $f'(p\times 1\cup *\times 1)$ over $S^4\times \bm{H}P^{(p-1)/2}_\mathcal{P}$.
Hence $\aut P_{k,\mathcal{P}}$ is trivial as a fibrewise $A_{\frac{p-1}{2}}$-space.
\end{proof}
\par
Furthermore, for $r\in \bm{Z}$ prime to each element of $\mathcal{P}$, we see that $\aut P_{pr,\mathcal{P}}$ is not trivial as a fibrewise $A_{p-1}$-space.
\par
Let $\zeta$ be the universal $\SU(2)$-bundle over $\bm{H}P^\infty$. Then,
\begin{align*}
K(\bm{H}P^\infty_\mathcal{P})_\mathcal{P}=\bm{Z}_\mathcal{P}[a],
\end{align*}
where $K(\cdot)_\mathcal{P}$ represents the $\mathcal{P}$-local complex $K$-theory and $a=\zeta-2$.
Let $b=-c_2(\zeta)\in H^4(\bm{H}P^\infty_\mathcal{P};\bm{Q})$ then
\begin{align*}
ch\,a=\sum_{j=1}^\infty\frac{2b^j}{(2j)!}.
\end{align*}
Similarly, let $u\in \tilde K(S^4)_\mathcal{P}$ and $ch\,u=s\in H^4(S^4;\bm{Q})$ such that $s=b|_{S^4}$.
Assume that there exists the following homotopy commutative diagram:
\begin{align*}
\xymatrix{
S^4 \vee \bm{H}P^{n}_\mathcal{P} \ar[r]^-{\ell k\vee i} \ar[d]_{j} & \bm{H}P^\infty_\mathcal{P}\vee \bm{H}P^\infty_\mathcal{P} \ar[d]^{\nabla} \\
S^4 \times \bm{H}P^{n}_\mathcal{P} \ar[r]^-{f} & \bm{H}P^\infty_\mathcal{P}
}
\end{align*}
Then, $f^*b=ks\times 1+1\times b$ in $H^4(S^4 \times \bm{H}P^n_\mathcal{P};\bm{Q})$ and
\begin{align*}
f^*a=ku\times 1+1\times a+\sum_{i=1}^n\epsilon _i(k)u\times a^i
\end{align*}
in $\tilde K(S^4 \times \bm{H}P^n_\mathcal{P})_\mathcal{P}$, where $\epsilon _i(k) \in \bm{Z}_\mathcal{P}$.
We calculate $f^*ch\, a$ and $ch\, f^*a$ as follows:
\begin{align*}
f^*ch\,a=f^*\sum_{j=1}^\infty\frac{2b^j}{(2j)!}=\sum_{j=1}^\infty \frac{2}{(2j)!}(ks\times 1+1\times b)^j=ks\times 1+\sum_{j=1}^n \left( \frac{k}{(2j+1)!}s\times b^j+\frac{2}{(2j)!}1\times b^j\right),
\end{align*}
\begin{align*}
ch\, f^*a=ch\,\left( ku\times 1+1\times a+\sum_{i=1}^n\epsilon _i(k)u\times a^i \right)
=&ks\times 1+1\times \sum_{j=1}^n\frac{2}{(2j)!}b^j+\sum_{i=1}^n\sum_{j=1}^n\epsilon _i(k)s\times \left( \sum_{j=1}^n\frac{2}{(2j)!}b^j \right)^i\\
=&ks\times 1+\sum_{j=1}^n\frac{2}{(2j)!}1\times b^j+\sum_{i=1}^n\sum_{l=1}^n\sum_{j_1+\cdots +j_i=l}\frac{2^i\epsilon _i(k)}{(2j_1)!\cdots (2j_i)!}s\times b^l.
\end{align*}
Then we have the following formula:
\begin{align*}
\frac{k}{(2l+1)!}=\sum_{i=1}^n\sum_{\substack{j_1+\cdots +j_i=l\\ j_1, \cdots, j_i\geq 1}}\frac{2^i\epsilon _i(k)}{(2j_1)!\cdots (2j_i)!}.
\end{align*}
From this formula, one can see that there exists the number $\epsilon _i\in \bm{Q}$ such that $\epsilon _i(k)=\epsilon _ik$ for each $i$.
Of course, the sequence $\{\epsilon_i\}_{i=1}^\infty$ satisfy the following formula for each $l$:
\begin{align*}
\frac{1}{(2l+1)!}=\sum_{i=1}^n\sum_{\substack{j_1+\cdots +j_i=l\\ j_1, \cdots, j_i\geq 1}}\frac{2^i\epsilon _i}{(2j_1)!\cdots (2j_i)!}.
\end{align*}
For example, $\epsilon _1=1/6$, $\epsilon _2=-1/180$, $\epsilon _3=1/1512$ etc.
\begin{ex}
If $\aut P_k$ is trivial as a fibrewise $A_3$-space, then $7560$ divides $k$.
\end{ex}
\begin{proof}
This condition implies that $\epsilon_1k, \epsilon_2k, \epsilon_3k \in \bm{Z}$.
Thus $k$ is divided by $7560=2^33^35^17^1$.
\end{proof}
\begin{thm}\label{p-1}
Let $p$ be an odd prime.
The fibrewise topological group $\aut P_{k,\{p\}}$ is trivial as a fibrewise $A_{p-1}$-space if and only if $k \equiv 0\mod p^2$.
\end{thm}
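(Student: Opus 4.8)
The plan is to convert the statement, via the extension criterion recalled just before Proposition~\ref{trivialgaugegroup}, into an obstruction problem for maps into $\bm{H}P^\infty_{\{p\}}$. That criterion says $\aut P_{k,\{p\}}$ is trivial as a fibrewise $A_{p-1}$-space exactly when $\nabla\circ(\ell k\vee i)\colon S^4\vee\bm{H}P^{p-1}_{\{p\}}\to\bm{H}P^\infty_{\{p\}}$ extends over $S^4\times\bm{H}P^{p-1}_{\{p\}}$. The cells of $S^4\times\bm{H}P^{p-1}_{\{p\}}$ not contained in $S^4\vee\bm{H}P^{p-1}_{\{p\}}$ are precisely the product cells $e^4\times e^{4j}$, $1\le j\le p-1$, of dimension $4j+4$, so the obstruction to extending over $e^4\times e^{4j}$ lies in $\pi_{4j+3}(\bm{H}P^\infty_{\{p\}})\cong\pi_{4j+2}(S^3)_{(p)}$ (as $\bm{H}P^\infty=BS^3$); in particular there is a single top cell $e^4\times e^{4(p-1)}$ of dimension $4p$, whose obstruction lies in $\pi_{4p-1}(\bm{H}P^\infty_{\{p\}})\cong\pi_{4p-2}(S^3)_{(p)}$.

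For necessity I would argue from $K$-theory. If such an extension $f$ exists, the $K$-theoretic computation carried out before the theorem forces $\epsilon_i(k)=\epsilon_i k\in\bm{Z}_{(p)}$ for $1\le i\le p-1$; and that same computation produces some index $i\le p-1$ with $v_p(\epsilon_i)\le-2$ (this is exactly the content of the remark that $\aut P_{pr,\{p\}}$ is not trivial as a fibrewise $A_{p-1}$-space when $r$ is prime to $p$). Hence $v_p(k)\ge2$, i.e. $k\equiv0\bmod p^2$.

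For sufficiency, assume $k\equiv0\bmod p^2$ and write $k=pk'$ with $p\mid k'$. Since $p\mid k'$, Theorem~\ref{p-1/2} together with the criterion provides a map extending $\nabla\circ(\ell k'\vee i)$ over $S^4\times\bm{H}P^{(p-1)/2}_{\{p\}}$; combined with the inclusion on $*\times\bm{H}P^{p-1}_{\{p\}}$, this takes care of every cell $e^4\times e^{4j}$ with $j\le(p-1)/2$. For $(p+1)/2\le j\le p-2$ one has $4j+2\equiv2\pmod4$ and $2p<4j+2<4p-2$, and in that range the $p$-primary groups $\pi_{4j+2}(S^3)_{(p)}$ vanish by Toda's computations \cite{Tod65}, so $\nabla\circ(\ell k'\vee i)$ extends to a map $f'$ on $Y:=S^4\times\bm{H}P^{p-2}_{\{p\}}\cup *\times\bm{H}P^{p-1}_{\{p\}}$ — that is, on all of $S^4\times\bm{H}P^{p-1}_{\{p\}}$ except the interior of the single top cell. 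I would then precompose with the self-map $p\times1$ of $S^4\times\bm{H}P^{p-1}_{\{p\}}$ ($p$ denoting a degree-$p$ map of $S^4$, exactly as in the proof of Theorem~\ref{p-1/2}), which restricts to a self-map of $Y$, and set $f'':=f'\circ(p\times1)$. On $S^4\times *$ this becomes $\ell k'\circ p\simeq\ell k$ and on $*\times\bm{H}P^{p-1}_{\{p\}}$ it is unchanged, so $f''$ still extends $\nabla\circ(\ell k\vee i)$ over $Y$; but since $p\times1$ has local degree $p$ on $e^4\times e^{4(p-1)}$, the obstruction to extending $f''$ over the top cell is $p$ times that of $f'$, hence $0$ because $\pi_{4p-2}(S^3)_{(p)}\cong\bm{Z}/p\bm{Z}$ by \cite{Tod65}. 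Therefore $f''$ extends over all of $S^4\times\bm{H}P^{p-1}_{\{p\}}$, and $\aut P_{k,\{p\}}$ is trivial as a fibrewise $A_{p-1}$-space.

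The hard part is the homotopy-group input from \cite{Tod65}: one needs that the $p$-primary homotopy groups of $S^3$ in the degrees occurring here — those $\equiv2\bmod4$ strictly between $2p$ and $4p-2$ — all vanish, while $\pi_{4p-2}(S^3)_{(p)}$ is cyclic of order exactly $p$; it is the latter fact that lets the degree-$p$ trick kill the final obstruction. A subsidiary point to be checked is the naturality bookkeeping for the obstruction cochain under $p\times1$: precomposition with a degree-$p$ self-map of the $S^4$-factor multiplies the top-cell obstruction by $p$ and leaves the restriction to $S^4\vee\bm{H}P^{p-1}_{\{p\}}$ equal to $\nabla\circ(\ell k\vee i)$.
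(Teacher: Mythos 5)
Your sufficiency argument matches the paper's. You factor $k=pk'$ with $p\mid k'$, use Theorem~\ref{p-1/2} to extend $\nabla(\ell k'\vee i)$ over $S^4\times\bm{H}P^{(p-1)/2}_{\{p\}}$, invoke the vanishing of $\pi_r(\bm{H}P^\infty_{\{p\}})$ for $2p+1<r<4p-2$ to push this over $Y=S^4\times\bm{H}P^{p-2}_{\{p\}}\cup *\times\bm{H}P^{p-1}_{\{p\}}$, and then kill the last obstruction in $\pi_{4p-1}(\bm{H}P^\infty_{\{p\}})\simeq\bm{Z}/p\bm{Z}$ by precomposing with $p\times1$. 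You have made the degree-$p$ precomposition explicit where the paper leaves it to be supplied by analogy with the proof of Theorem~\ref{p-1/2}, and the naturality of the obstruction cochain under $p\times1$ is indeed the one point to check; this direction is fine.

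The necessity direction has a genuine gap. The $K$-theory analysis preceding the theorem gives you the recursion
\begin{align*}
\frac{1}{(2l+1)!}=\sum_{i=1}^{l}\sum_{\substack{j_1+\cdots+j_i=l\\ j_1,\ldots,j_i\ge1}}\frac{2^i\epsilon_i}{(2j_1)!\cdots(2j_i)!}
\end{align*}
and the necessary condition $k\epsilon_i\in\bm{Z}_{(p)}$ for $1\le i\le p-1$; that part is right. But the claim that ``that same computation produces some index $i\le p-1$ with $v_p(\epsilon_i)\le-2$'' is not an output of the recursion that can just be read off -- it requires a separate $p$-adic analysis, which is the actual work in the paper's proof: one shows $\epsilon_1,\ldots,\epsilon_{(p-3)/2}\in\bm{Z}_{(p)}$, then that $\epsilon_{(p-1)/2}$ has $p$-adic valuation exactly $-1$, then that $p\epsilon_{(p+1)/2},\ldots,p\epsilon_{p-3}\in\bm{Z}_{(p)}$, and finally isolates the single $p$-nonintegral term in the $l=p-1$ relation to conclude $p\epsilon_{p-1}\notin\bm{Z}_{(p)}$, so $v_p(\epsilon_{p-1})\le-2$. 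Your parenthetical appeal to the earlier remark that $\aut P_{pr,\{p\}}$ is not trivial as a fibrewise $A_{p-1}$-space for $r$ prime to $p$ is circular: that remark is announced in the text but is itself established only by this very theorem, not proved independently beforehand. You need to carry out the valuation estimate on $\epsilon_{p-1}$ directly from the recursion.
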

\begin{proof}
Assume $\aut P_{k,\{p\}}$ is trivial as a fibrewise $A_{p-1}$-space for some $k\in \bm{Z}$.
Then $k\epsilon_{p-1}\in \bm{Z}_{(p)}$.
From this, to show $k\equiv 0 \mod p^2$, it is sufficient to show that $p\epsilon_{p-1}\not \in \bm{Z}_{(p)}$.
Obviously, $\epsilon _1,\cdots ,\epsilon _{(p-3)/2}\in \bm{Z}_{(p)}$.
Then
\begin{align*}
\epsilon _{(p-1)/2}=\frac{1}{p!}\mod \bm{Z}_{(p)}.
\end{align*}
From this, we have $p\epsilon _{(p+1)/2},\cdots ,p\epsilon _{p-3}\in \bm{Z}_{(p)}$.
Hence, by using the above formula for $l=p-1$, we obtain
\begin{align*}
0=\frac{2^{(p-1)/2}p\epsilon _{(p-1)/2}}{(p+1)!(2!)^{(p-3)/2}}\frac{p-1}{2}+p\epsilon _{p-1} \mod \bm{Z}_{(p)}.
\end{align*}
Therefore, we get
\begin{align*}
p\epsilon _{p-1}=-\frac{1}{(p+1)!(p-2)!}\mod \bm{Z}_{(p)}.
\end{align*}
This implies that $p\epsilon_{p-1}\not \in \bm{Z}_{(p)}$.
\par
Conversely, assume $k\equiv 0\mod p^2$.
Let $k'=k/p$.
Since $k'\equiv 0 \mod p$, we can extend $\nabla (\ell k'\vee i):S^4\vee \bm{H}P^{(p-1)/2}_{(p)}\to \bm{H}P^\infty_{(p)}$ over $S^4\times \bm{H}P^{(p-1)/2}_{(p)}$ by using Theorem \ref{p-1/2}.
Moreover, since $\pi_r(\bm{H}P^\infty_{(p)})=0$ for $2p+1<r<4p-2$, we can extend $\nabla (\ell k'\vee i):S^4\vee \bm{H}P^{p-2}_{(p)}\to \bm{H}P^\infty_{(p)}$ over $S^4\times \bm{H}P^{p-2}_{(p)}$.
Thus we can extend $\nabla (\ell k\vee i):S^4\vee \bm{H}P^{p-1}_{(p)}\to \bm{H}P^\infty_{(p)}$ over $S^4\times \bm{H}P^{p-1}_{(p)}$ since $\pi_{4p-1}(\bm{H}P^\infty_{(p)})\simeq \bm{Z}/p\bm{Z}$.
\end{proof}
\par

Now, we give the desired lower bound.
\begin{cor}
The number of the $A_n$-types of gauge groups of principal $\SU(2)$-bundles over $S^4$ is larger than $2^{\pi(2n+1)}$, where $\pi(m)$ represents the number of prime numbers less than or equal to $m$.
\end{cor}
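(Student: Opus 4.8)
The plan is to attach to each $k\in\bm{Z}$ a numerical invariant of the $A_n$-equivalence type of $\mathcal{G}(P_k)$ which by itself already takes more than $2^{\pi(2n+1)}$ distinct values. For an odd prime $p$ with $p\leq 2n+1$ (equivalently $(p-1)/2\leq n$), the invariant will record the divisibility bit $[\,p\mid k\,]\in\{0,1\}$; for the prime $2$ it will record $\gcd(4,k)\in\{1,2,4\}$. These together distinguish $3\cdot 2^{\pi(2n+1)-1}$ possibilities, and $3\cdot 2^{\pi(2n+1)-1}>2^{\pi(2n+1)}$.

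The first and main step is to check that $[\,p\mid k\,]$ really depends only on the $A_n$-equivalence type of $\mathcal{G}(P_k)$, for each odd prime $p\leq 2n+1$. An $A_n$-equivalence $\mathcal{G}(P_k)\to\mathcal{G}(P_{k'})$ carries the identity component onto the identity component and so restricts to an $A_n$-equivalence $\mathcal{G}_0(P_k)\to\mathcal{G}_0(P_{k'})$; by the functoriality of $\{p\}$-localization on $A_n$-maps developed in \S 6 together with the identification of \S 10, these $\{p\}$-localizations are $A_n$-equivalent and coincide with $\mathcal{G}(P_{k,\{p\}})$ and $\mathcal{G}(P_{k',\{p\}})$ (which are connected for odd $p$, so the identity-component caveat is vacuous). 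Since $(p-1)/2\leq n$, one may further restrict the $A_n$-structures to $A_{(p-1)/2}$-structures; now Proposition \ref{trivialgaugegroup} (applied with $\mathcal{P}=\{p\}$ at level $(p-1)/2$) together with Theorem \ref{p-1/2} gives
\[
\mathcal{G}(P_{k,\{p\}})\ \text{is}\ A_{(p-1)/2}\text{-equivalent to}\ \mathcal{G}(S^4\times\SU(2)_{\{p\}})\iff p\mid k .
\]
As $\mathcal{G}(S^4\times\SU(2)_{\{p\}})=\mathcal{G}(P_{0,\{p\}})$, this forces $[\,p\mid k\,]=[\,p\mid k'\,]$. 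For the prime $2$ I would simply observe that an $A_n$-equivalence is in particular a homotopy equivalence, so Kono's theorem \cite{Kon91} (the homotopy type of $\mathcal{G}(P_k)$ is classified by $\gcd(12,k)$) gives $\gcd(12,k)=\gcd(12,k')$, hence $\gcd(4,k)=\gcd(4,k')$.

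It then remains to see that the combined invariant
\[
k\ \longmapsto\ \Big(\,\gcd(4,k),\ \{\,p\ \text{prime}\ :\ 3\le p\le 2n+1,\ p\mid k\,\}\,\Big)
\]
realizes all $3\cdot 2^{\pi(2n+1)-1}$ of its formal values. Given $\delta\in\{1,2,4\}$ and a set $S$ of odd primes in $[3,2n+1]$, the integer $k=\delta\prod_{p\in S}p$ realizes $(\delta,S)$ by the Chinese Remainder Theorem (the cofactor $\prod_{p\in S}p$ being odd forces $\gcd(4,k)=\delta$). Hence the gauge groups $\mathcal{G}(P_k)$ represent at least $3\cdot 2^{\pi(2n+1)-1}>2\cdot 2^{\pi(2n+1)-1}=2^{\pi(2n+1)}$ pairwise distinct $A_n$-equivalence types, which is the assertion.

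The point requiring the most care is the invariance of $[\,p\mid k\,]$ in the second paragraph: it is immediate that this is an invariant of the fibrewise $A_n$-type of $\aut P_k$, but what is needed is that it is an invariant of the $A_n$-type of the section space $\mathcal{G}(P_k)=\varGamma(\aut P_k)$, and this implication is genuinely non-formal --- it is exactly what forces the detour through fibrewise $\{p\}$-localization and the chain of \S 6, \S 10, Proposition \ref{trivialgaugegroup} and Theorem \ref{p-1/2}. A secondary issue is that the strictness of the inequality relies on the factor $3$ supplied by Kono's computation at the prime $2$; one could instead try to squeeze extra states out of Theorem \ref{p-1} (the bits $[\,p^2\mid k\,]$ for odd primes $p\leq n+1$), but this alone fails to beat $2^{\pi(2n+1)}$ for small $n$, so retaining Kono's theorem is the cleanest route.
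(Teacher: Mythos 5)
Your proposal is correct and follows the same strategy as the paper's proof: localize at each odd prime $p\leq 2n+1$, use Theorem \ref{p-1/2} together with Proposition \ref{trivialgaugegroup} to show that the bit $[\,p\mid k\,]$ is an invariant of the $A_n$-type of $\mathcal{G}(P_k)$ (since $(p-1)/2\leq n$), and then appeal to Kono's theorem \cite{Kon91} at the prime $2$. The paper's own proof takes $k=2^{i_2}3^{i_3}\cdots p^{i_p}$ with all exponents in $\{0,1\}$ (where $p$ is the largest prime $\leq 2n+1$) and observes these yield $2^{\pi(p)}=2^{\pi(2n+1)}$ pairwise distinct $A_{(p-1)/2}$-types; as written this only gives the count $\geq 2^{\pi(2n+1)}$, not the strict inequality stated. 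Your refinement at the prime $2$ --- recording $\gcd(4,k)\in\{1,2,4\}$ rather than only the bit $[\,2\mid k\,]$, which is still legitimately extracted from Kono's $\gcd(12,k)$ classification after $[\,3\mid k\,]$ is known --- yields $3\cdot 2^{\pi(2n+1)-1}>2^{\pi(2n+1)}$ and thus actually closes that small gap. Your unpacking of the invariance of $[\,p\mid k\,]$ (passing to identity components, localizing the $A_n$-structure fibrewise as in \S\S 6--7, and noting $\mathcal{G}(P_{k,\{p\}})$ is connected for odd $p$) is exactly the argument the paper leaves implicit behind its citation chain, and you are also right that Proposition \ref{primetop} is not strictly needed once one uses Theorem \ref{p-1/2}, which already treats arbitrary $k$.
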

\begin{proof}
Fix an odd prime $p$ and let $k=2^{i_2}3^{i_3}5^{i_5}\cdots p^{i_p}$ and $k'=2^{i_2'}3^{i'_3}5^{'i_5}\cdots p^{i'_p}$, where each $i_r$ or $i_r'$ equals to $0$ or $1$.
Then, from the result of \cite{Kon91}, Proposition \ref{trivialgaugegroup} , Proposition \ref{primetop} and Theorem \ref{p-1/2}, $\mathcal{G}(P_k)$ and $\mathcal{G}(P_{k'})$ are $A_{\frac{p-1}{2}}$-equivalent if and only if $k=k'$.
Therefore, there is at least $2^{\pi(p)}$ different types of the $A_n$-types of the gauge groups of principal $\SU(2)$-bundles over $S^4$ for $n\geq (p-1)/2$.
\end{proof}
\begin{rem}
Similarly, from Theorem \ref{p-1}, since each two of $\aut P_{0,\{p\}}$, $\aut P_{1,\{p\}}$ and $\aut P_{p,\{p\}}$ are not fibrewise $A_{p-1}$-equivalent, we have a sharper result: the number of the $A_n$-types of the gauge groups of principal $\SU(2)$-bundles over $S^4$ is larger than $2^{\pi(2n+1)-\pi(n+1)}3^{\pi(n+1)}$.
\par
From the prime number theorem, the number of $A_n$-types of the gauge groups of principal $\SU(2)$-bundles over $S^4$ has at least the growth of $2^{(2n+1)/\log(2n+1)}$.
\par
This corollary gives an alternative proof of Proposition \ref{counterexample} for $X=S^4$, but does not give the complete classification of the $A_\infty$-types of the gauge groups.
\end{rem}

   Mitsunobu Tsutaya,
   Department of Mathematics,
   Kyoto University\\
   E-mail address: tsutaya@math.kyoto-u.ac.jp
   
\end{document}